\documentclass[a4paper,11pt]{amsart}
\usepackage{amssymb}                                    
\usepackage{mathrsfs}                    
\usepackage{amsmath}                    
\usepackage{amsfonts}                   
\usepackage{amsthm}                     
\usepackage[latin1]{inputenc}           
\usepackage[arrow, matrix, curve]{xy}    
\usepackage[english]{babel}                
\usepackage{bibgerm}				 
\usepackage{graphics}
\usepackage{bbm}
\usepackage[normalem]{ulem}
\usepackage{xcolor}

\usepackage{hyperref}


 \setlength{\oddsidemargin}{1.0 cm}
 \setlength{\evensidemargin}{1.0 cm}
 \setlength{\textwidth}{14,0 cm}


\theoremstyle{plain}
\newtheorem{thm}{Theorem}[section]
\newtheorem{corollary}[thm]{Corollary}
\newtheorem{proposition}[thm]{Proposition}
\newtheorem{lemma}[thm]{Lemma}
\newtheorem{example}[thm]{Example}
\newtheorem{remark}[thm]{Remark}
\newtheorem{prop}[thm]{Proposition}
\newtheorem{construction}[thm]{Construction}

\theoremstyle{definition}
\newtheorem{definition}[thm]{Definition}

\newcommand{\calS}{\mathcal{S}}
\newcommand{\Sp}{\mathrm{Sp}}
\newcommand{\SP}{\mathrm{Sp}}
\newcommand{\calO}{\mathcal{O}}
\newcommand{\calC}{\mathcal{C}}
\newcommand{\calD}{\mathcal{D}}
\newcommand{\calP}{\mathcal{P}}

\newcommand{\Op}{\mathcal{O}\mathrm{p}_\infty}
\newcommand{\Cat}{\mathcal{C}\mathrm{at}_\infty}
\newcommand{\Hom}{\mathrm{Hom}}
\newcommand{\imap}{\map}

\newcommand{\map}{\mathrm{map}}
\newcommand{\Fin}{\mathrm{Fin}_*}
\newcommand{\act}{\mathrm{act}}
\newcommand{\Add}{\mathrm{Add}}
\newcommand{\Pre}{\mathrm{Pre}}
\newcommand{\N}{\mathrm{N}}
\newcommand{\CMon}{\mathrm{CMon}} 
\newcommand{\CGrp}{\mathrm{CGrp}} 
\newcommand{\CAlg}{\mathrm{Alg}_{\mathbb{E}_\infty}}
\newcommand{\Map}{\mathrm{Map}}
\newcommand{\Fun}{\mathrm{Fun}}
\newcommand{\xto}{\xrightarrow}
\newcommand{\calK}{\mathcal{K}}
\newcommand{\Lex}{\mathrm{Lex}}
\newcommand{\id}{\mathrm{id}}
\newcommand{\pt}{\mathrm{pt}}
\newcommand{\Alg}{\mathrm{Alg}}
\newcommand{\rep}{\mathrm{rep}}
\newcommand{\lax}{\mathrm{lax}}
\newcommand{\St}{\mathrm{St}}
\newcommand{\fin}{\mathrm{fin}}

\newcommand{\psf}{\calS^\fin_*}
\newcommand{\Ind}{\mathrm{Ind}}
\newcommand{\Mul}{\mathrm{Mul}}
\newcommand{\op}{\mathrm{op}}

\DeclareMathOperator{\colim}{colim}

\newenvironment{psmallmatrix}
  {\left(\begin{smallmatrix}}
  {\end{smallmatrix}\right)}

\begin{document}

\title{Stable $\infty$-Operads and the multiplicative Yoneda Lemma}
\author{Thomas Nikolaus }
\date{\today}

\begin{abstract}
We construct for every $\infty$-operad  $\calO^\otimes$ with certain finite limits new $\infty$-operads 
of spectrum objects and 
of commutative group objects in $\calO$. We show that  these are the universal stable resp. additive $\infty$-operads obtained from $\calO^\otimes$. We deduce that for a stably (resp. additively) symmetric monoidal $\infty$-category $\calC$ the Yoneda embedding factors through the $\infty$-category of exact, contravariant functors from $\calC$ to the $\infty$-category of spectra (resp. connective spectra) and admits a certain multiplicative refinement. 
As an application we prove that the identity functor $\Sp \to \Sp$ is initial among exact, lax symmetric monoidal endofunctors of the symmetric monoidal $\infty$-category $\Sp$ of spectra with smash product.
\end{abstract}

\maketitle


\section{Introduction}

Let $\calC$ be an $\infty$-category that admits finite limits. Then there is a new $\infty$-category 
$\Sp(\calC)$ of spectrum objects in $\calC$ that comes with a functor $\Omega^\infty: \Sp(\calC) \to \calC$.
It is shown in \cite[Corollary 1.4.2.23]{HA}  that this functors exhibits $\Sp(\calC)$ as the universal stable $\infty$-category obtained from $\calC$. This means that for every stable $\infty$-category $\calD$, post-composition with the functor $\Omega^\infty$ induces an equivalence
$$
 \Fun^{\Lex}\big(\calD, \Sp(\calC)\big) \to \Fun^{\Lex}\big(\calD, \calC\big),
$$
where $\Fun^{\Lex}$ denotes the $\infty$-category of finite limits preserving (a.k.a. left exact) functors.

The question that we want to address in this paper is the following. Suppose  $\calC$ admits a symmetric monoidal structure. Does  $\Sp(\calC)$ then also inherits some sort of 
 symmetric monoidal structure which satisfies a similar universal property in the world of symmetric monoidal $\infty$-categories? This is a question of high practical importance in applications in particular for the construction of algebra structures on mapping spectra and the answer that we give will be applied in future work by the author. \\
 
 Let us formulate the question more precisely. 
Assume that $\calC$ is a presentable and symmetric monoidal $\infty$-category such that the tensor bifunctor $\otimes: \calC \times \calC \to \calC$ preserves colimits in both variables separately. Then as a consequence of the results in \cite[Section 4.8.2] {HA} and \cite{GGN}  the $\infty$-category $\Sp(\calC)$ inherits a canonical closed symmetric monoidal structure  with respect to which the functor $\Omega^\infty$ becomes lax symmetric monoidal. The question is then whether for a stably symmetric monoidal $\infty$-category $\calD$ (i.e. $\calD$ is stable and the tensor bifunctor is exact in both variables seperately) the induced functor 
  \begin{equation}\label{one}
 \Fun^{\Lex}_\lax\big(\calD, \Sp(\calC) \big) \to \Fun^{\Lex}_\lax\big(\calD, \calC\big)
\end{equation}
 is an equivalence. Here $\Fun^\Lex_\lax$ denotes the $\infty$-category consisting of those lax symmetric monoidal functors whose underlying functor preserves finite limits.
 It is a consequence of the results of this paper that the functor \eqref{one} is indeed an equivalence of $\infty$-categories. 
 In fact our results are far more general. 
 \begin{itemize}
 \item We introduce the concept of stability for $\infty$-operads $\calD$ (a.k.a. symmetric multicategories) generalizing the concept of stably symmetric monoidal $\infty$-categories (Definition \ref{defstable}). For every such stable $\infty$-operad $\calD$ the appropriate functor 
 generalizing \eqref{one} is shown to be an equivalence (Corollary \ref{corimo}). This generalization makes sense since lax symmetric monoidal functors between symmetric monoidal $\infty$-categories are the same as maps between the underlying $\infty$-operads.
 \item
 We prove that if $\calC$ is itself only an $\infty$-operad (which admits certain finite limits) then $\Sp(\calC)$ also inherits the structure of an $\infty$-operad which is stable (Theorem \ref{proppointad}). 
 This generalizes the symmetric monoidal structure on $\Sp(\calC)$ above (being symmetric monoidal is merely a \emph{property} of an $\infty$-operad).  
 Even if $\calC$ is symmetric monoidal, but not presentable, then it can happen that $\Sp(\calC)$ is only an $\infty$-operad and not symmetric monoidal. Again the generalisation of the  functor \eqref{one} is shown to be an equivalence (Corollary \ref{corimo}). 
 \item
 Conceptually the aforementioned results can be summarised by saying that  $\Sp(\calC)$ forms the cofree stable $\infty$-operad obtained from $\calC$. Following the pattern developed in \cite{GGN} we will generalize this to pointed and (pre)addi\-tive situations. More precisely we construct for every $\infty$-operad $\calC$ with sufficient limits new $\infty$-operads $\calC_*$ of pointed objects, 
 $\CMon(\calC)$ of commutative monoid objects and $\CGrp(\calC)$ of commutative group objects in  in $\calC$. We show that these are the cofree pointed, preadditive resp. additive  $\infty$-operads obtained from $\calC$ (see Section \ref{secbla}). 
 \end{itemize}
As a consequence of these abstract structural result we deduce that for every stably symmetric monoidal $\infty$-category $\calC$ 
the functor which sends an object $x \in \calC$ to the mapping spectrum $\map(\mathbbm{1},x)$
admits a lax symmetric monoidal structure. This implies that for every highly structured algebra $a$ in $\calC$ the mapping spectrum $\map(\mathbbm{1}, a)$ becomes a highly structured ring spectrum as well. Moreover the functor $\map(\mathbbm{1},-)$ is initial among all exact, lax symmetric monoidal functors from $\calC$ to the $\infty$-category of spectra (Corollary \ref{cor}). An immediate consequence is that the identity functor $\Sp \to \Sp$ is initial among exact, lax symmetric monoidal endofunctors of $\Sp$. This is, as tautological as it sounds, an important structural result for the symmetric monoidal $\infty$-category $\Sp$ which is not true, for example, for the $\infty$-category of chain complexes.

\subsection*{Relation to other work}
Lurie constructs in \cite[Section 6.2.4]{HA} a stabilization for a more restrictive subclass of $\infty$-operads than we do. His stabilization is equivalent to ours  and satisfies the same universal property but is closer to being symmetric monoidal (more precisely corepresentable). He however uses much more elaborate tools than we do, namely Goodwillie derivatives. We are not aware that a similar construction can be done in the (pre)additive setting, see also Remarks \ref{remarkde}  and \ref{Lurie}. 

In the paper \cite{GGN} it is shown that in the presentable setting the category $\Sp(\calC)$ satisfies the universal property that left adjoint, symmetric monoidal functors out of it are the same as functors of the same kind out of $\calC$. By passing to right adjoint functors this is equivalent to a universal property that is weaker but similar to the one here. However it only works in the presentable world. 
In \cite{MR3209352} Blumberg, Gepner and Tabuada take a similar presentable approach  to deduce that the lax symmetric monoidal functor induced by the tensor unit satisfies a certain universal property. We consider it to be one of the main contributions here that we do not need any presentability hypothesis and obtain even stronger forms of these statements.
Finally we make in this paper essential use of the Day convolution structures for $\infty$-categories as introduced by Glasman \cite{Glas} and further developed with Barwick \cite{2015arXiv150503098B}. We review and extend some of their results in Section \ref{day}.

\subsection*{Acknowledgements} This paper is part of a joint project with Markus Land. I want to thank him for several discussions and for helpful comments  on an early draft. I also thank David Gepner for comments on a draft.

\section{Operads and operadic limits}

We will freely use the language of $\infty$-categories as developed by Joyal, Lurie and others. Our main sources are the books by Lurie \cite{HTT,HA}. We will mostly follow the notation and terminology there. In particular 
for an $\infty$-category $\calC$ and objects $a,b \in \calC$ we denote the mapping space by $\Map_{\calC}(a,b)$ or only $\Map(a,b)$ if the ambient category $\calC$ is clear from the context.
The mapping space is considered as an object in the $\infty$-category $\calS$ of spaces. Several times in this paper, we will encounter the situation that for an $\infty$-category $\calC$ and objects $a,b \in \calC$ there is an `enriched' mapping object, which lies 
in another $\infty$-category $\calD$ with a functor $\calD \to \calS$ and which refines $\Map(a,b)$. In such a situation we will denote the enriched mapping object by $\map_{\calD}(a,b)$. 

We say that an inclusion of simplicial sets $\calC \subseteq \calD$ is a full simplicial subset if the $n$-simplices of $\calC$ are precisely
those $n$-simplices of $\calD$ for which all vertices lie in $\calC$. In particular a full simplicial subset can be specified by specifying a subset of the vertices.
The most important case is that $\calD$ is an $\infty$-category in which case it follows that every full simplicial subset is also an $\infty$-category and we say that it is a full subcategory.
For two $\infty$-categories $\calC$ and $\calD$ we denote by $\Fun(\calC, \calD)$ the functor 
category which is the simplicial set of maps from $\calC$ to $\calD$. We will add a superscript to indicate that we restrict to the full subcategory of functors with certain properties, for example
$\Fun^\Pi(\calC,\calD) \subseteq \Fun(\calC,\calD)$ for the product preserving functors and $\Fun^{\Lex}(\calC,\calD)  \subseteq \Fun(\calC,\calD)$ for the finite limit preserving (a.k.a. left exact) functors.  \\

Let $\Op$ denote the $\infty$-category of $\infty$-operads in the sense of \cite[Chapter 2]{HA}. These are really the $\infty$-categorical analogue of coloured, symmetric operads (a.k.a. symmetric multicategories)
but we will follow Lurie and simply call  them $\infty$-operads to avoid awkward language.
To be precise an $\infty$-operad is an inner fibration $\calO^\otimes \to \N\Fin$ that admits coCartesian lifts over inert morphisms in $\N\Fin$ such that the induced maps
$$
(\rho^1_!,...,\rho^n_!): \calO^\otimes_{\langle n \rangle} \to \calO^\otimes_{\langle 1 \rangle}  \times ... \times \calO^\otimes_{\langle 1 \rangle}
$$
are equivalences where $\rho^i: \langle n \rangle \to \langle 1 \rangle$ is the inert morphism in $\Fin$ with $\rho^i(i) = 1$ and $\rho^i(k) = 0$ for $k \neq i$. 
Here we denote the fibre of the fibration $\calO^\otimes \to \N\Fin$ over  the object 
$\langle n \rangle = \{0,...,n\} \in \N\Fin$ by $\calO^\otimes_{\langle n \rangle}$. 

For an $\infty$-operad $\calO^\otimes$ we will denote the underlying $\infty$-category 
as $\calO := \calO^\otimes_{\langle 1 \rangle}$.
By the usual abuse of notation we will usually only write $\calO^\otimes$ for the $\infty$-operad and leave the morphism $\calO^\otimes \to \N\Fin$ implicit. A map between $\infty$-operads $\calO^\otimes$ and ${\calO'}^\otimes$ is a functor $\calO^\otimes \to {\calO'}^\otimes$ over $\N\Fin$ which sends coCartesian lifts of inert morphisms to coCartesian lifts. 
For every pair of operads there is the $\infty$-category of operad maps $\calO \to \calO'$ which is denoted by $\Alg_{\calO}(\calO')$.

\begin{remark}
The notation $\Alg_{\calO}(\calO')$ comes from the fact that if $\calO^\otimes$ is a monochromatic (i.e. non-coloured) operad like $\mathbb{E}_n^\otimes$ for $0 \leq n \leq \infty$ then these $\infty$-categories really form the $\infty$-categorical analogue of algebras over classical operads.
 \end{remark}
 \begin{remark}
A symmetric monoidal $\infty$-category is a special case of an $\infty$-operad: it is an $\infty$-operad $\calC^\otimes$ such that the map $\calC^\otimes \to \N\Fin$ is coCartesian. 
If $\calC^\otimes$ and $\calC'^\otimes$ are symmetric monoidal $\infty$-categories then an operad map $\calC^\otimes \to \calC'^\otimes$
is the $\infty$-categorical analogue of a lax symmetric monoidal functor. It is potentially  misleading (thought mathematically consistent) to call such a functor a $\calC$-algebra in $\calC'$. In this situation we will denote the $\infty$-category $\Alg_{\calC}(\calC')$ also by 
$\Fun_\lax(\calC, \calC')$ and refer to it as lax symmetric monoidal functors. Such a lax symmetric monoidal functor $\calC^\otimes \to \calC'^\otimes$ is symmetric monoidal if it carries all coCartesian lifts to coCartesian lifts.
\end{remark}

Recall that a morphism in $\Fin$ is called active if only the basepoint maps to the basepoint, i.e. if it is obtained from a morphism of finite sets by adding a disjoint basepoint. For an operad $\mathcal{O}^\otimes$ we denote by 
$\calO^\otimes_\act$ the pullback of $\calO^\otimes$ along the inclusion $\N\Fin^\act \to \N\Fin$. In particular $\calO^\otimes_\act$ is an $\infty$-category and there is a fully faithful inclusion $i: \calO \subseteq \calO^\otimes_\act$ of $\infty$-categories.

\begin{definition}\label{deflimits}
Let $\calO^\otimes$ be an $\infty$-operad and $p: K^\lhd \to \calO$ be a cone in the underling $\infty$-category. We say that $p$ is an operadic limit if 
the induced cone $ip: K^\lhd \to \calO^\otimes_{\act}$ is a limit cone in the $\infty$-category $\calO^\otimes_\act$. 

Let $\calK$ be a collection of small simplicial sets. We say that an $\infty$-operad $\calO^\otimes$ admits all $\calK$-indexed operadic limits if for each functor $K \to \calO$ with $K \in \calK$ 
there exists an extension to an operadic limit cone.
By $\Op^\calK$ we denote the subcategory of $\Op$ consisting of $\infty$-operads that admit $\calK$-indexed operadic limits and operad maps that preserve $\calK$-index operadic limits.
Similary for $\calO^\otimes, \calO'^\otimes \in \Op^\calK$ we will denote by $\Alg^{\calK}_{\calO}(\calO') \subseteq \Alg_{\calO}(\calO')$ the full subcategory consisting of maps that preserve $\calK$-indexed operadic limits.

In the case $\calK = \{\emptyset\}$ we say that the operad admits a terminal object and write $\Op^{*}$ resp. $\Alg^*_{\calO}(\calO')$. 
Similarly for $\calK$ consisting of all finite discrete simplicial sets (including the empty set) we say that 
$\calO^\otimes$ admits operadic products and write $\Op^\Pi$ resp. $\Alg^{\Pi}_{\calO}(\calO')$ . Finally for $\calK$ consisting of all finite simplicial sets we say that $\calO^\otimes$ admits all finite operadic limits 
and write $\Op^{\Lex}$ resp. $\Alg^{\Lex}_{\calO}(\calO')$. 
\end{definition}
\begin{remark}
One can spell out the definition of an operadic limit more explicitly using the multi-mapping-spaces in an $\infty$-operad. For a sequence of objects $z_1,\ldots,z_n$ and $x$ in $\calO$ we will denote these 
multi-mapping-spaces  by $\Mul_{\calO}(z_1,\ldots ,z_n; x)$. Concretely these are the mapping spaces 
$$
\Mul_{\calO}(z_1,\ldots,z_n; x) = \Map_{\calO^\otimes_{act}}(z_1 \boxtimes \ldots \boxtimes z_n; x)
$$
where we consider the list $(z_1, \ldots, z_n)$ as an object denoted $z_1 \boxtimes \ldots \boxtimes z_n$ in $\calO^\otimes_\act$. For example the mapping spaces $\Mul_{\calO}(z,x)$in the $\infty$-operad $\calO^\otimes$ agrees with the mapping space $\Map_{\calO}(z,x)$ in the underlying $\infty$-category. 

Then a cone $p: K^{\lhd} \to \calO$ is an operadic limit if and only if for every sequence of objects $z_1, \ldots ,z_n$ 
in $\calO$ the induced cone
$$
\Mul_{\calO}(z_1, \ldots, z_n; p): K^\lhd \to \calS
$$
is a limit in the $\infty$-category $\calS$ of spaces. In particular an operadic limit is also a limit in the underlying $\infty$-category $\calO$. 
\end{remark}
\begin{example}\label{exym}
Recall that a symmetric monoidal $\infty$-category is by definition an  $\infty$-operad such that the map $\calC^\otimes \to \N \Fin$ is coCartesian. Then a cone $\calK^\lhd\to \calC$ is an operadic limit if and only if 
it is a limit in the underlying $\infty$-category $\calC$. This can be easily seen using the description in terms of multi-mapping spaces and the fact that in this case $\Mul_{\calC}(z_1, \ldots ,z_n; -)$ is corepresentable 
by $z_1 \otimes ....\otimes z_n$. In particular $\calC^\otimes$ admits all $\calK$-indexed operadic limits precisely if the underlying $\infty$-category $\calC$ admits all $\calK$-indexed limits.  
\end{example}

\begin{example}
It is not true in general that an $\infty$-operad admits $\calK$-shaped limits if the underlying $\infty$-category does. Consider for example the $\infty$-category $\Delta^0$ considered as an $\infty$-operad without higher operations. Concretly the $\infty$-operad is given by $\N\Fin^{\mathrm{inert}} \to \N\Fin$ where $\Fin^\mathrm{inert} \subseteq \Fin$ is the subcategory consisting of all finite pointed sets and  inert morphism. This $\infty$-operad is also called the trivial operad. Then the unique object is terminal in the underlying $\infty$-category but not operadically terminal.
\end{example}

We now want to give a `converse' to Example \ref{exym}. 
We say that a map of $\infty$-operads $\calO^\otimes$ and ${\calO'}^\otimes$ 
is fully faithful if it is fully faithful as a functor $\calO^\otimes \to {\calO'}^\otimes$ between $\infty$-categories. Equivalently if for every sequence of objects $z_1,...,z_n,x$ in 
$\calO$ the induced morphism 
$$
\Mul_{\calO}(z_1,\ldots ,z_n; x) \to \Mul_{{\calO'}}\big(F(z_1),\ldots ,F(z_n); F(x) \big)
$$
is a homotopy equivalence. For example a symmetric monoidal functor $F: \calC^\otimes \to \calC'^\otimes$ between symmetric monoidal $\infty$-categories $\calC^\otimes$ and $\calC'^\otimes$ is fully faithful as a map of $\infty$-operads precisely 
if the underlying functor $F_{\langle 1 \rangle}: \calC \to \calC'$ between $\infty$-categories is fully faithful.

A large symmetric monoidal $\infty$-category $\calC^\otimes$ is called presentably symmetric mon\-oi\-dal if the underlying $\infty$-category $\calC$ is presentable and the tensor bifunctor 
$\otimes: \calC \times \calC \to \calC$ preserves colimits separately in each variable. 

\begin{proposition}\label{limits}
For a small $\infty$-operad $\calO^\otimes$ and a collection of finite simplicial sets $\calK$ the following are equivalent
\begin{enumerate}
 \item $\calO^\otimes$ admits operadic limits of shape $\calK$
 \item $\calO^ \otimes$ admits a fully faithful inclusion $i: \calO^\otimes \to \calC^\otimes$ where $\calC^\otimes$ is presentably symmetric monoidal 
and such that the essential image of $i(\calO) \subseteq \calC $ is closed under $\calK$-indexed limits.
 \item $\calO^ \otimes$ admits a fully faithful inclusion $i: \calO^\otimes \to \calC^\otimes$ where $\calC^\otimes$ is small, symmetric monoidal, $\calC$ admits  
$\calK$-index limits and the essential image  $i(\calO) \subseteq \calC$ is closed under $\calK$-indexed limits.
\end{enumerate}
\end{proposition}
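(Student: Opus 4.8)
The plan is to prove the cycle of implications $(1) \Rightarrow (2) \Rightarrow (3) \Rightarrow (1)$, where the first implication carries the main content and the remaining two are, respectively, a smallness argument and a transfer argument along the fully faithful inclusion.

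For $(1) \Rightarrow (2)$ I would first pass to the symmetric monoidal envelope $\mathrm{Env}(\calO)^\otimes$, the small symmetric monoidal $\infty$-category whose underlying $\infty$-category is $\calO^\otimes_\act$ and for which the canonical map $i_{\mathrm{Env}}\colon \calO^\otimes \to \mathrm{Env}(\calO)^\otimes$ is a fully faithful map of $\infty$-operads, sending $x$ to the length-one word and realizing $\Mul_\calO(z_1,\ldots,z_n;x) \simeq \Map_{\calO^\otimes_\act}(z_1\boxtimes\cdots\boxtimes z_n; x)$. By Definition \ref{deflimits} a cone $p\colon K^\lhd \to \calO$ is an operadic limit precisely when $i_{\mathrm{Env}}p$ is a limit cone in $\calO^\otimes_\act = \mathrm{Env}(\calO)$; thus (1) says that every $\calK$-indexed diagram in $\calO \subseteq \mathrm{Env}(\calO)$ has a limit in $\mathrm{Env}(\calO)$ whose apex lies again in $\calO$. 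Now I would compose with the Day convolution Yoneda embedding $j\colon \mathrm{Env}(\calO) \to \calP(\mathrm{Env}(\calO))$ of Section \ref{day}: since $\mathrm{Env}(\calO)^\otimes$ is small symmetric monoidal, $\calP(\mathrm{Env}(\calO)) = \Fun(\mathrm{Env}(\calO)^\op, \calS)$ carries a presentably symmetric monoidal Day convolution structure for which $j$ is symmetric monoidal and fully faithful. Setting $\calC^\otimes := \calP(\mathrm{Env}(\calO))^\otimes$ and $i := j\circ i_{\mathrm{Env}}$, the map $i$ is a composite of fully faithful operad maps, hence a fully faithful map of operads into a presentably symmetric monoidal $\infty$-category. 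The crucial point is that $i(\calO)$ is then automatically closed under $\calK$-indexed limits: the Yoneda embedding $j$ preserves all limits that exist in its source, so a $\calK$-diagram in $i(\calO)$ — corresponding by full faithfulness to a diagram $K \to \calO$ with operadic limit $x$, that is an honest limit $i_{\mathrm{Env}}(x)$ in $\mathrm{Env}(\calO)$ — has limit $j(i_{\mathrm{Env}}(x)) = i(x) \in i(\calO)$ in $\calP(\mathrm{Env}(\calO))$. This establishes (2).

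For $(2) \Rightarrow (3)$ I would cut $\calC$ down to size. Fix a regular cardinal $\kappa$ such that $\calO$ and every $K \in \calK$ are $\kappa$-small, and let $\calC_0 \subseteq \calC$ be the smallest full subcategory containing $i(\calO)$ and closed under the tensor product, the unit, and $\calK$-indexed limits. Realizing $\calC_0$ as the union of a $\kappa$-indexed chain of closure steps shows it is essentially small, and since it is stable under $\otimes$ and contains the unit it inherits a symmetric monoidal structure $\calC_0^\otimes \subseteq \calC^\otimes$ with symmetric monoidal inclusion. Because $\calC_0$ is closed under $\calK$-limits, these agree with the $\calK$-limits of $\calC$; hence $\calC_0$ admits $\calK$-limits and $i(\calO)$, being $\calK$-limit closed in $\calC$, stays $\calK$-limit closed in $\calC_0$. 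The corestriction $\calO^\otimes \to \calC_0^\otimes$ remains fully faithful, giving (3).

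Finally $(3) \Rightarrow (1)$ is a transfer along $i$ using Example \ref{exym}. Given $K \to \calO$ with $K \in \calK$, its image has a limit in $\calC$ which by hypothesis lies in $i(\calO)$, and full faithfulness of $i$ lifts the resulting limit cone uniquely to a cone $p\colon K^\lhd \to \calO$. By Example \ref{exym} ordinary and operadic limits coincide in the symmetric monoidal $\calC^\otimes$, so for all $z_1,\ldots,z_n$ the cone $\Mul_\calC(iz_1,\ldots,iz_n; ip)$ is a limit in $\calS$; full faithfulness identifies it with $\Mul_\calO(z_1,\ldots,z_n; p)$, which is therefore a limit cone, and the multi-mapping-space criterion then shows $p$ is an operadic limit. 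I expect the main obstacle to lie in $(2) \Rightarrow (3)$, namely in simultaneously controlling cardinality and symmetric monoidality: one must check that closing $i(\calO)$ under both $\otimes$ and $\calK$-indexed limits stays within a fixed regular cardinal and genuinely yields a full symmetric monoidal subcategory. The conceptual heart, by contrast, is the clean identification in $(1) \Rightarrow (2)$ of operadic $\calK$-limits with $\calK$-limits in the envelope $\calO^\otimes_\act$, which the limit-preserving Yoneda embedding converts into honest, image-closed limits.
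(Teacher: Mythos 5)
Your proposal is correct and takes essentially the same route as the paper: $(1)\Rightarrow(2)$ via the monoidal envelope $\calO^\otimes_\act$ and the limit-preserving symmetric monoidal Yoneda embedding into presheaves, and $(3)\Rightarrow(1)$ by transferring the limit back along the fully faithful inclusion using that operadic and ordinary limits agree in a symmetric monoidal $\infty$-category. The only cosmetic difference is in $(2)\Rightarrow(3)$, where the paper simply passes to the $\kappa$-compact objects $\calC^\kappa$ for a sufficiently large regular cardinal $\kappa$ (closed under finite limits and tensor products) rather than forming your transfinite closure of $i(\calO)$; both arguments work.
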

\begin{proof}
Recall that for an $\infty$-operad we have the inclusion $i: \calO \subseteq \calO^\otimes_\act$. Moreover $\calO^\otimes_\act$ admits a symmetric monoidal structure whose associated tensor bifunctor will be denoted by
$\boxtimes: \calO^\otimes_\act \times \calO^\otimes_\act \to \calO^\otimes_\act$. Note that this functor is denoted as $\oplus$ in \cite[Remark 2.2.4.6]{HA}. But since this could lead to confusion with biproducts in the additive situation we prefer to use $\boxtimes$.

To show $(1) \Rightarrow (2)$ assume that $\calO^\otimes$ admits $\calK$-indexed operadic limits. It is shown in \cite[Section 2.2.4]{HA} that the fully faithful inclusion $\calO\to \calO^\otimes_{\act}$ of $\infty$-categories admits an extension to a
fully faithful map of $\infty$-operads 
$\calO^\otimes \to (\calO^\otimes_{\act})^\boxtimes$. 
Using \cite[Corollary 4.8.1.12.]{HA} we can equip the $\infty$-category $\calP(\calO^\otimes_\act) = \Fun\left((\calO^\otimes_\act)^{op}, \calS\right)$ of space valued presheaves on $\calO^\otimes_\act$ with a symmetric monoidal structure
such that
the Yoneda embedding $\calO^\otimes_\act \to \calP(\calO^\otimes_\act)$ admits a symmetric monoidal refinement. This structure is in fact equivalent to the Day convolution that we will discuss in Section \ref{day} but we will not need this equivalence here.

Then by the Yoneda Lemma the induced map of $\infty$-operads 
$$(\calO^\otimes_{\act})^\boxtimes \to \calP(\calO^\otimes_\act)^\otimes $$
is  fully faithful. But by construction $\calP(\calO^\otimes_\act)^\otimes$ is presentably symmetric monoidal and the composition
$
\calO^\otimes \to \calP(\calO^\otimes_\act)^\otimes
$
is  fully faithful. Also the underlying functor $\calO\to \calP(\calO^\otimes_\act)$ preserves $\calK$-indexed operadic limits since the map $\calO^\otimes \to \calO^\otimes_\act$ does so by definition 
of operadic limits and the Yoneda embedding preserves all limits. Thus it also preserves all operadic limits (which exist in the target).

For $(2) \Rightarrow (3)$ we use that the inclusion $\calO \to \calC$ factors through the $\kappa$-compact objects $\calC^\kappa$ for some cardinal $\kappa$ since 
$\calO$ is small.  Enlarging $\kappa$ if necessary we can assume that $\calC^\kappa$ is closed under all finite limits by \cite[5.4.7.4]{HTT}. 
Enlarging $\kappa$ further we can also  assume that $\calC^\kappa$ is closed under tensor products. This then shows the claim.

For $(3) \Rightarrow (1)$ assume the existence of an embedding $j: \calO^\otimes \to \calC^\otimes$ as stated in $(3)$. Then for every functor $K \to \calO^\otimes$ the limit  again lies in the essential image of 
$j_{\langle 1 \rangle}$. Since it is an operadic limit in $\calC^\otimes$ it follows from fully faithfulness that it is also an operadic limit in $\calO^\otimes.$
\end{proof}
\begin{remark}\label{remarksize}
We assume implicitly throughout that paper that our $\infty$-operads $\calO^\otimes$ are small. The presentably symmetric monoidal category $\calC^\otimes$ that occurred in the last proposition however is necessarily large (i.e. the simplicial set $\calC^\otimes$ is a simplicial set in the next universe).  But in practice the $\infty$-operads $\calO^\otimes$ that we care about are often already large. In that case the symmetric monoidal category $\calC^\otimes$ will be very large i.e. lies in the next universe after large. Everything then works logically exactly the same as now (in fact large sets also satisfy the ZFC axioms). Thus for simplicity of language we will keep using the implicit assumption that all $\infty$-operads $\calO^\otimes$ are small unless otherwise stated but note that all results also hold for $\infty$-operads in a larger universe with the changes that those which are now large have to lie in the successor universe. 
\end{remark}

\begin{definition}
An operadic adjunction between $\infty$-operads $p\colon \calO^\otimes \to  \N\Fin$ and $p'\colon \calO'^\otimes \to \N\Fin$ consists of operad maps $F: \calO^\otimes \to {\calO'}^\otimes$ and $G: \calO'^\otimes \to \calO^\otimes$ 
together with a  transformation 
$\alpha: \id_{\calO^\otimes} \to GF$
that exhibits $G$ as a right adjoint to $F$ and such that $p(\alpha)$ is the identity transformation from $p$ to itself. In this case we say that $F$ is operadically left adjoint to $G$ and $G$ is operadically right adjoint to $F$.
\end{definition}

Clearly the operadic right adjoint is, if it exists, essentially uniquely determined by the left adjoint and vice versa. 

\begin{remark}
Note that this notion is a special case of the notion of relative adjunctions discussed by Lurie in \cite[Section 7.3.2]{HA}.  One can translate the concept of an operadic adjunction into the language of multi-mapping-spaces.
Then an operadic adjunction consists of two operad  maps $F$ and $G$ together with a transformation $\alpha: \id \to GF$ such that for every sequence of objects $x_1,...,x_n \in \calO$ and every object $y \in \calO'$ the 
induced transformation 
$$
\Mul_{\calO'}\big(F(x_1),\ldots,F(x_n); y\big)  \xto{\alpha^* \circ G_{(x_1,\ldots,x_n; y)}}  \Mul_{\calO}\big(x_1,\ldots, x_n; G(y)\big) 
$$ 
is a homotopy equivalence of spaces. 
\end{remark}

The next lemma in particular shows that an operadic adjunction always has an underlying ordinary adjunction. In other words: the underlying functor of the operadic right adjoint is the right adjoint of the underlying functor.
\begin{lemma}
For an operadic adjunction we obtain by restriction induced adjunctions
$$
F_{\langle 1 \rangle} : \calO \leftrightarrows \calO': G_{\langle 1 \rangle}
\qquad \text{and} \qquad 
F_{\act} : \calO^\otimes_\act \leftrightarrows \calO'^\otimes_\act: G_{\act}
$$ 

\end{lemma}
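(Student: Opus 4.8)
The plan is to work entirely with mapping spaces and to exploit the one feature that distinguishes an operadic adjunction from an ordinary one: the unit $\alpha$ projects to the identity transformation, $p(\alpha)=\id_p$. Write $p\colon \calO^\otimes \to \N\Fin$ and $p'\colon {\calO'}^\otimes \to \N\Fin$ for the structure maps; since $F$ and $G$ are maps of $\infty$-operads we have $p'F=p$ and $pG=p'$. For $X \in \calO^\otimes$ and $Y \in {\calO'}^\otimes$ the adjunction $F \dashv G$ furnishes a natural equivalence
$$
\phi_{X,Y}\colon \Map_{{\calO'}^\otimes}(FX,Y) \xto{\ \sim\ } \Map_{\calO^\otimes}(X,GY), \qquad g \longmapsto Gg \circ \alpha_X .
$$
The first thing I would check is that $\phi$ is compatible with the projections down to $\N\Fin$, and this is really the whole point of the lemma.

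Indeed, applying $p$ to $\phi_{X,Y}(g)=Gg\circ\alpha_X$ yields $p(Gg)\circ p(\alpha_X)=p'(g)\circ\id_{pX}=p'(g)$, using $pG=p'$ and $p(\alpha_X)=\id_{pX}$. Since $p'FX=pX$ and $pGY=p'Y$, both $\Map_{{\calO'}^\otimes}(FX,Y)$ and $\Map_{\calO^\otimes}(X,GY)$ map to the common discrete set $\Map_{\N\Fin}(pX,p'Y)$, and the computation shows that $\phi_{X,Y}$ is an equivalence living over this set. Everything else is then a matter of restricting $\phi$ to the appropriate preimages.

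For the first adjunction I would take $x,y$ over $\langle 1\rangle$. Because $\calO^\otimes \to \N\Fin$ is an inner fibration over the discrete base $\N\Fin$, the mapping space $\Map_{\calO}(x,G_{\langle 1\rangle}y)$ in the fibre $\calO=\calO^\otimes_{\langle 1\rangle}$ is exactly the union of components of $\Map_{\calO^\otimes}(x,Gy)$ lying over $\id_{\langle 1\rangle}$, and similarly for $\calO'$. As $\phi_{x,y}$ is an equivalence over $\Map_{\N\Fin}(\langle 1\rangle,\langle 1\rangle)$, it restricts to an equivalence of these components, naturally in $x$ and $y$. Moreover $\alpha_x\colon x \to GFx$ projects to $\id_{\langle 1\rangle}$, hence is a morphism of the fibre, so $\alpha$ restricts to a transformation $\id_{\calO}\to G_{\langle 1\rangle}F_{\langle 1\rangle}$; the restricted equivalence is precisely the one induced by this transformation, which by the standard mapping-space criterion exhibits $F_{\langle 1\rangle}\dashv G_{\langle 1\rangle}$.

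The second adjunction is the same argument with the subcategory $\N\Fin^\act \subseteq \N\Fin$ in place of the single object $\langle 1\rangle$. For $X \in \calO^\otimes_\act$ and $Y \in {\calO'}^\otimes_\act$ the space $\Map_{\calO^\otimes_\act}(X,G_\act Y)$ is the union of components of $\Map_{\calO^\otimes}(X,GY)$ lying over $\Map_{\N\Fin^\act}(pX,p'Y)\subseteq \Map_{\N\Fin}(pX,p'Y)$, because the pullback defining $\calO^\otimes_\act$ only discards morphisms over inactive ones and the base is discrete. Since each $\alpha_X$ lies over the (active) identity, $\alpha$ restricts to a transformation $\id_{\calO^\otimes_\act}\to G_\act F_\act$, and $\phi$ restricts to a natural equivalence over these subsets, exhibiting $F_\act \dashv G_\act$. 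I do not expect a genuine obstacle here: the only step needing care is the identification of mapping spaces in the fibre (resp. in the active pullback) with the corresponding preimages in $\calO^\otimes$, which is the standard behaviour of inner fibrations with discrete base \cite{HTT}. One could alternatively deduce both statements from the general theory of relative adjunctions \cite[Section 7.3.2]{HA}, of which an operadic adjunction is a direct instance.
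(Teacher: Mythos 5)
Your argument is correct, but it is not the route the paper takes: the paper disposes of this lemma in one line by citing Lurie's result on relative adjunctions \cite[Proposition 7.3.2.5]{HA}, which says precisely that a relative adjunction over a base pulls back to a (relative) adjunction along any map into the base; applying this to $\{\langle 1\rangle\} \hookrightarrow \N\Fin$ and to $\N\Fin^\act \hookrightarrow \N\Fin$ gives both statements at once. What you have written out is, in effect, a self-contained proof of the special case of that proposition needed here, and it is the right proof: the computation $p(\phi_{X,Y}(g)) = p'(g)\circ p(\alpha_X) = p'(g)$ is exactly where the defining condition $p(\alpha)=\id_p$ of an operadic adjunction enters, and the identification of fibrewise (resp.\ active) mapping spaces with the preimages of $\{\id_{\langle 1\rangle}\}$ (resp.\ of $\Map_{\N\Fin^\act}$) under $\Map_{\calO^\otimes}(X,GY)\to\Map_{\N\Fin}(pX,p'Y)$ is standard for inner fibrations whose base is the nerve of an ordinary category. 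One small wording correction: $\N\Fin$ is not a discrete simplicial set; what you actually use, and what is true, is that its mapping spaces are discrete, so that the relevant preimages are unions of connected components and the restricted $\phi$ is again an equivalence. Your approach buys transparency and avoids invoking the machinery of relative adjunctions; the paper's citation buys brevity and the extra generality of pulling back along an arbitrary functor to $\N\Fin$. You correctly flag the citation as an alternative at the end, so the two proofs meet.
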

\begin{proof}
This follows directly from \cite[Proposition 7.3.2.5]{HA}.
\end{proof}
It easily follows from the second equivalence of this lemma that an operadic right adjoint functor preserves operadic limits.

\begin{example}\label{exampleeins}
\begin{enumerate}
\item
Let $\calC^\otimes$ and $\calC'^\otimes$ be symmetric monoidal $\infty$-categories and $F: \calC^\otimes \to \calC'^\otimes$ be a map of operads. 
Then $F$ admits an operadic right adjoint if and only if the 
underlying functor $F_{\langle 1 \rangle}: \calC \to \calC'$ admits a right adjoint. This follows from \cite[Corollary 7.3.2.7]{HA}.
\item \label{pointeuzwei}
Assume that $\calD \subseteq \calC$ is a full, reflective subcategory of the $\infty$-category $\calC$ with localization $L: \calC \to \calD$. Assume that $\calC$ has a symmetric monoidal struture $\calC^\otimes$. Then we have the induced operad structure $
i: \calD^\otimes \subseteq \calC^\otimes$ such that the inclusion $i$ is a fully faithful map of $\infty$-operads. Assume that the localization is compatible with the symmetric monoidal structure in the following sense:
\begin{quote}
For every $L$-equivalence $X \to Y$ in $\calC$ and every object $Z \in \calC$ the induced map $X \otimes Z \to Y \otimes Z$ is also an $L$-equivalence.
\end{quote}
Then $\calD^\otimes$ is symmetric monoidal, and the inclusion $i$ admits an operadic left adjoint $L^\otimes$ which is a symmetric monoidal functor. This is \cite[Proposition 2.2.1.9.]{HA}.
\item \label{pointudrei}
In the the situation of \eqref{pointeuzwei} assume that $\calC$ admits internal mapping objects. That is a functor $\imap_\calC(X,-): \calC \to \calC$ right adjoint to $- \otimes X  : \calC \to \calC$. 
Then instead of the criterion given in \eqref{pointeuzwei} it suffices to check the following
\begin{quote}
For every object $X \in \calD$ and every object $Z \in \calC$ the internal mapping object $\imap_\calC(Z,X) \in \calC$ is equivalent to an object in $\calD$.
\end{quote}
The equivalence of the two criteria follows easily from the adjunction property of the tensor product and the internal mapping space.
\end{enumerate}
\end{example}

\section{Day convolution of \texorpdfstring{$\infty$}{infinity}-operads}\label{day}

In this section we recall and expand some results about the Day convolution of $\infty$-categories as developed by Glasman \cite{Glas}.
This Day convolution for $\infty$-categories is a generalization of the classical one for ordinary categories by Day \cite{day}. 
 The results of this section will be needed in some of the proofs in later chapters. But we have tried to formulate the statements in later chapters  without reference to the terminology introduced here so that this section can be skipped at a first reading. \\

Assume $\calC^\otimes \to \N\Fin$ is a symmetric monoidal $\infty$-category, i.e. a coCartesian fibration. Assume moreover that $\calO^\otimes \to \N\Fin$ is an $\infty$-operad.
We consider the internal hom in simplicial sets  over $\N\Fin$ denoted by $\hom_{{/\N\Fin}}\left(\calC^\otimes, \calO^\otimes\right)$. 
This is by definition the simplicial set over $\N\Fin$ (unique up to isomorphism) such that for every other simplicial set $K \to \N\Fin$ there is a bijection 
$$\Hom\left(K, \hom_{{/\N\Fin}}\left(\calC^\otimes, \calO^\otimes\right)\right) \xto{\sim} \Hom\left(K \times_{\N\Fin} \calC^\otimes, \calO^\otimes\right)$$
which naturally depends on $K$. Here $\Hom$ denotes the set of homomorphisms of simplicial sets over $\N\Fin$. In particular the fibre of  $\hom_{{/\N\Fin}}\left(\calC^\otimes, \calO^\otimes\right)$ over $\langle n \rangle$ is isomorphic to the simplicial set 
$\Fun\left(\calC^\otimes_{\langle n \rangle},\calO^\otimes_{\langle n \rangle}\right)$.
%

\begin{lemma}\label{inner}
The morphism $\hom_{{/\N\Fin}}\left(\calC^\otimes, \calO^\otimes\right) \to \N\Fin$ is an inner fibration.
\end{lemma}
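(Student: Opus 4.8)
The plan is to verify directly the right lifting property against the inner horn inclusions $\Lambda^n_i \hookrightarrow \Delta^n$ for $0 < i < n$, translating each such lifting problem through the adjunction that defines the relative internal hom. Write $q \colon \hom_{/\N\Fin}(\calC^\otimes, \calO^\otimes) \to \N\Fin$ for the structure map. A lifting problem for $q$ consists of a commuting square whose top edge is a map $\Lambda^n_i \to \hom_{/\N\Fin}(\calC^\otimes, \calO^\otimes)$ and whose bottom edge is a simplex $\sigma \colon \Delta^n \to \N\Fin$. By the defining bijection of $\hom_{/\N\Fin}$, the top map transposes to a map $\sigma^*\calC^\otimes|_{\Lambda^n_i} \to \calO^\otimes$ over $\N\Fin$, where $\sigma^*\calC^\otimes = \Delta^n \times_{\N\Fin} \calC^\otimes$, and solving the lifting problem for $q$ is equivalent to extending this along the inclusion $\sigma^*\calC^\otimes|_{\Lambda^n_i} \hookrightarrow \sigma^*\calC^\otimes$ to a map over $\N\Fin$.

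Since $\calC^\otimes$ is symmetric monoidal and $\calO^\otimes$ is an $\infty$-operad, the map $\calO^\otimes \to \N\Fin$ is itself an inner fibration, and hence has the right lifting property against every inner anodyne map. Therefore the lemma reduces to the following purely combinatorial claim: for every coCartesian fibration $E \to \Delta^n$ and every $0 < i < n$, the inclusion $E|_{\Lambda^n_i} \hookrightarrow E$ is inner anodyne. Indeed, applying this with $E = \sigma^*\calC^\otimes$ --- which is a coCartesian fibration over $\Delta^n$, being the pullback of the coCartesian fibration $\calC^\otimes \to \N\Fin$ along $\sigma$ --- produces the required extension, and transposing it back gives the filler $\Delta^n \to \hom_{/\N\Fin}(\calC^\otimes, \calO^\otimes)$.

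The main obstacle is exactly this combinatorial claim, and the coCartesian hypothesis on $\calC^\otimes$ is essential here: for a general simplicial set $E \to \Delta^n$ the inclusion need not even be a categorical equivalence (already for $n = 2$, $i = 1$, taking $E = \Delta^{\{0,2\}} \hookrightarrow \Delta^2$ yields $\partial\Delta^1 \hookrightarrow \Delta^1$ after pullback along $\Lambda^2_1$). To prove the claim I would first treat the split case $E = K \times \Delta^n$, where $E|_{\Lambda^n_i} = K \times \Lambda^n_i$ and the inclusion is inner anodyne by the stability of the class of inner anodyne maps under pushout--product, applied to the inner horn inclusion $\Lambda^n_i \hookrightarrow \Delta^n$ and the monomorphism $\emptyset \hookrightarrow K$. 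For the general twisted case I would reduce to this situation using the structure theory of coCartesian fibrations over a simplex: filter $E$ by the skeleta of its fibres (equivalently, present $E$ by a mapping simplex built from the composable sequence $E_0 \to E_1 \to \cdots \to E_n$ that it classifies), and check that each stage is obtained from the previous one by pushouts of inner horns. The coCartesian lifts over the edges of $\Delta^n$ are precisely what guarantee that the newly attached nondegenerate simplices glue along inner, rather than outer, faces, so that only inner horns occur; this is the step that genuinely fails in the counterexample above.

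Assembling the pieces, the transposed extension problem is solvable because $\calO^\otimes \to \N\Fin$ lifts against the inner anodyne inclusion $\sigma^*\calC^\otimes|_{\Lambda^n_i} \hookrightarrow \sigma^*\calC^\otimes$, and transposing the solution back across the adjunction yields the desired filler for $q$. This shows that $\hom_{/\N\Fin}(\calC^\otimes, \calO^\otimes) \to \N\Fin$ is an inner fibration.
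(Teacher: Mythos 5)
Your argument is correct and takes essentially the same route as the paper: transpose the inner-horn lifting problem for $0<i<n$ across the defining adjunction of $\hom_{/\N\Fin}$, and reduce to the fact that $\Lambda^n_i\times_{\N\Fin}\calC^\otimes\to\Delta^n\times_{\N\Fin}\calC^\otimes$ is inner anodyne because $\calC^\otimes\to\N\Fin$ is coCartesian, which lets the inner fibration $\calO^\otimes\to\N\Fin$ supply the lift. The combinatorial claim you isolate (and sketch via the mapping-simplex filtration, with a correct counterexample showing the coCartesian hypothesis is needed) is precisely the dual of \cite[Proposition 3.3.1.3]{HTT}, which the paper simply cites at this step.
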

\begin{proof}
This follows as in \cite{Glas}: we have to find a lift in every diagram
$$
\xymatrix{
\Lambda^n_i \ar[r]\ar[d] &\hom_{{/\N\Fin}}\left(\calC^\otimes, \calO^\otimes\right)\ar[d] \\
\Delta^n \ar[r]\ar@{-->}[ru] & \N\Fin
}
$$
for $0 < i < n$. But this is by adjunction the same as a lift in the diagram
$$
\xymatrix{
\Lambda^n_i \times_{\N\Fin} \calC^\otimes \ar[r]\ar[d] & \calO^\otimes\ar[d] \\
\Delta^n \times_{\N\Fin} \calC^\otimes \ar[r]\ar@{-->}[ru] & \N\Fin
}
$$
Since $\calO^\otimes \to \N\Fin$ is inner it suffices to show that the left vertical morphism is inner anodyne. But that follows from the dual of \cite[Proposition 3.3.1.3]{HTT}.
\end{proof}

\begin{definition}\label{definitionday}
Let $\calC^\otimes$ be a symmetric monoidal $\infty$-category and $\calO^\otimes$ be an $\infty$-operad. Then the Day convolution is defined as the the full simplicial subset
$
\Fun(\calC, \calO)^\otimes \subseteq \hom_{{/\N\Fin}}\left(\calC^\otimes, \calO^\otimes\right)
$
 whose fibre over $\langle n \rangle$ consists of those functors 
$$F: \calC^\otimes_{\langle n \rangle} \to \calO^\otimes_{\langle n \rangle}$$
 such that under the equivalences 
 $$
 \rho: \calC^\otimes_{\langle n \rangle} \xto{\sim} \prod_n \calC \qquad \rho': \calO^\otimes_{\langle n \rangle} \xto{\sim} \prod_n \calO
 $$
 we have an equivalence of functors $\rho' \circ F \simeq (F_1 \times...\times F_n) \circ \rho$ for functors $F_i: \calC\to \calO$. 
\end{definition}
The last condition can be expressed more informally by saying that the functor $F$ is in diagonal form.
Note that the notation here is slightly abusive since $\Fun(\calC,\calO)^\otimes$ depends on the operad structures $\calC^\otimes$ and $\calO^\otimes$ and not only on the underlying $\infty$-categories $\calC$ and $\calO$. But the underlying $\infty$-category 
$\Fun(\calC,\calO)^\otimes_{\langle 1 \rangle}$ is the functor category $\Fun(\calC,\calO)$ and therefore we choose the notation as it is.

\begin{proposition}\label{propopeee}
Assume $\calC^\otimes$ is a small symmetric monoidal $\infty$-category and $\calO^\otimes$ is a small $\infty$-operad $\calO$. Then  $\Fun(\calC, \calO)^\otimes \to \N\Fin$ is an $\infty$-operad. 
If $\calD$ is presentably symmetric monoidal, then so is $\Fun(\calC,\calD)^\otimes$.

There is an equivalence of symmetric monoidal $\infty$-categories $\CAlg(\Fun(\calC,\calO)) \simeq \Alg_\calC(\calO)$. In the case that $\calO$ is symmetric monoidal this means that a commutative algebra in $\Fun(\calC,\calO)$ is essentially the same thing as a lax symmetric monoidal
functor $\calC \to \calO$. 
\end{proposition}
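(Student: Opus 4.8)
The plan is to treat the three assertions in turn. For the operad structure I would verify the three defining axioms directly for $\Fun(\calC,\calO)^\otimes \to \N\Fin$. That this is an inner fibration is immediate from Lemma \ref{inner}: the ambient $\hom_{/\N\Fin}(\calC^\otimes,\calO^\otimes)$ is already inner over $\N\Fin$, and since $\Fun(\calC,\calO)^\otimes$ is by definition a \emph{full} simplicial subset, any inner horn $\Lambda^n_i \to \Fun(\calC,\calO)^\otimes$ admits a filler in $\hom_{/\N\Fin}(\calC^\otimes,\calO^\otimes)$ all of whose vertices already lie in the subset, so the filler lies in $\Fun(\calC,\calO)^\otimes$. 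The Segal condition holds essentially by construction: the diagonal-form clause of Definition \ref{definitionday} identifies the fibre over $\langle n\rangle$ with $\prod_n \Fun(\calC,\calO)$, with the inert projections $\rho^i_!$ implementing the product decomposition. It then remains to produce coCartesian lifts over inert morphisms of $\N\Fin$, which I would build from the inert coCartesian lifts of $\calC^\otimes$ and $\calO^\otimes$ exactly as in Glasman \cite{Glas}. The only point to flag is that Glasman assumes the target symmetric monoidal whereas $\calO^\otimes$ is here only an operad; but this construction never uses coCartesian lifts over active morphisms, so the argument carries over verbatim.

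For the presentable case I would upgrade the operad to a symmetric monoidal $\infty$-category by producing coCartesian lifts over \emph{all} morphisms, in particular the active ones. Over an active morphism the lift is the Day convolution, computed as a left Kan extension along the tensor product functor $\calC \times \calC \to \calC$ (and its higher-arity analogues); this exists because $\calD$ is cocomplete. The underlying $\infty$-category $\Fun(\calC,\calD)$ is presentable as a functor category with small source and presentable target, and the Day tensor preserves colimits in each variable separately because it is computed pointwise by a colimit and $\otimes_\calD$ does so. Here I would cite \cite[Cor. 4.8.1.12]{HA} together with \cite{Glas} for the details.

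The main point is the algebra identification, and I would extract it from the defining adjunction of the relative internal hom. Taking $K = \N\Fin \times \Delta^\bullet$ in that adjunction yields a natural isomorphism of simplicial sets
$$
\Fun_{/\N\Fin}\big(\N\Fin,\, \hom_{/\N\Fin}(\calC^\otimes,\calO^\otimes)\big) \;\cong\; \Fun_{/\N\Fin}\big(\calC^\otimes, \calO^\otimes\big),
$$
carrying a section $s$ to the functor $\phi$ whose restriction to each fibre $\calC^\otimes_{\langle n\rangle}$ is $s(\langle n\rangle)$. Since $\CAlg(\Fun(\calC,\calO))$ and $\Alg_\calC(\calO)$ are \emph{full} subcategories of the two sides (the left spanned by sections factoring through $\Fun(\calC,\calO)^\otimes$ and carrying inert morphisms to coCartesian ones, the right by maps of operads), it suffices to match objects. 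To do this I would first characterise, following Glasman, the coCartesian morphisms of $\Fun(\calC,\calO)^\otimes$ lying over inert morphisms; with that in hand, the diagonal-form condition on $s$ translates into $\phi$ commuting fibrewise with the elementary inert pushforwards $\rho^i_!$, and the condition that $s$ preserve inert coCartesian edges translates into $\phi$ preserving inert coCartesian edges between fibres, so that together they say precisely that $\phi$ is a map of $\infty$-operads. The isomorphism then restricts to the claimed equivalence, and when $\calO$ is symmetric monoidal the right-hand side is by definition $\Fun_\lax(\calC,\calO)$. I expect the genuine obstacle to lie exactly in this last bookkeeping: correctly identifying the inert coCartesian edges of the Day convolution and checking that the two separate conditions on a section assemble into the single operad-map condition on $\phi$, with no gap and no redundancy.
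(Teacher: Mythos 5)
Your overall strategy is sound, but it is a genuinely different route from the paper's, and in one place it is noticeably thinner than you acknowledge. The paper does \emph{not} verify the operad axioms for $\Fun(\calC,\calO)^\otimes$ directly when $\calO^\otimes$ is a general operad. Instead it first treats the presentably symmetric monoidal case entirely by citation to Glasman (\cite[Prop.\ 2.9, 2.11]{Glas}), and then handles a general small $\calO^\otimes$ by choosing a fully faithful embedding $i:\calO^\otimes\to\calD^\otimes$ into a presentably symmetric monoidal $\infty$-category (Proposition \ref{limits} with $\calK=\emptyset$), observing that $\Fun(\calC,i(\calO))^\otimes\subseteq\Fun(\calC,\calD)^\otimes$ is the full simplicial subset spanned by products of objects of $\Fun(\calC,i(\calO))$ --- and a full simplicial subset of an operad spanned by products of a subset of the colours is automatically again an operad --- before transporting back along the equivalence $\calO^\otimes\simeq i(\calO)^\otimes$ using Lemma \ref{inner}. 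The algebra identification is likewise outsourced wholesale to \cite[Prop.\ 2.12]{Glas}, whereas you re-derive it from the defining adjunction of the relative internal hom; your unwinding of how the diagonal-form condition plus preservation of inert coCartesian edges for a section assembles into the operad-map condition on $\phi$ is exactly the content of Glasman's argument, and your identification of where the bookkeeping lives is accurate. What the paper's route buys is that it never has to exhibit or certify inert coCartesian lifts in $\Fun(\calC,\calO)^\otimes$ by hand; what your route buys is independence from the presentable case (you build the general operad first and only then add active lifts) and it is essentially the Barwick--Glasman--Nardin generalisation alluded to in the remark following the proposition.

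The one point I would not let pass as written is the claim that Glasman's construction of inert coCartesian lifts ``carries over verbatim'' when $\calO^\otimes$ is only an operad. Producing the candidate edge over an inert $f$ is indeed unproblematic, but \emph{certifying} that it is coCartesian requires controlling mapping spaces in $\hom_{/\N\Fin}(\calC^\otimes,\calO^\otimes)$ over arbitrary composites $g\circ f$, including active $g$, where the fibration $\calO^\otimes_g\to\Delta^1$ is no longer coCartesian and the clean description of edges in Lemma \ref{lemalifts} is not available on the nose (one must instead use that $\calC^\otimes_g\to\Delta^1$ is coCartesian and describe the relevant lifting spaces from that side). This is doable, but it is precisely the verification the paper's embedding trick is engineered to avoid, and ``verbatim'' undersells the work. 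Either carry out that mapping-space analysis, or replace this step by the paper's reduction to the presentable case.
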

\begin{proof}
Assume first that $\calO^\otimes$ is presentably symmetric monoidal. Then the underlying $\infty$-category $\Fun(\calC,\calO)$ of $\Fun(\calC, \calO)^\otimes$ is presentable. The fact that it is symmetric monoidal is \cite[Proposition 2.9]{Glas} and the fact that the tensor product preserves
colimits separately in each variable is \cite[Proposition 2.11]{Glas}. Now let $\calO$ be a small $\infty$-operad. Choose an operadically fully faithful embedding $i: \calO^\otimes \to \calD^\otimes$
 where $\calD$ is presentably symmetric monoidal. To see that this is possible apply Proposition \ref{limits} with $\calK = \emptyset$.
 Let $i(\calO)^\otimes \subseteq \calD^\otimes$ be the full subcategory spanned by the essential image of $i$. Then $\Fun(\calC, i(\calO))^\otimes \subseteq \Fun(\calC, \calD)^\otimes$ is also a full subcategory. Since it is 
spanned by the products of elements in $\Fun(\calC, \calO)$ it follows immediately that it is again an $\infty$-operad. 
Now finally there is an evident equivalence of $\infty$-operads  $\calO^\otimes \to i(\calO)^\otimes$. This induces an equivalence of the $\infty$-categories $\Fun(\calC,\calO)^\otimes \simeq \Fun(\calC,i(\calO))^\otimes$ over $\N\Fin$. Since according to \ref{inner}
the first is an inner fibration it follows that it is an $\infty$-operad from the fact that the second is.

The second claim directly follows from \cite[Proposition 2.12.]{Glas}.
\end{proof}

\begin{remark}
The conclusion of Proposition \ref{propopeee} is true more generally than stated (see \cite{Glas} and \cite{2015arXiv150503098B}): first one can allow the source $\calC^\otimes$ to be pro-op-monoidal instead of symmetric mon\-oi\-dal. That means that $\calC^\otimes$ is an $\infty$-operad such that 
$\calC^\otimes \to \N\Fin$ is a flat inner fibration (see 
\cite[Definition B.3.8]{HA}). In that case $\Fun(\calC,\calO)^\otimes$ defined with the same formula is still is an $\infty$-operad. Also for $\Fun(\calC, \calO)^\otimes$ to be symmetric monoidal
one does not need $\calO^\otimes$ to be presentably symmetric monoidal, but it is enough if $\calO^\otimes$ is symmetric monoidal and has sufficient colimits. We shall however not need these generalizations in this paper.
\end{remark}

In order to understand the coCartesian lifts in $\Fun(\calC, \calO)^\otimes$ let us unfold what a 1-simplex in $\hom_{{/\N\Fin}}\left(\calC^\otimes, \calO^\otimes\right)$ is. First  it covers a morphism 
$f: \langle n \rangle \to \langle m \rangle$ in $\Fin$. We denote by $\calC^\otimes_f$ the pullback of $\calC^\otimes \to \N\Fin$ along the functor  $\Delta^1 \to \N\Fin$ induced by $f$. Similarly we have $\calO^\otimes_f$. 
Then by the universal property a 1-simplex  in  $\hom_{{/\N\Fin}}\left(\calC^\otimes, \calO^\otimes\right)$ covering $f$ is the same as a morphism $\calC^\otimes_f \to \calO^\otimes_f$ over $\Delta^1$. 
Now let us assume that $\calO^\otimes$ and $\calC^\otimes$ admit coCartesian lifts for $f$ (this is by assumption the case for $\calC$ and for $\calO$ it is the case if $f$ is inert or if $\calO$ is symmetric monoidal). We denote the resulting functors
by 
$$
f_!^\calC: \calC^\otimes_{\langle n \rangle} \to \calC^\otimes_{\langle m \rangle} \qquad \text{and} \qquad f_!^\calO: \calO^\otimes_{\langle n \rangle} \to \calO^\otimes_{\langle m \rangle}
$$ 
Then a morphism $\calC^\otimes_f \to \calO^\otimes_f$ is essentially the same thing as a pair of functors $F: \calC^\otimes_{\langle n \rangle} \to \calO^\otimes_{\langle n \rangle}$ and 
$G: \calC^\otimes_{\langle m \rangle} \to \calO^\otimes_{\langle m \rangle}$ together with a transformation $\eta: f^\calO_! \circ  F \to G \circ f^\calC_!$. 
\begin{lemma}\label{lemalifts}
Let $f: \langle n \rangle \to \langle m \rangle $ be a morphism in $\Fin$.  Now assume that either $\calO^\otimes$ is presentably symmetric monoidal or that $f$ is inert (also recall that $\calC^\otimes$ is symmetric monoidal).
An edge in $\Fun(\calC, \calO)^{\otimes}$ over $f$ is essentially the same thing as a (not necessarily commutative) square
$$
\xymatrix{
\calC^\otimes_{\langle n \rangle} \ar[rr]^{F_1 \times ... \times F_n} \ar[d]^{f_!^\calC} && \calO^\otimes_{\langle n \rangle}\ar[d]^{f_!^\calO} \\
\calC^\otimes_{\langle m \rangle} \ar[rr]^{G_1 \times ... \times G_m}  && \calO^\otimes_{\langle m \rangle} 
}
$$
together with a transformation $\eta: f^\calO_! \circ  F \to G \circ f^\calC_!$ where we abbreviate $F = F_1 \times ... \times F_n$ and $G = G_1 \times ... \times G_m$. 
This edge is coCartesian if and only if the transformation $\eta$ exhibits $G$ as the left Kan extension of 
$f_!^\calO \circ F$ along $f_!^\calC$. 
\end{lemma}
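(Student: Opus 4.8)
The plan is to read off the description of edges from the universal property of the internal hom together with the standard classification of functors between coCartesian fibrations over $\Delta^1$, and then to extract coCartesianness from the mapping-space criterion. For the first assertion: by the defining adjunction of $\hom_{/\N\Fin}(\calC^\otimes,\calO^\otimes)$ an edge lying over $f$ is the same datum as a map $\calC^\otimes_f \to \calO^\otimes_f$ over $\Delta^1$. Since $\calC^\otimes$ is symmetric monoidal the fibration $\calC^\otimes_f \to \Delta^1$ is coCartesian, classified by $f_!^\calC$; and under the hypothesis that either $\calO^\otimes$ is presentably symmetric monoidal or $f$ is inert the fibration $\calO^\otimes_f \to \Delta^1$ is coCartesian as well, classified by $f_!^\calO$. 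A functor between two coCartesian fibrations over $\Delta^1$ is exactly the data of its two fibre restrictions $F,G$ together with a (generally non-invertible) natural transformation $\eta\colon f_!^\calO\circ F \to G\circ f_!^\calC$, so this already produces the square with its $2$-cell; passing to the full subcategory $\Fun(\calC,\calO)^\otimes$ forces $F$ and $G$ into diagonal form $F=F_1\times\cdots\times F_n$, $G=G_1\times\cdots\times G_m$.

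For the coCartesian characterisation I would invoke the mapping-space criterion for the structure map $p\colon \Fun(\calC,\calO)^\otimes \to \N\Fin$: the edge $e=(F,G,\eta)$ is $p$-coCartesian iff for every object $\Xi$, lying over some $\langle k\rangle$ and corresponding to a diagonal $H$, the canonical map
$$\Map(\Psi,\Xi) \xto{\sim} \Map(\Phi,\Xi)\times_{\Map_{\N\Fin}(\langle n\rangle,\langle k\rangle)}\Map_{\N\Fin}(\langle m\rangle,\langle k\rangle)$$
is an equivalence. Because the mapping spaces in $\N\Fin$ are discrete this decomposes over morphisms $g\colon\langle m\rangle\to\langle k\rangle$, and reduces to the statement that for each such $g$ precomposition with $e$ is an equivalence from the space of edges $\Psi\to\Xi$ over $g$ onto the space of edges $\Phi\to\Xi$ over $g\circ f$. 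I would first cut the problem down to $m=1$: since the Segal maps exhibit the fibre over $\langle m\rangle$ as $\prod_l\calO^\otimes_{\langle 1\rangle}$ and coCartesianness can be checked after composing with the inert pushforwards $\rho^l_!$, the edge $e$ is coCartesian iff each composite $\Phi\to\Psi\to G_l$ over $\rho^l\circ f$ is, so it suffices to treat morphisms with target $\langle 1\rangle$. In that range the diagonal test objects over $\langle 1\rangle$ are just arbitrary functors $H\colon\calC\to\calO$.

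Now applying the fibre description of edges once more, and using the coherence $g_!\circ f_!\simeq (g\circ f)_!$ of coCartesian transport, both edge-spaces become spaces of natural transformations and precomposition with $e$ becomes the operation of pasting the square $\eta$; taking $g=\id$ this pasting map is precisely the comparison $\Map(G,H)\to\Map(f_!^\calO F,\, H\circ f_!^\calC)$ whose being an equivalence for all $H$ is by definition the statement that $\eta$ exhibits $G$ as $\mathrm{Lan}_{f_!^\calC}(f_!^\calO\circ F)$. Thus coCartesianness yields the left Kan extension property, and conversely whiskering shows that the Kan extension property forces the comparison to be an equivalence for every $g$; reassembling the components recovers the asserted statement for general $m$. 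The step I expect to be the main obstacle is exactly this identification: verifying carefully that pasting with $\eta$ is the Kan-extension comparison for all $g$ and all diagonal $\Xi$, and that the whole family of these conditions collapses to the single statement about $G$. A subsidiary subtlety is the inert case, where $\calO^\otimes$ only carries coCartesian pushforwards over inert morphisms, so the transformation-space description of edges over $g\circ f$ is available only for inert $g$ and the argument must be arranged to use the operadic coCartesian lifts supplied by Proposition \ref{propopeee} rather than a global pushforward.
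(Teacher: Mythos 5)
Your proposal is correct in substance, but it does considerably more work than the paper, which disposes of this lemma in one line: the first assertion ``follows from the discussion above'' (the unfolding of a $1$-simplex of $\hom_{/\N\Fin}(\calC^\otimes,\calO^\otimes)$ over $f$ as a map $\calC^\otimes_f\to\calO^\otimes_f$ over $\Delta^1$, hence as a pair $(F,G)$ with a transformation $\eta$), and the coCartesian characterisation is simply cited from Glasman's paper. Your treatment of the first assertion is identical to that discussion. For the second assertion you supply an actual proof where the paper defers to a reference: the mapping-space criterion, the decomposition over the discrete hom-sets of $\N\Fin$, the reduction to targets over $\langle 1\rangle$ via the Segal equivalences, and the identification of pasting with $\eta$ at $g=\id$ with the Kan-extension comparison. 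This is the right argument and is essentially what Glasman proves. Two points deserve the care you already partly flag: (i) the coCartesian condition inside the full suboperad $\Fun(\calC,\calO)^\otimes$ only tests against \emph{diagonal} functors $H$, so to conclude that $G$ is the left Kan extension in the ambient functor category one needs that this Kan extension of a diagonal functor is again diagonal (this is part of what makes Day convolution an $\infty$-operad, and is where the hypothesis ``presentably symmetric monoidal or $f$ inert'' enters); and (ii) the collapse of the condition ``for every $g$'' to the single case $g=\id$ requires that $g_!^\calO$ preserve the relevant left Kan extension, which holds because $g_!^\calO$ preserves colimits in the presentable case and because everything is a product projection in the inert case. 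Neither point is a gap in your plan, but both would need to be written out to make the argument self-contained; the paper avoids them by citation.
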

\begin{proof}
The first part follows from the discussion above and the second is a reformulation of the description of coCartesian lifts given in \cite{Glas}.
\end{proof}

\begin{corollary}\label{univdaybla}
Let $\calC^\otimes$ be symmetric monoidal and $\calD^\otimes$ be presentably symmetric monoidal. Then we have a natural equivalence
$$
\Map_{\Fun(\calC,\calD)}(E \otimes F, G) \simeq  \Map_{\Fun(\calC \times \calC, \calD)}( \otimes^{\calD} \circ (E \times F), G \circ \otimes^\calC)  
$$
where $\otimes^\calC: \calC \times \calC \to \calC$ and $\otimes^\calD: \calD \times \calD \to \calD$ are the tensor bifunctors of $\calC$ and $\calD$.
\end{corollary}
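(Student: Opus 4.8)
The plan is to identify the Day convolution product $E \otimes F$ with an explicit left Kan extension and then to read off the claimed equivalence from the defining adjunction of that Kan extension.

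First I would recall that in any symmetric monoidal $\infty$-category the binary tensor product of two objects is computed as the coCartesian pushforward along the unique active morphism $\mu\colon \langle 2\rangle \to \langle 1\rangle$ in $\Fin$. Applied to the Day convolution, this means that $E\otimes F$ is the image of $(E,F) \in \Fun(\calC,\calD)^\otimes_{\langle 2\rangle} \simeq \Fun(\calC,\calD)\times\Fun(\calC,\calD)$ under the coCartesian lift of $\mu$. Since $\calD$ is presentably symmetric monoidal, the hypotheses of Lemma \ref{lemalifts} are satisfied for $f = \mu$, and I can read off the relevant data: under the diagonal-form equivalences $\calC^\otimes_{\langle 2\rangle} \simeq \calC\times\calC$ and $\calD^\otimes_{\langle 2\rangle}\simeq \calD\times\calD$ the pushforward $\mu_!^\calC$ is precisely the tensor bifunctor $\otimes^\calC$, likewise $\mu_!^\calD = \otimes^\calD$, and the top horizontal functor is $E\times F$. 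Lemma \ref{lemalifts} then tells me that the coCartesian lift produces $E\otimes F$ together with a transformation $\eta\colon \otimes^\calD \circ (E\times F) \to (E\otimes F)\circ \otimes^\calC$ that exhibits $E\otimes F$ as the left Kan extension of $\otimes^\calD\circ(E\times F)$ along $\otimes^\calC$.

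With this identification in hand the equivalence becomes a formal consequence of the universal property of left Kan extension. The functor $\mathrm{Lan}_{\otimes^\calC}\colon \Fun(\calC\times\calC,\calD) \to \Fun(\calC,\calD)$ is left adjoint to restriction along $\otimes^\calC$, that is, to $G \mapsto G\circ \otimes^\calC$; this adjoint exists because $\calD$ admits all small colimits. Setting $H := \otimes^\calD\circ(E\times F)$ and using $E\otimes F \simeq \mathrm{Lan}_{\otimes^\calC} H$, the adjunction gives
$$
\Map_{\Fun(\calC,\calD)}(E\otimes F, G) \simeq \Map_{\Fun(\calC\times\calC,\calD)}(H, G\circ\otimes^\calC),
$$
which is exactly the asserted equivalence. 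Naturality in $G$ is built into the adjunction, and naturality in $E$ and $F$ follows from the functoriality of the assignment $(E,F) \mapsto \otimes^\calD\circ(E\times F)$.

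The one point that will require genuine care, and the step I expect to be the main obstacle, is the identification carried out in the second paragraph: translating the abstract coCartesian-pushforward definition of the monoidal product on the Day convolution into the concrete left Kan extension along $\otimes^\calC$. Everything rests on correctly matching $\mu_!^\calC$ and $\mu_!^\calD$ with the tensor bifunctors under the diagonal-form equivalences, and on recognizing that the coCartesian-edge criterion of Lemma \ref{lemalifts} says exactly that $\eta$ is a Kan-extension transformation. Once this dictionary is in place, the remaining adjunction between $\mathrm{Lan}_{\otimes^\calC}$ and restriction is entirely standard and costs nothing.
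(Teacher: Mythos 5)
Your proposal is correct and follows essentially the same route as the paper: identify $E\otimes F$ as the left Kan extension of $\otimes^\calD\circ(E\times F)$ along $\otimes^\calC$ via the coCartesian-edge description in Lemma \ref{lemalifts} for the active map $\langle 2\rangle\to\langle 1\rangle$, then invoke the universal property of left Kan extension. You simply spell out the identification of $\mu_!^\calC$ and $\mu_!^\calD$ with the tensor bifunctors in more detail than the paper does.
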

\begin{proof}
Note that in the terminology of Lemma \ref{lemalifts}
the tensor bifunctors are given by $f^\calC_!$ and $f^\calD_!$ for $f$ the unique active morphisms $\langle 2 \rangle \to \langle 1 \rangle$. Since the tensor product of $E$ and $F$ is the left Kan extension of $\otimes^{\calD} \circ (E \times F)$
along $\otimes^\calC$ the claim follows from the universal property of left Kan extension.
\end{proof}

Note that the construction of $\Fun(\calC,\calO)^\otimes$ as a subset of $\hom_{{/\N\Fin}}\left(\calC^\otimes, \calO^\otimes\right)$ 
allows us to immediately deduce certain functorialities. It is on the point set level (i.e. as simplicial sets over $\N\Fin$) contravariantly functorial in the first variable and covariantly in the second. We can now show that these functorialities are compatible with the operad structures.

\begin{corollary}\label{morphism_co}
Let $\calC^\otimes$ be a symmetric monoidal $\infty$-category. 
For every morphism $p: \calO^\otimes \to \calO'^\otimes$ of $\infty$-operads the  morphism $p_*: \Fun(\calC,\calO)^\otimes \to \Fun(\calC,\calO')^\otimes$ induced by post-composition with $p$ is a morphism of $\infty$-operads.
If $\calO$ and $\calO'$ are presentably symmetric monoidal and $p$ is a symmetric monoidal functor that preserves colimits then $p_*$ is also a symmetric monoidal functor.
\end{corollary}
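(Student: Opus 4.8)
The plan is to regard $p_*$ as the map on internal homs $\hom_{/\N\Fin}(\calC^\otimes,\calO^\otimes)\to\hom_{/\N\Fin}(\calC^\otimes,\calO'^\otimes)$ induced by post-composition with $p$; this is manifestly a map of simplicial sets over $\N\Fin$, and the task is to verify that it restricts to a map of the Day convolutions and preserves the appropriate coCartesian edges. First I would check that $p_*$ preserves diagonal form. A functor $F\colon\calC^\otimes_{\langle n\rangle}\to\calO^\otimes_{\langle n\rangle}$ lies in the fibre of $\Fun(\calC,\calO)^\otimes$ precisely when $\rho'\circ F\simeq(F_1\times\cdots\times F_n)\circ\rho$. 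Because $p$ carries the coCartesian lifts of the inert projections $\rho^i$ to coCartesian lifts, the decomposition equivalences of $\calO$ and $\calO'$ intertwine $p_{\langle n\rangle}$ with $\prod_n p_{\langle 1\rangle}$, so $p_{\langle n\rangle}\circ F$ is again diagonal with components $p_{\langle 1\rangle}\circ F_i$. Hence $p_*$ restricts to a map $\Fun(\calC,\calO)^\otimes\to\Fun(\calC,\calO')^\otimes$ over $\N\Fin$ which on fibres is $\prod_n(p_{\langle 1\rangle}\circ-)$ under the product identifications; both source and target are $\infty$-operads by Proposition \ref{propopeee}.

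For the first assertion it then remains to show that $p_*$ preserves inert coCartesian edges. I would argue directly from the product decomposition of the fibres: the inert coCartesian lifts in a Day convolution implement the projections $\prod_n\Fun(\calC,\calO)\to\Fun(\calC,\calO)$ that witness the operad structure, and since $p_*$ is compatible with the product identifications and acts coordinatewise as the fixed functor $p_{\langle 1\rangle}\circ-$, it intertwines these projections for $\calO$ and $\calO'$ and therefore sends inert coCartesian edges to inert coCartesian edges. This uses nothing beyond $p$ being a map of operads, which explains why no colimit or presentability hypothesis enters. Thus $p_*$ is a map of $\infty$-operads.

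For the second assertion, since inert edges are already handled and every coCartesian edge factors into inert and active ones, it suffices to treat an active $f\colon\langle n\rangle\to\langle m\rangle$. Here the hypothesis that $\calO$ and $\calO'$ are presentably symmetric monoidal lets me invoke Lemma \ref{lemalifts}: the coCartesian lift of a diagonal $F$ is the functor $G\simeq\mathrm{Lan}_{f^\calC_!}(f^\calO_!\circ F)$, and similarly for $\calO'$. Post-composing with $p$ I would use the two hypotheses in turn. Since $p$ is symmetric monoidal it preserves all coCartesian lifts, so $p_{\langle m\rangle}\circ f^\calO_!\simeq f^{\calO'}_!\circ p_{\langle n\rangle}$; and since $p$ preserves colimits and the Kan extension along $f^\calC_!$ is computed pointwise as a colimit, post-composition with $p_{\langle m\rangle}$ commutes with it. Combining, $p_{\langle m\rangle}\circ G\simeq\mathrm{Lan}_{f^\calC_!}(f^{\calO'}_!\circ p_{\langle n\rangle}\circ F)$, which is exactly the active coCartesian lift of $p_*F$; hence $p_*$ preserves all coCartesian edges and is symmetric monoidal.

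The step I expect to be the main obstacle is this last commutation over active morphisms, in particular over the binary multiplication $\langle 2\rangle\to\langle 1\rangle$, where $f^\calC_!=\otimes^\calC$ and the lift is the genuine Day convolution of Corollary \ref{univdaybla}. It is precisely here that both hypotheses on $p$ are indispensable: symmetric monoidality to commute $p$ past the coCartesian pushforward $f^\calO_!$, and colimit-preservation to commute it past the pointwise left Kan extension. For inert morphisms both difficulties evaporate because the relevant pushforwards are projections, which is why the operad-map statement holds with no hypothesis on colimits or presentability.
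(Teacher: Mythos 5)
Your proposal is correct and follows essentially the same route as the paper: both use the characterization of coCartesian edges from Lemma \ref{lemalifts}, handle inert morphisms by observing that the pushforwards are product projections (so no hypotheses beyond $p$ being an operad map are needed), and handle the remaining morphisms by noting that a colimit-preserving $p_{\langle m\rangle}$ commutes with the pointwise left Kan extension while symmetric monoidality commutes it past $f^{\calO}_!$. The only cosmetic difference is that you make the preservation of diagonal form and the inert/active factorization explicit, which the paper leaves implicit.
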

\begin{proof}
The only thing that we have to prove is that $p_*$ preserves certain coCartesian lifts, namely those over intert morphisms in $\N\Fin$ in the first case and all coCartesian lifts in the presentably symmetric monoidal case. Thus let let $f$ be a morphism in $\N\Fin$ as in Lemma \ref{lemalifts}. Then for a lift of $f$ we obtain a diagram 
$$
\xymatrix{
\calC^\otimes_{\langle n \rangle} \ar[rr]^{F_1 \times ... \times F_n} \ar[d]^{f_!^\calC} && \calO^\otimes_{\langle n \rangle}\ar[d]^{f_!^\calO} \ar@{=>}[lld]_\eta \ar[rr]^{p_{\langle n \rangle}} &&  \calO'^\otimes_{\langle n \rangle}\ar[d]^{f_!^{\calO'}}  \\
\calC^\otimes_{\langle m \rangle} \ar[rr]^{G_1 \times ... \times G_m}  && \calO^\otimes_{\langle m \rangle} \ar[rr]^{p_{\langle m \rangle}} && {\calO'}^\otimes_{\langle m \rangle}
}
$$
in which the right hand side commutes (in sense that there is a filler cell that is invertible and which is therefore suppressed in the notation) since $p$ 
preserves coCartesian lifts over $f$.
Using the description of coCartesian lifts given in Lemma \ref{lemalifts} we have to show  that the composition cell $p_{\langle m \rangle} \circ \eta$  exhibits the composition $p_{\langle m \rangle} \circ G$ as the left Kan extension of $f_!^{\calO'} \circ p_{\langle n \rangle} \circ F \simeq  p_{\langle m \rangle}  \circ f_!^{\calO} \circ F $ along $f_!^{\calC}$. 

We now disntinguish two cases: first if the morphism $f$ is inert, then all the pushforward functors $f_!$ are product projections. Then $\eta$ exhibits a left Kan extensions precisely if $G$ is equivalent to the appropriate projection of the $F_i's$, i.e.  $G \simeq \prod F_{f(i)}$. In particular $\eta$ is an  equivalence in this case. Similar we see that $p_{\langle m\rangle}$ is also in product form. From this description it is immediate that $p_{\langle m \rangle} \circ \eta$ is an equivalence and exhibits a product projection, hence a left Kan extension. 

The second case that we have to consider is the $f$ is not necessarily inert but $p_{\langle 1 \rangle}$ and hence also $p_{\langle m \rangle}$ preserve colimits. But then it is clear that $p_{\langle m \rangle}$  also preserves left Kan extensions (using the pointwise formulas) and we are done.
\end{proof}

\begin{corollary}\label{morphism_contra}
Let  $q: \calC^\otimes \to \calC'^\otimes$ be an operad morphism between symmetric monoidal $\infty$-categories $\calC$ and $\calC'$, i.e. a lax symmetric monoidal functor. Then  the morphism $q^*: \Fun(\calC',\calO)^\otimes \to \Fun(\calC,\calO)^\otimes$ induced by pre-composition with $q$ is a morphism of $\infty$-operads. 
If $\calO^\otimes$ is presentably symmetric monoidal and $q$ is a symmetric monoidal functor, then $q^*$ admits an operadic left adjoint which is symmetric monoidal.
\end{corollary}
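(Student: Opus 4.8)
The plan is to treat the two assertions separately, mirroring the proof of Corollary~\ref{morphism_co} for the first and combining it with the universal property of Corollary~\ref{univdaybla} for the second. For the first assertion I would begin on the point-set level: precomposition with $q$ sends a map $K\times_{\N\Fin}\calC'^\otimes\to\calO^\otimes$ to $K\times_{\N\Fin}\calC^\otimes\to\calO^\otimes$, hence defines a morphism $q^*\colon\hom_{/\N\Fin}(\calC'^\otimes,\calO^\otimes)\to\hom_{/\N\Fin}(\calC^\otimes,\calO^\otimes)$ over $\N\Fin$. Because $q$ is a map of $\infty$-operads it preserves coCartesian lifts of inert morphisms, so the functors $q_{\langle n\rangle}$ are compatible with the product decompositions, concretely $\rho_{\calC'}\circ q_{\langle n\rangle}\simeq (q_{\langle 1\rangle}\times\cdots\times q_{\langle 1\rangle})\circ\rho_{\calC}$. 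This shows at once that $q^*$ carries diagonal-form functors to diagonal-form functors (with components $F_i\circ q_{\langle1\rangle}$) and therefore restricts to $q^*\colon\Fun(\calC',\calO)^\otimes\to\Fun(\calC,\calO)^\otimes$. It then remains to check that $q^*$ preserves coCartesian lifts over inert $f$, which I would verify with Lemma~\ref{lemalifts} exactly as in Corollary~\ref{morphism_co}: over inert $f$ a coCartesian edge is given by product projections with $\eta$ invertible, and whiskering with $q$ on the left together with the identification $f_!^{\calC'}\circ q_{\langle n\rangle}\simeq q_{\langle m\rangle}\circ f_!^{\calC}$ (valid for inert $f$) keeps everything in product form with $\eta$ invertible, hence coCartesian.

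For the second assertion, assume $\calO^\otimes$ presentably symmetric monoidal, so by Proposition~\ref{propopeee} both $\Fun(\calC,\calO)^\otimes$ and $\Fun(\calC',\calO)^\otimes$ are presentably symmetric monoidal; in particular $\calO$ is cocomplete and the underlying precomposition functor $q^*_{\langle1\rangle}$ admits a left adjoint given by left Kan extension $q_!$ along $q_{\langle1\rangle}$. By the dual of Example~\ref{exampleeins}(1) (the relative-adjunction criterion of \cite[Section 7.3.2]{HA}) the operad map $q^*$ then admits an operadic left adjoint $L$ whose underlying functor is $q_!$. Since $L$ is automatically a map of operads it already preserves coCartesian lifts over inert morphisms, so to see it is symmetric monoidal it suffices to show preservation over the active morphisms; by the decomposition of active maps this reduces to the unit $\langle0\rangle\to\langle1\rangle$ and the binary fold $\langle2\rangle\to\langle1\rangle$, i.e.\ to showing that the oplax structure maps $L(\mathbbm{1})\to\mathbbm{1}$ and $L(E)\otimes L(F)\to L(E\otimes F)$ are equivalences, where $\otimes$ denotes Day convolution.

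The binary case is the heart of the matter, and I would establish it through a chain of natural equivalences, testing against an arbitrary $G'\in\Fun(\calC',\calO)$:
\[
\Map\big(q_!(E\otimes F),G'\big)\simeq\Map\big(E\otimes F,\,q^*G'\big)\simeq\Map\big(\otimes^{\calO}\circ(E\times F),\,q^*G'\circ\otimes^{\calC}\big),
\]
using $q_!\dashv q^*$ and Corollary~\ref{univdaybla}. Because $q$ is strong symmetric monoidal, $q^*G'\circ\otimes^{\calC}\simeq G'\circ\otimes^{\calC'}\circ(q_{\langle1\rangle}\times q_{\langle1\rangle})=(q_{\langle1\rangle}\times q_{\langle1\rangle})^*(G'\circ\otimes^{\calC'})$, so passing to $(q\times q)_!\dashv(q\times q)^*$ rewrites the right-hand mapping space as $\Map\big((q\times q)_!(\otimes^{\calO}\circ(E\times F)),\,G'\circ\otimes^{\calC'}\big)$. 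The key computation is $(q\times q)_!(\otimes^{\calO}\circ(E\times F))\simeq\otimes^{\calO}\circ(q_!E\times q_!F)$, which follows from the pointwise formula for left Kan extension together with the fact that $\otimes^{\calO}$ preserves colimits separately in each variable and that the comma category for $q\times q$ splits as a product. Feeding this back into Corollary~\ref{univdaybla} identifies the last mapping space with $\Map(q_!E\otimes q_!F,G')$, and Yoneda yields $q_!(E\otimes F)\simeq q_!E\otimes q_!F$ naturally; the unit is handled in the same way.

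The main obstacle I expect is precisely this strong-monoidality step: promoting the a priori merely oplax left adjoint $L$ to an honest symmetric monoidal functor, which hinges on the commutation $(q\times q)_!(\otimes^{\calO}\circ(E\times F))\simeq\otimes^{\calO}\circ(q_!E\times q_!F)$ and on carefully threading the several universal properties (Day convolution and Kan extension) so that all identifications are natural in $G'$. A secondary technical point is making the dual of Example~\ref{exampleeins}(1) precise, namely ensuring that the adjunction produced is operadic (its unit lying over the identity of $\N\Fin$) rather than merely an adjunction of underlying $\infty$-categories; this is guaranteed by Lurie's relative-adjunction formalism but should be invoked explicitly.
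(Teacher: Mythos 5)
Your proof is correct and follows essentially the same route as the paper: the first assertion is handled exactly as in Corollary~\ref{morphism_co}, and the second via \cite[Corollary 7.3.2.12]{HA} together with the key computation that left Kan extension along $q\times q$ commutes with the external tensor product (using that $\otimes^{\calO}$ preserves colimits separately in each variable), which you phrase through mapping spaces and Corollary~\ref{univdaybla} where the paper argues directly with the universal property of Kan extensions. The one point to tighten is the logical order in your second paragraph: the ``dual of Example~\ref{exampleeins}(1)'' does not by itself produce an operadic left adjoint that is a map of operads (the left adjoint of a lax symmetric monoidal functor is a priori only oplax monoidal), so the existence of the operadic left adjoint and its symmetric monoidality are established simultaneously by verifying condition (2) of \cite[Corollary 7.3.2.12]{HA} --- which is exactly the verification you then carry out.
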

\begin{proof}
For the first part of the proof we proceed similarly as in the proof of Corollary \ref{morphism_co}. We have to show that $q^*$ preserves coCartesian lifts of inert morphisms.  Thus consider a coCartesian lift in $\Fun(\calC',\calO)^\otimes$ over an inert morphism $f$ in $\Fin$. Following the discussion in the Proof of Corollary \ref{morphism_co} we have a diagram
$$
\xymatrix{
\calC^\otimes_{\langle n \rangle} \ar[rr]^{q_{\langle n \rangle}} \ar[d]^{f_!^{\calC}}&& \calC'^\otimes_{\langle n \rangle} \ar[rr]^{F_1 \times ... \times F_n} \ar[d]^{f_!^{\calC'}} && \calO^\otimes_{\langle n \rangle}\ar[d]^{f_!^\calO}  \\
\calC^\otimes_{\langle m \rangle} \ar[rr]^{q_{\langle m \rangle}}  && \calC'^\otimes_{\langle m \rangle} \ar[rr]^{G_1 \times ... \times G_m}  && \calO^\otimes_{\langle m \rangle}
}
$$
where we do not draw $\eta$ as a filler in the right hand diagram since it is invertible. Then again $q_{\langle n \rangle}$ is in product form as well and as in the proof of Corollary \ref{morphism_co} before we see that the outer diagram is also a left Kan extension.

Thus it remains to show the second part of the claim. We want to use \cite[Corollary 7.3.2.12]{HA}. Thus we first have to show that the underlying functor $q^*_{\langle 1 \rangle}:  \Fun(\calC',\calO)\to \Fun(\calC,\calO)$ admits a left adjoint, but this is clear since it preserves all limits and colimits and since $\calO$ is presentable and therefore the functor categories are presentable as well. This left adjoint is given by left Kan extension and we denote this left adjoint by $q_!$. Then we have to verify condition (2) of \cite[Corollary 7.3.2.12]{HA} which in this case comes down to  the following: 
the evident natural transformations
$$
q_! \circ \otimes^{\Fun(\calC,\calO)} \to \otimes^{\Fun(\calC',\calO)} \circ (q_! \times q_!) \qquad \text{and} \qquad q_!(\mathbbm{1}_{\Fun(\calC,\calO)}) \to \mathbbm{1}_{\Fun(\calC',\calO)}
$$
are equivalences where $\otimes^{\Fun(\calC,\calO)}$ and  $\otimes^{\Fun(\calC',\calO)}$ are the tensor bifunctors in the Day convolution structures and
$\mathbbm{1}_{\Fun(\calC,\calO)}, \mathbbm{1}_{\Fun(\calC',\calO)}$ are the tensor units (the transformations in question are easily described in terms of their adjoints and the lax symmetric monoidal structure of $q^*$). 
Let $F,G: \calC \to \calO$ be functors. Then  $F \otimes^{\Fun(\calC,\calO)} G$ is according to Lemma \ref{lemalifts} given by the left Kan extension 
of $F \otimes^\calO G: \calC \times \calC \to \calO$ along $\otimes^{\calC}: \calC \times \calC \to \calC$. Thus  $q_!(F \otimes^{\Fun(\calC,\calO)}G)$ as the further left Kan extension along $\calC \to \calC'$ can be obtained as the left Kan extension along the composition $q \circ \otimes^\calC$. But this is equivalent to the left Kan extension along $\otimes^{\calC'} \circ (q \times q)$ by the assumption that $q$ is symmetric monoidal. Then finally the claim follows from the fact that the left Kan extension of $F \otimes^\calO G: \calC \times \calC \to \calO$ along $q \times q$ is equivalent to $q_!F \otimes^\calO q_!G$ which follows from the assumption on the tensor product of $\calO$. The case of the unit works similar.
\end{proof}
%
%
%


Note that for a symmetric monoidal $\infty$-category $\calC$ the opposite category $\calC^{op}$ also carries a canonical structure of a symmetric monoidal $\infty$-category. This can be constructed by first straightening $\calC^\otimes \to \N\Fin$ 
to a functor $\N\Fin \to \Cat$, then post composing with $(-)^{op}: \Cat \to \Cat$ and then straightening to a symmetric monoidal $\infty$-category $(\calC^{op})^\otimes \to \N\Fin$ again. One can also give a direct simplicial model, see \cite{BarDualizing}, see also \cite[Appendix A.3]{Knudsen} for a discussion of the functoriality of this construction. 
Thus the functor category $\Fun(\calC^{op}, \calS)$ admits an extension to a presentably symmetric monoidal $\infty$-category $\Fun(\calC^{op},\calS)^\otimes$.  

\begin{proposition}[Glasman {\cite[Section 3]{Glas}}]\label{glassymyoneda}
Let $\calC^\otimes$ be a small symmetric monoidal $\infty$-category. Then the Yoneda embedding $j: \calC \to \Fun(\calC^{op}, \calS)$ admits a canonical  refinement to a symmetric monoidal functor $\calC^\otimes \to \Fun(\calC^{op}, \calS)^\otimes$. 
\end{proposition}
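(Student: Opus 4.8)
The plan is to first produce a lax symmetric monoidal refinement of $j$, that is an operad map $\calC^\otimes \to \Fun(\calC^{op},\calS)^\otimes$, and then to upgrade it to a genuinely symmetric monoidal functor by checking that it carries coCartesian edges over active morphisms to coCartesian edges. Using Lemma \ref{lemalifts} and the description of coCartesian lifts in the Day convolution, this last step reduces to the single identity $j(c_1) \otimes j(c_2) \simeq j(c_1 \otimes c_2)$ (together with the unit compatibility $j(\mathbbm{1}_\calC) \simeq \mathbbm{1}$), where $\otimes$ on the right denotes the Day convolution product and $\mathbbm{1}$ its unit.

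The computational heart of the argument is the verification of this identity, which I would carry out via Corollary \ref{univdaybla}, applied with source $\calC^{op}$ and target $\calS$ (with its cartesian structure $\calS^\times$). For a test object $G \in \Fun(\calC^{op},\calS)$ the corollary gives a natural equivalence $\Map(j(c_1) \otimes j(c_2), G) \simeq \Map_{\Fun(\calC^{op}\times\calC^{op},\calS)}\bigl(\times^\calS \circ (j(c_1) \times j(c_2)),\, G \circ \otimes^{\calC^{op}}\bigr)$. Now $\times^\calS \circ (j(c_1) \times j(c_2))$ is the functor $(a,b) \mapsto \Map_\calC(a,c_1) \times \Map_\calC(b,c_2)$, which is precisely the representable presheaf on $\calC \times \calC$ at the object $(c_1,c_2)$. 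By the ordinary Yoneda lemma the right hand side is therefore $(G \circ \otimes^{\calC^{op}})(c_1,c_2) \simeq G(c_1 \otimes c_2) \simeq \Map(j(c_1 \otimes c_2), G)$. As all of this is natural in $G$, a final application of the Yoneda lemma yields the desired equivalence $j(c_1) \otimes j(c_2) \simeq j(c_1 \otimes c_2)$; the unit statement is obtained identically, the Day convolution unit being the presheaf represented by $\mathbbm{1}_\calC$.

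The genuinely delicate point — and the main obstacle — is not this computation but the coherent construction of the operad map itself, since the equivalences above only pin the functor down at the level of objects. Here I would follow Glasman \cite{Glas} and proceed in one of two ways. The first is to start from the observation that the mapping-space bifunctor $\Map_\calC \colon \calC^{op}\times\calC \to \calS$ is canonically lax symmetric monoidal (for the cartesian structure on $\calS$), and to transport this across the internal-hom adjunction defining $\hom_{/\N\Fin}\bigl((\calC^{op})^\otimes, \calS^\times\bigr)$, currying off the second variable to obtain the operad map $\calC^\otimes \to \Fun(\calC^{op},\calS)^\otimes$; one must here be careful with the variance and with the self-duality of $\otimes$ on $\calC$ versus $\calC^{op}$. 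The second route, which matches the setup already used in the proof of Proposition \ref{limits}, is to invoke \cite[Corollary 4.8.1.12]{HA} to equip $\Fun(\calC^{op},\calS)$ with its universal presentably symmetric monoidal structure carrying a symmetric monoidal Yoneda embedding, and then to identify that structure with the Day convolution: both are presentably symmetric monoidal with tensor preserving colimits separately in each variable, and by the object-level computation above both restrict to the same product on representables, so they agree by the freeness of $\Fun(\calC^{op},\calS)$ as the cocompletion of $\calC$. Either way the remaining coherence bookkeeping is delegated to \cite{Glas}, whose construction supplies the fine details.
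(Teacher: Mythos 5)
The paper does not actually prove this proposition: it is stated as a quotation of Glasman's result and the entire content is delegated to \cite[Section 3]{Glas}, so your proposal supplies strictly more detail than the source it is being compared against. Its computational core is correct: applying Corollary \ref{univdaybla} with source $(\calC^{op})^\otimes$ and target $\calS^\times$, identifying $\times^{\calS}\circ(j(c_1)\times j(c_2))$ with the presheaf on $\calC\times\calC$ represented by $(c_1,c_2)$, and invoking the ordinary Yoneda lemma twice does give $j(c_1)\otimes j(c_2)\simeq j(c_1\otimes c_2)$, and the reduction of ``symmetric monoidal'' to this binary identity together with the unit case is standard. You also correctly isolate the real difficulty, namely the coherent construction of the operad map before one can ask whether its structure maps are equivalences. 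Of your two routes for that step, the first (currying a lax symmetric monoidal structure on $\Map_\calC\colon\calC^{op}\times\calC\to\calS$ across the Day-convolution internal hom) is the one Glasman actually carries out; note that the lax symmetric monoidal structure on the mapping-space bifunctor is itself the nontrivial input there, built from the twisted arrow category, so that is where all the delegated coherence lives. The second route is circular as written: to invoke the universal property of Lurie's $\calP(\calC)^\otimes$ from \cite[Corollary 4.8.1.12]{HA} against the Day convolution you would need a symmetric monoidal functor from $\calC^\otimes$ into the Day convolution, which is exactly what is being constructed, and ``both restrict to the same product on representables'' pins down objects but not coherence data. That route can be repaired --- Lurie's tensor product on presheaves is by construction the left Kan extension of $\otimes^{\calC}$ along $j\times j$, which is literally the description of the Day tensor in Lemma \ref{lemalifts}, so the two structures can be matched directly rather than via a universal property --- but as stated it does not close. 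With the first route as the backbone your argument is sound and is essentially Glasman's.
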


We can now combine this observation with the equivalence $\CAlg(\Fun(\calC^{op}, \calS)) \simeq \Fun_\lax(\calC^{op}, \calS)$ as stated in Proposition \ref{propopeee} to get

\begin{corollary} \label{refyoneda}
For $\calC$ a symmetric monoidal $\infty$-category we have a fully faithful functor 
$j': \CAlg(\calC) \to \Fun_{\lax}(\calC^{op}, \calS)$ that (up to equivalence) refines the ordinary Yoneda embedding $j$.
\end{corollary}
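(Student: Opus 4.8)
The plan is to obtain $j'$ by applying $\CAlg(-)$ to Glasman's symmetric monoidal refinement of the Yoneda embedding and then transporting along Proposition \ref{propopeee}. Proposition \ref{glassymyoneda} gives a symmetric monoidal functor $j^\otimes\colon \calC^\otimes \to \Fun(\calC^{op},\calS)^\otimes$ whose underlying functor is the Yoneda embedding $j$. Since post-composition with a symmetric monoidal functor carries commutative algebras to commutative algebras, $j^\otimes$ induces a functor $\CAlg(j^\otimes)\colon \CAlg(\calC)\to \CAlg(\Fun(\calC^{op},\calS))$. Composing with the equivalence $\CAlg(\Fun(\calC^{op},\calS))\simeq \Fun_\lax(\calC^{op},\calS)$ of Proposition \ref{propopeee} (applied with $\calC$ replaced by $\calC^{op}$ and $\calO=\calS$) produces the functor $j'$.

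That $j'$ refines $j$ I would deduce from the naturality of the forgetful functor $\CAlg(-)\to(-)$ with respect to symmetric monoidal functors: for every symmetric monoidal $F$ the square built from $\CAlg(F)$, $F$, and the two ``underlying object'' functors commutes. Applied to $F=j^\otimes$, this identifies the underlying object of $\CAlg(j^\otimes)(A)$ with $j$ of the underlying object of $A$. It then remains to note that under the equivalence of Proposition \ref{propopeee} the forgetful functor $\CAlg(\Fun(\calC^{op},\calS))\to \Fun(\calC^{op},\calS)$ corresponds to the functor $\Fun_\lax(\calC^{op},\calS)\to \Fun(\calC^{op},\calS)$ which forgets the lax structure; this is immediate from the construction, since the underlying $\infty$-category of the Day convolution is the plain functor category and the underlying object of an algebra is its value at $\langle 1\rangle$. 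Chaining the two commuting squares gives the refinement.

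Fully faithfulness is the heart of the matter, and the strategy is to reduce it to a statement about operads. By the criterion recalled just before Proposition \ref{limits}, a symmetric monoidal functor is fully faithful as a map of $\infty$-operads exactly when its underlying functor is fully faithful; as the latter is here the Yoneda embedding, $j^\otimes$ is a fully faithful map of $\infty$-operads. It therefore suffices to prove the general fact that $\CAlg(-)$ sends a fully faithful map of $\infty$-operads to a fully faithful functor. I would argue this by writing $\CAlg(\calA)$ as the full subcategory of $\Fun_{/\N\Fin}(\N\Fin,\calA^\otimes)$ spanned by the operad maps; then $\CAlg(j^\otimes)$ is the restriction of post-composition with $j^\otimes$. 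Post-composition with a fully faithful functor is fully faithful on functor categories; since $j^\otimes$ lies over $\N\Fin$ this passes to functor categories over $\N\Fin$, and restricting further to the full subcategories of operad maps (into which post-composition with the operad map $j^\otimes$ lands) preserves fully faithfulness.

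The main obstacle is precisely this last general fact, whose proof requires some bookkeeping. One must check that fully faithfulness of post-composition survives the passage from $\Fun(\N\Fin,-)$ to $\Fun_{/\N\Fin}(\N\Fin,-)$; I would handle this by identifying a mapping space in the latter with the fibre of the corresponding mapping space in the former over the identity of the structure map, an identification respected by $j^\otimes$ because it is a functor over $\N\Fin$. One must also check that post-composition with $j^\otimes$ carries operad maps to operad maps, which holds since $j^\otimes$ preserves coCartesian lifts of inert morphisms. Granting this, $\CAlg(j^\otimes)$ is fully faithful, and since $j'$ differs from it only by the equivalence of Proposition \ref{propopeee}, the functor $j'$ is fully faithful as well.
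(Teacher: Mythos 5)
Your proposal is correct and follows essentially the same route as the paper: construct $j'$ from Glasman's symmetric monoidal refinement of the Yoneda embedding together with the equivalence $\CAlg(\Fun(\calC^{op},\calS))\simeq\Fun_\lax(\calC^{op},\calS)$ of Proposition \ref{propopeee}, and deduce fully faithfulness from the fact that $j^\otimes$ is an operadically fully faithful map and that such maps induce fully faithful functors on algebra categories. The only difference is that the paper simply asserts this last general fact, while you supply a (correct) sketch of its proof via $\Fun_{/\N\Fin}(\N\Fin,-)$.
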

\begin{proof}
The only thing that we have to show is fully faithfulness. But this follows since the map $\calC^\otimes \to \Fun(\calC^{op}, \calS)^\otimes$ is fully faithful in the operadic since (i.e. as a functor of total $\infty$-categories) since the underlying functor is fully faithful by the Yoneda lemma and it is symmetric monoidal. For such operad maps all the induced functors on algebra $\infty$-categories are also fully faithful.  
\end{proof}

Note that every presentably symmetric monoidal $\infty$-category $\calD^\otimes$ admits internal mapping objects $\map_{\calD}(a,b) \in \calD$ for $a,b \in \calD$. In particular if $\calD^\otimes$ is presentably symmetric monoidal 
then according to Proposition \ref{propopeee} so is $\Fun(\calC,\calD)^\otimes$ and thus admits internal mapping object. For later use we want to give a concrete description of these internal mapping objects in $\Fun(\calC,\calD)$. It turns out to be most 
conveniently expressed using the language of ends in $\infty$-categories as discussed in \cite{fibrations} and \cite{Glas}. Let us sketch the relevant facts about ends:
\begin{enumerate}
\item
Given a functor $F: \calC^{op} \times \calC \to \calD$ the end $\int_{d \in \calC} F$ also written as $\int_{c \in \calC} F(c,c)$ is the limit over the induced functor 
$\mathrm{TwArr}(\calC) \to \calC^{op} \times \calC \xto{F} D$ where $\mathrm{TwArr}(\calC)$ is the twisted arrow $\infty$-category of $\calC$ as defined in \cite[Section 4.2]{DAG10}.
\item \label{limit} Since the end is defined as a limit is also has the properties of limits, i.e. it preserves other limits and also is preserved by all functors that preserve limits. 
\item 
If $F$ is a functor
$F: \calC^{op} \times \calC \times \calC' \to \calD$ then we can take for every $t \in \calC'$ the end and obtain a functor in $t$ which we write as $t \mapsto \int_{c \in \calC} F(c,c,t)$. To see this we 
consider $F$ as a functor $\calC^{op} \times \calC \to \Fun(\calC', \calD)$ and take the end in the functor category.
\item \label{Fubini} If we have a functor $F: \calC^{op} \times \calC \times \calC'^{op} \times \calC' \to \calD$ then we have the `Fubini' result which is the equivalence
$$
\int_{c\in \calC} \int_{c' \in \calC'} F(c,c,c',c') \simeq \int_{(c, c') \in \calC \times \calC'} F(c,c,c',c') .
$$
This follows from the equivalence $\mathrm{TwArr}(\calC \times \calC') \simeq \mathrm{TwArr}(\calC) \times \mathrm{TwArr}(\calC')$.
\item\label{mappingend}
 The mapping space in a functor category between functors $F,G: \calC \to \calD$ admits a description as an end in the $\infty$-category of spaces as follows
\begin{equation*}
\Map_{\Fun(\calC, \calD)}(F,G) \simeq \int_{c \in \calC} \Map_{\calD}(F(c),G(c))
\end{equation*}
This follows from Proposition 5.1 in \cite{fibrations}
\end{enumerate}

\begin{proposition}\label{propinternal}
Assume that $\calD^\otimes$ is a presentably symmetric monoidal $\infty$-category. Then the internal mapping object in the symmetric monoidal $\infty$-category $\Fun(\calC,\calD)^\otimes$
is given by the formula
$$
\imap_{\Fun(\calC,\calD)}(F,G)(-) \simeq \int_{d \in \calC} \imap_{\calD}\Big(F(d),G(- \otimes d)\Big)
$$
\end{proposition}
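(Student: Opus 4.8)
The plan is to verify that the proposed formula satisfies the defining universal property of an internal mapping object and then to conclude by the Yoneda lemma. Write $H \in \Fun(\calC,\calD)$ for the candidate $H(-) := \int_{d \in \calC} \imap_\calD(F(d), G(- \otimes d))$. This is a well-defined functor in the variable ``$-$'' by the functoriality of ends in an auxiliary variable recorded above, applied to the functor $\calC^{op} \times \calC \times \calC \to \calD$ sending $(d_1, d_2, c) \mapsto \imap_\calD(F(d_1), G(c \otimes d_2))$; the end exists since $\calD$ is presentable and $\calC$ is small. Because $\imap_{\Fun(\calC,\calD)}(F,G)$ is characterized up to equivalence by a natural equivalence $\Map_{\Fun(\calC,\calD)}(E \otimes F, G) \simeq \Map_{\Fun(\calC,\calD)}(E, \imap_{\Fun(\calC,\calD)}(F,G))$ in $E$, it suffices to produce such an equivalence with $H$ in place of $\imap_{\Fun(\calC,\calD)}(F,G)$.

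First I would rewrite the left-hand side via Corollary \ref{univdaybla} as $\Map_{\Fun(\calC \times \calC, \calD)}(\otimes^\calD \circ (E \times F), G \circ \otimes^\calC)$. Applying the end description of mapping spaces in functor categories (point \ref{mappingend}) turns this into $\int_{(c,d) \in \calC \times \calC} \Map_\calD(E(c) \otimes F(d), G(c \otimes d))$, and Fubini (point \ref{Fubini}) rewrites it as the iterated end $\int_{c \in \calC} \int_{d \in \calC} \Map_\calD(E(c) \otimes F(d), G(c \otimes d))$. Next I would apply the tensor--hom adjunction of the closed symmetric monoidal $\infty$-category $\calD$ pointwise to identify the integrand with $\Map_\calD(E(c), \imap_\calD(F(d), G(c \otimes d)))$.

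The key remaining step is to commute the functor $\Map_\calD(E(c), -)$ past the inner end over $d$. Since an end is a limit and $\Map_\calD(E(c), -) : \calD \to \calS$ preserves limits, point \ref{limit} gives $\int_{d \in \calC} \Map_\calD(E(c), \imap_\calD(F(d), G(c \otimes d))) \simeq \Map_\calD(E(c), \int_{d \in \calC} \imap_\calD(F(d), G(c \otimes d))) = \Map_\calD(E(c), H(c))$. Substituting into the outer end and invoking point \ref{mappingend} once more yields $\int_{c \in \calC} \Map_\calD(E(c), H(c)) \simeq \Map_{\Fun(\calC,\calD)}(E, H)$, which is precisely the desired equivalence; the Yoneda lemma then identifies $H$ with $\imap_{\Fun(\calC,\calD)}(F,G)$.

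The main obstacle I anticipate is the bookkeeping of naturality in $E$ across this chain of equivalences: each individual step is natural, but one must confirm this in order to legitimately apply Yoneda. A secondary technical point is checking that all the integrands genuinely assemble into the two-variable (twisted-arrow) functors demanded by the end formalism, so that the Fubini identification and the ``pull a limit-preserving functor through an end'' step hold as stated rather than merely objectwise.
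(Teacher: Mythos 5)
Your proof is correct and follows essentially the same route as the paper's: both establish the universal property by the identical chain of equivalences (Corollary \ref{univdaybla}, the end formula for mapping spaces, Fubini, the tensor--hom adjunction of $\calD$, and commuting the limit-preserving $\Map_\calD(E(c),-)$ past the inner end), merely traversed in the opposite direction. The naturality-in-$E$ concern you flag is handled the same way in the paper, which simply observes that the chain is natural in $E$.
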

\begin{proof}
We want to verify that the functor
$$
c \mapsto \int_{d \in \calC} \imap_{\calD}\Big(F(d),G(c \otimes d)\Big)
$$ 
satisfies the universal property of the internal mapping object. To that end let us compute maps from a further functor $E: \calC \to \calD$ into it:
\begin{align*}
\Map_{\Fun(\calC, \calD)}( E, &\int_{d \in \calC} \imap_{\calD}\Big(F(d),G(- \otimes d)\Big)) &&\\
&\simeq \int_{c \in \calC} \Map_{\calD}\left(E(c), \int_{d \in \calC} \imap_{\calD}\Big(F(d),G(c \otimes d)\Big)\right)  && \text{by }\eqref{mappingend}\\
&\simeq \int_{c \in \calC} \int_{d \in \calC} \Map_\calD\left(E(c), \imap_{\calD}\Big(F(d),G(c \otimes d)\Big)\right) && \text{by } \eqref{limit} \\
& \simeq \int_{(c,d) \in \calC \times \calC} \Map_\calD\big(E(c) \otimes F(d), G(c \otimes d)\big) && \text{by } \eqref{Fubini} \\
&\simeq \Map_{\Fun(\calC \times \calC, \calD )}( \otimes^{\calD} \circ (E \times F), G \circ \otimes^\calC)  && \text{by }\eqref{mappingend} \\
&\simeq \Map_{\Fun(\calC, \calD)}(E \otimes F, G)   && \text{by Cor.}\ref{univdaybla} 
\end{align*}
The chain of equivalences is natural in $E$ which establishes the desired universal property.
\end{proof}

\section{Stable \texorpdfstring{$\infty$}{infinity}-Operads}

Recall that an $\infty$-category $\calC$ is called stable if it is pointed (i.e. it admits an object which is initial and terminal) and a square $\Delta^1 \times \Delta^1 \to \calC$ is a pushout precisely if it is a pullback.
The notion of stability is studied in detail in \cite[Section 1]{HA}. We now want to discuss the operadic analogue:

\begin{definition}\label{defstable}
A symmetric monoidal $\infty$-category $\calC^\otimes$ is called stably symmetric monoidal if the underlying $\infty$-category is stable and the tensor bifunctor
$\otimes: \calC \times \calC \to\calC$ preserves finite colimits separately in each variable (i.e. is exact in each variable). 
A (small) $\infty$-operad $\calO^\otimes$ is stable
if it admits a fully faithful inclusion $i: \calO^\otimes \to \calC^\otimes$ where $\calC^\otimes$ is a (small) stably symmetric monoidal $\infty$-category and such
that the essential image $i(\calO) \subseteq \calC$ is closed under finite limits.  
We let  $\Op^{\St}$ be the full subcategory of $\Op^\Lex$ (see Definition \ref{deflimits}) consisting of the stable $\infty$-operads.
 \end{definition}
 
\begin{example}\label{examples}
The category of spectra with the smash product is stably symmetric monoidal. All categories of modules over a commutative ring spectrum (such as the $\infty$-category of chain complexes) are stably symmetric monoidal. Since this example is large the convention of Remark \ref{remarksize} is applied, i.e. we work one universe higher. 

An example of a stable $\infty$-operad is formed by the suboperad $\Sp^\otimes_{\mathrm{fg}} \subseteq \Sp^\otimes$ of the $\infty$-operad of all spectra spanned by the spectra with finitely generated homotopy groups in every degree. This subcategory if clearly closed under finite limits, but not under smash products (e.g. $H\mathbb{Z} \otimes \mathrm{KU} \simeq \mathrm{KU}_\mathbb{Q}$ is not in $ \Sp_{\mathrm{fg}}$).
\end{example}

\begin{remark}\label{remarkde}
\begin{itemize}
 \item 
Note that for a stable $\infty$-operad $\calO^\otimes \to \N\Fin$ the total $\infty$-category $\calO^\otimes$ is in general not stable as an $\infty$-category. Conversely if the $\infty$-category $\calO^\otimes$ is stable then $\calO^\otimes$ is 
not necessarily stable as an $\infty$-operad. 
\item
One can give a more intrinsic characterization of stability for an operad using the theory of operadic colimits discussed in \cite[Section 3.1.1]{HA}: an $\infty$-operad $\calO^\otimes$ is stable if and only if it 
admits finite operadic colimits for diagrams in $\calO$ and finite operadic limits and its underlying $\infty$-category is stable. We will not need this characterization and therefore avoid the theory of operadic colimits. 
The key to see the equivalence to our definition is to give an 
independent proof of Proposition \ref{proppointad} below using this alternative characterization.  
\item
The underlying $\infty$-category $\calO$ of a stable $\infty$-operad is stable. This follows since $i(\calO) \subseteq \calC$ is a stable subcategory. But in general it is not sufficient that $\calO$ is stable as an $\infty$-category for $\calO^\otimes$ to be stable as an $\infty$-operad. 
However under the assumption that $\calO^\otimes$ admits all 
operadic colimits and limits it is equivalent as noted before. Again we do not prove this here.
\item A symmetric monoidal $\infty$-category $\calC^\otimes$ is stably symmetric monoidal precisely if it is stable as an $\infty$-operad. 
One direction is clear taking the identity as an embedding. For the other direction
assume that $i: \calC^\otimes \to \calC'^\otimes$ is an  fully faithful embedding as in the definition. In particular the underlying functor preserves all finite colimits and limits. 
Then for every finite colimit in $\calC$, which we will write abusively as $\colim c_i$, and objects $c, d \in \calC$ we have the following chain of equivalences
\begin{align*}
\Map_{\calC}(c \otimes_\calC \colim c_i, d) &\simeq \Mul_{\calC}(c,\colim c_i;d) \\
&\simeq \Mul_{\calC'}(i(c),i(\colim c_i);i(d)) \\
&\simeq \Map_{\calC'}(i(c) \otimes_{\calC'} i(\colim c_i);i(d)) \\
&\simeq \Map_{\calC'}(\colim (i(c) \otimes_{\calC'} i(c_i));i(d)) \\
&\simeq \lim \Map_{\calC'}(i(c) \otimes_{\calC'} i(c_i);i(d)) \\
&\simeq \lim \Mul_{\calC'}(i(c),i(c_i);i(d)) \\
&\simeq \lim \Mul_{\calC}(c,c_i;d) \\
&\simeq \lim \Map_{\calC}(c \otimes_{\calC} c_i;d)
\end{align*}
This shows that the canonical cone exhibits $c \otimes \colim c_i$ as the colimit of $c \otimes c_i$ in $\calC$. Therefore the tensor product of $\calC$ preserves colimits separately in the second variable and thus by symmetry in both.
\item 
There is also a definition of stability for $\infty$-operads in \cite[Section 6.2.4]{HA}. This notion differs from ours, in that Lurie calls an $\infty$-operad $\calO^\otimes$ stable if the functor $\calO^\otimes \to \N\Fin$ is locally Cartesian, 
the underlying $\infty$-category
is stable and the associated tensor functor $\otimes^n: \calO^n \to \calO$ (which exists by the  locally Cartesian assumption) is exact in each variable separately. One can show that every such $\infty$-operad is stable in our sense. 
but the converse is not true, i.e. our notion of stability is more general. 
\end{itemize}
\end{remark}

\begin{lemma}\label{pointed}
A small $\infty$-operad $\calO$  is stable if and only if  
it admits a fully faithful inclusion $i: \calO^\otimes \to \calC^\otimes$ where $\calC^\otimes$ is presentably symmetric monoidal, $\calC$ is stable and the essential image $i(\calO) \subseteq \calC$ is closed under finite limits.
\end{lemma}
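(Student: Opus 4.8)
The plan is to follow the proof of Proposition \ref{limits} while keeping track of the extra stability hypothesis, treating the two implications separately. First I would record the elementary observation that a presentably symmetric monoidal $\infty$-category $\calC^\otimes$ whose underlying $\infty$-category $\calC$ is stable is automatically \emph{stably} symmetric monoidal: its tensor bifunctor preserves all colimits in each variable, in particular the finite ones, and is therefore exact in each variable. Hence the presentable witness in the statement is in particular a (large) stably symmetric monoidal $\infty$-category, and the content of the lemma is exactly the comparison between the small stably symmetric monoidal witness of Definition \ref{defstable} and a presentable one, mirroring the comparison of conditions $(2)$ and $(3)$ in Proposition \ref{limits}.

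For the direction asserting that a presentable stable embedding forces stability, I would argue as in the step $(2)\Rightarrow(3)$ of Proposition \ref{limits}. Given $i:\calO^\otimes\to\calC^\otimes$ with $\calC^\otimes$ presentably symmetric monoidal and $\calC$ stable, the inclusion $\calO\to\calC$ factors through the $\kappa$-compact objects $\calC^\kappa$ for some cardinal $\kappa$, since $\calO$ is small. Enlarging $\kappa$ using \cite[5.4.7.4]{HTT} I may assume $\calC^\kappa$ is closed under finite limits, and enlarging it once more that $\calC^\kappa$ is closed under the tensor product. I then check that $(\calC^\kappa)^\otimes$ is small stably symmetric monoidal: it is stable because the $\kappa$-compact objects, being closed under finite colimits and (now) finite limits, form a stable subcategory; and its tensor is exact in each variable since it is the restriction of the colimit-preserving tensor of $\calC$ while finite (co)limits in $\calC^\kappa$ agree with those in $\calC$. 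As $i(\calO)$ is closed under finite limits in $\calC$ and such limits land in $\calC^\kappa$, it is closed under finite limits in $\calC^\kappa$, so the factored embedding $\calO^\otimes\to(\calC^\kappa)^\otimes$ witnesses stability in the sense of Definition \ref{defstable}.

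For the converse I would upgrade the small witness of Definition \ref{defstable} to a presentable one by passing to the Ind-completion, which is the stable replacement for the presheaf construction used in Proposition \ref{limits}. Starting from a fully faithful $i:\calO^\otimes\to\calC^\otimes$ with $\calC^\otimes$ small stably symmetric monoidal and $i(\calO)$ closed under finite limits, I form $\Ind(\calC)^\otimes$: by \cite[Section 4.8.1]{HA} it is presentably symmetric monoidal with the canonical functor $\calC^\otimes\to\Ind(\calC)^\otimes$ a fully faithful symmetric monoidal embedding, and $\Ind(\calC)$ is stable because $\calC$ is. Since $\calC\to\Ind(\calC)$ preserves finite limits, a finite limit of objects of $i(\calO)$ computed in $\Ind(\calC)$ agrees with the one computed in $\calC$ and hence stays in $i(\calO)$; thus $i(\calO)\subseteq\Ind(\calC)$ is closed under finite limits, and the composite $\calO^\otimes\to\Ind(\calC)^\otimes$ is the required embedding.

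The main obstacle I anticipate is the size bookkeeping hidden in both constructions, which is precisely where the equivalence of a small and a presentable witness is bought. In the converse direction one must know that the symmetric monoidal structure on the small stable witness extends to $\Ind(\calC)$ so that $\calC\to\Ind(\calC)$ is simultaneously symmetric monoidal, fully faithful as a map of $\infty$-operads, stable, and finite-limit preserving; in the forward direction the delicate point is the repeated enlargement of $\kappa$ making $\calC^\kappa$ closed under both finite limits and tensor products while remaining a stable symmetric monoidal subcategory. None of these steps is deep, but together they are what the lemma actually asserts beyond Proposition \ref{limits}.
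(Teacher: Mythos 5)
Your proposal is correct and follows essentially the same route as the paper: the paper likewise passes to the $\kappa$-compact objects $\calC^\kappa$ (enlarged so as to be closed under tensor products and finite limits, and stable because closed under finite colimits) for one direction, and forms the symmetric monoidal Ind-completion $\Ind(\calC)^\otimes$ for the other. Your explicit checks — that a presentably symmetric monoidal $\calC^\otimes$ with $\calC$ stable is automatically stably symmetric monoidal, and that $i(\calO)$ remains closed under finite limits inside $\Ind(\calC)$ — are details the paper leaves implicit, and they are verified correctly.
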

\begin{proof}
Assume $\calO$ is stable. By definition of stability we find an operadically fully faithful embedding $i: \calO^\otimes \to \calC^\otimes$ where $\calC^\otimes$ is stably symmetric monoidal. Now we form the Ind-completion 
$\Ind(\calC)$. According to \cite[Proposition 1.1.3.6]{HTT} and \cite[Corollary 4.8.1.13]{HA} it is stable, presentably symmetric monoidal  such that the inclusion $\calC \to \Ind(\calC)$ admits a symmetric monoidal refinement. 
Then $\calO^\otimes \to \calC^\otimes \to \Ind(\calC)^\otimes$ is the desired embedding.

For the converse assume that we have an embedding $\calO^\otimes \to \calC^\otimes$ as described. Then the underlying embedding $\calO \to \calC$ factors through the $\kappa$-compact objects $\calC^\kappa$
for some cardinal $\kappa$ since $\calO$ is small. Enlarging $\kappa$ if necessary we can assume that $\calC^\kappa$ is closed
under tensor products and finite limits. Since $\calC^\kappa$ is closed under colimits it is also stable. Thus $(\calC^\kappa)^\otimes$ can be used to see that $\calO^\otimes$ is stable.
\end{proof}

Now we want to explain how to `stabilize' an $\infty$-operad. Therefore let us first recall the stabilization of $\infty$-categories.

Denote by $\psf$ the $\infty$-category of finite pointed CW complexes. These are by definition precisely the spaces which can be obtained under finite (homotopy) colimits from the point. 
This category admits all finite colimits, but it is not idempotent complete as witnessed by Wall's finiteness obstruction. Let $\calC$ be an $\infty$-category which admits finite limits. 
Then the category of spectrum objects $\Sp(\calC)$ is defined as the full subcategory of
$\Fun(\psf, \calC)$ spanned by the reduced excisive functors. These are the functors $F: \psf \to \calC$ such that $F(*)$ is terminal in $\calC$ and such that $F$ carries pushout squares in $\psf$ to 
pullback squares in $\calC$.  For example we can obtain the $\infty$-category $\Sp$ of spectra as $\Sp(\calS)$. There is a canonical forgetful functor 
$\Omega^\infty: \Sp(\calC) \to \calC$ obtained by evaluation at $S^0$. See \cite[Section 1.4.2]{HA} for a more detailed discussion.

The $\infty$-category $\psf$ admits a symmetric monoidal structure  given by smash product. We denote the resulting $\infty$-operad by $(\psf)^\otimes \to \N\Fin$.  
\begin{construction}
Let $\calO^\otimes$ be an $\infty$-operad that admits finite operadic limits. We denote by $\hom_{{/\N\Fin}}\left((\psf)^\otimes, \calO^\otimes\right)$ the internal hom in simplicial sets over $\N\Fin$ (cf. first paragraph of Section \ref{day}).
In particular the fibre of  $\hom_{{/\N\Fin}}\left((\psf)^\otimes, \calO^\otimes\right)$ over $\langle n \rangle$ is given by the $\infty$-category
$$\Fun\left((\psf)^\otimes_{\langle n \rangle},\calO^\otimes_{\langle n \rangle}\right)$$
We define the simplicial set $\SP(\calO)^\otimes$ over $\N\Fin$ to be the full simplicial subset whose fibre over $\langle n \rangle$ is spanned by those functors
$$
F: (\psf)^\otimes_{\langle n \rangle} \to \calO^\otimes_{\langle n \rangle}
$$
such that under the equivalences 
$$
\rho: (\psf)^\otimes_{\langle n \rangle} \xto{\sim} \prod_n \Fin \qquad \rho': \calO^\otimes_{\langle n \rangle} \xto{\sim} \prod_n \calO
$$
 we have an equivalence of functors $\rho' \circ F \simeq (F_1 \times...\times F_n) \circ \rho$
 where all $F_i: \psf \to \calO$ lie in $\SP(\calO) \subseteq \Fun(\psf,\calO)$ i.e. are reduced and excisive. 
 \end{construction}

It is clear that by construction the underlying $\infty$-category of $\Sp(\calO)^\otimes$ is given by $\Sp(\calO)$. This explains our notation. 

\begin{proposition}\label{pointer_operad}
For every $\infty$-operad $\calO^\otimes$ which admits finite operadic limits the simplicial set $\Sp(\calO)^\otimes \to \N \Fin$ is also an $\infty$-operad which admits finite operadic limits.
For every map $F: \calO^\otimes \to \calO'^\otimes$ in $\Op^\Lex$ the evident map $F_*: \Sp(\calO)^\otimes \to \Sp(\calO)^\otimes$ induced by post-composition with $F$ is also in $\Op^{\Lex}$.
\end{proposition}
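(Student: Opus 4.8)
The plan is to recognise $\Sp(\calO)^\otimes$ as a full suboperad of the Day convolution $\Fun(\psf, \calO)^\otimes$ and to import both the operad structure and the operadic limits from there. By construction $\Sp(\calO)^\otimes$ and $\Fun(\psf, \calO)^\otimes$ are both full simplicial subsets of $\hom_{{/\N\Fin}}\left((\psf)^\otimes, \calO^\otimes\right)$, and since the fibre of $\Sp(\calO)^\otimes$ over $\langle n \rangle$ consists of precisely those diagonal functors all of whose components are reduced and excisive, we obtain an inclusion $\Sp(\calO)^\otimes \subseteq \Fun(\psf, \calO)^\otimes$ as a full simplicial subset. Here $\Fun(\psf, \calO)^\otimes$ is an $\infty$-operad by Proposition \ref{propopeee}, since $(\psf)^\otimes$ is a small symmetric monoidal $\infty$-category.

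First I would check that $\Sp(\calO)^\otimes$ is itself an $\infty$-operad. The map $\Sp(\calO)^\otimes \to \N\Fin$ is an inner fibration: given an inner horn $\Lambda^n_i \to \Sp(\calO)^\otimes$ with compatible $\Delta^n \to \N\Fin$, one lifts along the inner fibration $\Fun(\psf, \calO)^\otimes \to \N\Fin$ and observes that, as all vertices of the horn already lie in the full simplicial subset $\Sp(\calO)^\otimes$, so does the lift. For the coCartesian lifts over inert morphisms I would use the description of coCartesian edges in the Day convolution from Lemma \ref{lemalifts} together with the proof of Corollary \ref{morphism_co}: over an inert $f$ the pushforward $f_!$ is a product projection, so the coCartesian lift of a tuple $(F_1, \ldots, F_n)$ is the corresponding sub-tuple. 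Since a projection of reduced excisive functors is again reduced and excisive, these coCartesian edges have both endpoints in the full subcategory $\Sp(\calO)^\otimes$ and are therefore coCartesian there as well. Finally the Segal condition $\Sp(\calO)^\otimes_{\langle n \rangle} \simeq \prod_n \Sp(\calO)$ holds by the very definition of the construction.

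To produce finite operadic limits I would reduce to the presentable case. Applying Proposition \ref{limits} (with $\calK$ the finite simplicial sets) yields a fully faithful embedding $i \colon \calO^\otimes \to \calD^\otimes$ with $\calD^\otimes$ presentably symmetric monoidal and $i(\calO) \subseteq \calD$ closed under finite limits. Composing the full inclusion $\Sp(\calO)^\otimes \subseteq \Fun(\psf, \calO)^\otimes$ with the identification $\Fun(\psf, \calO)^\otimes \simeq \Fun(\psf, i(\calO))^\otimes \subseteq \Fun(\psf, \calD)^\otimes$ from the proof of Proposition \ref{propopeee} exhibits $\Sp(\calO)^\otimes$ as a full suboperad of $\Fun(\psf, \calD)^\otimes$, whose objects are the reduced excisive functors $\psf \to \calD$ factoring through $i(\calO)$. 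Since finite limits in $\Fun(\psf, \calD)$ are computed objectwise, $i(\calO)$ is closed under finite limits, and the reduced and excisive conditions are preserved by limits, this essential image is closed under finite limits. As $\Fun(\psf, \calD)^\otimes$ is presentably symmetric monoidal by Proposition \ref{propopeee}, Proposition \ref{limits} shows that $\Sp(\calO)^\otimes$ admits finite operadic limits; tracing through the argument, such an operadic limit is computed objectwise, its value at each $x \in \psf$ being an operadic limit in $\calO^\otimes$.

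For the functoriality statement, $F_*$ is a map of $\infty$-operads by Corollary \ref{morphism_co}, and since the underlying functor of $F$ preserves finite limits (as $F \in \Op^\Lex$) it carries reduced excisive functors to reduced excisive ones, so this map restricts to the full suboperads $\Sp(\calO)^\otimes \to \Sp(\calO')^\otimes$. Preservation of finite operadic limits then follows from the objectwise description above: such a limit in $\Sp(\calO)^\otimes$ has, at each $x \in \psf$, an operadic limit of $\calO^\otimes$ as value, and applying the objectwise functor $F$ — which preserves operadic limits since $F \in \Op^\Lex$ — yields a functor that is objectwise an operadic limit in $\calO'^\otimes$, hence an operadic limit in $\Sp(\calO')^\otimes$. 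The step I expect to be the main obstacle is precisely making this objectwise computation rigorous, namely verifying that pointwise-formed operadic limits in $\calO^\otimes$ genuinely assemble into operadic (rather than merely underlying) limits in the Day convolution and that this is compatible with the fully faithful presentable embedding; this is where the multi-mapping-space description of the Day convolution (Lemma \ref{lemalifts}, Corollary \ref{univdaybla}) and the closure argument of Proposition \ref{limits} do the real work, while the remaining verifications are formal.
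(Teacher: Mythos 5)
Your proposal is correct and follows essentially the same route as the paper: realize $\Sp(\calO)^\otimes$ as the full simplicial subset of the Day convolution $\Fun(\psf,\calO)^\otimes$ spanned by products of reduced excisive functors, deduce the operad structure from Proposition \ref{propopeee} and the functoriality from Corollary \ref{morphism_co}, and obtain finite operadic limits by embedding into a presentably symmetric monoidal ambient category via Proposition \ref{limits}. The only cosmetic difference is at the end: where you verify preservation of operadic limits by an objectwise computation, the paper argues more formally that a map of $\infty$-operads which both admits all finite operadic limits on source and target and preserves the underlying limits automatically preserves operadic limits (since operadic limits are in particular underlying limits and limits are unique) — a shortcut that spares you the "main obstacle" you flag.
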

\begin{proof}
By definition the simplicial set $\Sp(\calO)^\otimes$ is a full simplicial subset of the Day convolution $\Fun(\psf, \calO)^\otimes \to \N\Fin$ as described in Definition \ref{definitionday}. 
As a subset it is precisely spanned by the products of reduced excisive functors. In general if $\calP^\otimes$ is an $\infty$-operad and $\calP_0 \subseteq \calP$ is a full simplicial subset then the corresponding full simplicial subset $\calP^\otimes_0 \subseteq \calP^\otimes$ spanned by products of $0$-simplices in $\calP_0$ is again an $\infty$-operad as one easily shows using the definition of $\infty$-operads.
Therefore it follows immediately from the fact that $\Fun(\psf, \calO)^\otimes$ is an $\infty$-operad (as shown in Proposition \ref{propopeee}) that $\Sp(\calO)^\otimes$ is again an $\infty$-operad.

With the same argument we get that the induced morphism $F_*: \Sp(\calO)^\otimes \to \Sp(\calO')^\otimes$ is a morphism of $\infty$-operad from the fact that the functor $\Fun(\psf,\calO)^\otimes \to \Fun(\psf, \calO')^\otimes$
is a map of $\infty$-operads as shown in Corollary \ref{morphism_co}. 

It only remains to see that $\Sp(\calO)^\otimes$ admits finite limits and that the morphism $F_*$ preserves finite limits. For that purpose we embed $i: \calO^\otimes \to \calC^\otimes$ into 
a presentably symmetric monoidal $\infty$-category such that the image is closed under finite limits using Proposition \ref{limits}. Then clearly the induced morphism $i_*: \Sp(\calO)^\otimes \to \Sp(\calC)^\otimes$ is also fully faithful and the essential image
$i_*(\Sp(\calO)) \subseteq \Sp(\calC)$ is closed under finite limits (as the forgetful functor $\Sp(\calO) \to \calO$ preserves all limits). Again from Proposition \ref{limits} we get that $\Sp(\calO)^\otimes$ has finite operadic limits. 
The fact that the  morphism $F_*$ preserves finite operadic limits follows since the underlying morphism $(F_*) _{\langle 1 \rangle}: \Sp(\calO) \to \Sp(\calO')$ of $\infty$-categories preserves limits and since both $\infty$-operads have all finite operadic limits.
\end{proof}

\begin{construction}\label{constrforget}
There is a morphism $\Omega^\infty: \Sp(\calO)^\otimes \to \calO^\otimes$ informally given by evaluation at $S^0$. Concretely we define it as the composition
 $$
 \Sp(\calO)^\otimes \subseteq \hom_{{/\N\Fin}}\left((\psf)^\otimes, \calO^\otimes\right) \xrightarrow{s^*} \hom_{{/\N\Fin}}\left(\N\Fin, \calO^\otimes\right) \cong \calO^\otimes
 $$
 where $s: \N\Fin \to (\psf)^\otimes$ is the section of $(\psf)^\otimes \to \N \Fin$ given by the commutative algebra object $S^0$ in $\psf$. The fact that it is a map of $\infty$-operads 
follows immediately from Corollary \ref{morphism_contra} since it is the restriction of the morphism $\Fun(\psf,\calO)^\otimes \to \Fun(\mathbb{E}_\infty,\calO)^\otimes \cong \calO^\otimes$. Here $\mathbb{E}_\infty^\otimes$ is the $\infty$-operad given by the identity $\N\Fin \to \N\Fin$.
The functor $\Omega^\infty$ also preserves finite operadic limits
since the underlying functor of $\infty$-categories does.
\end{construction}

\begin{corollary}\label{corheute}
The assignment $\calO^\otimes \mapsto \Sp(\calO)^\otimes$ refines to an endofunctor $\Sp: \Op^\Lex \to \Op^\Lex$ with a natural transformation $\Omega^\infty: \Sp \to \id$.
\end{corollary}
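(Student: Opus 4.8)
The assignments underlying the claimed functor are already in place: Proposition~\ref{pointer_operad} supplies the value $\Sp(\calO)^\otimes$ on objects of $\Op^\Lex$ together with the induced map $F_*$ on morphisms, and shows both to lie in $\Op^\Lex$, while Construction~\ref{constrforget} supplies the map $\Omega^\infty\colon \Sp(\calO)^\otimes \to \calO^\otimes$ for each individual $\calO^\otimes$. The actual content of the corollary is therefore the \emph{coherent} functoriality: that these pointwise data assemble into a functor of $\infty$-categories together with a natural transformation, rather than merely an assignment on objects and on individual morphisms. The plan is to exhibit $\Sp(-)^\otimes$ and $\Omega^\infty$ as a strictly functorial, homotopical construction on a point-set model of $\Op^\Lex$, so that they descend to the underlying $\infty$-categories.

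First I would observe that the construction is strictly functorial already at the level of simplicial sets over $\N\Fin$. The assignment $\calO^\otimes \mapsto \hom_{{/\N\Fin}}\big((\psf)^\otimes, \calO^\otimes\big)$ is nothing but the internal-hom functor in $\sSet_{/\N\Fin}$ applied in the second variable; as such it is strictly functorial in $\calO^\otimes$ via post-composition and respects composition on the nose. Passing from $\hom_{{/\N\Fin}}\big((\psf)^\otimes, \calO^\otimes\big)$ to the full simplicial subset $\Sp(\calO)^\otimes$ is compatible with these maps: by the argument of Proposition~\ref{pointer_operad}, which rests on Corollary~\ref{morphism_co}, post-composition with an operad map $F$ carries (products of) reduced excisive functors to (products of) reduced excisive functors, so $F_*$ restricts to a map $\Sp(\calO)^\otimes \to \Sp(\calO')^\otimes$, and these restrictions again compose strictly. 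Hence $\Sp(-)^\otimes$ defines an honest functor from the ordinary category of small $\infty$-operads admitting finite operadic limits and strict operad maps preserving them to itself.

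To descend this to the $\infty$-category $\Op^\Lex$ I would check that the construction is homotopical, i.e.\ sends equivalences to equivalences. This is immediate from the fibrewise definition: an equivalence $F\colon \calO^\otimes \to \calO'^\otimes$ in $\Op^\Lex$ induces over each $\langle n\rangle$ an equivalence of functor categories which restricts to an equivalence on the subcategories of diagonal reduced excisive functors, so $F_*$ is again an equivalence. Since $\Op^\Lex$ is the $\infty$-category underlying the (relative/model-categorical) presentation of $\infty$-operads with finite operadic limits, in which all relevant maps are represented by strict operad maps between fibrant objects, a strictly functorial and homotopical construction of this kind descends to a functor $\Sp\colon \Op^\Lex \to \Op^\Lex$.

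Finally, for the natural transformation I would note that $\Omega^\infty = s^*$ is restriction along the fixed section $s\colon \N\Fin \to (\psf)^\otimes$ determined by the $\mathbb{E}_\infty$-algebra $S^0$. Pre-composition with $s$ and post-composition with any operad map $F$ are applied in different variables of the internal hom and therefore commute strictly, so $\Omega^\infty$ is a strict natural transformation from $\Sp(-)^\otimes$ to $\id$ which, being again homotopical, descends to the desired transformation $\Omega^\infty\colon \Sp \to \id$ of functors $\Op^\Lex \to \Op^\Lex$. I expect the main obstacle to be precisely this descent step: making rigorous that a strictly functorial, weak-equivalence-preserving assignment on the point-set model of $\Op^\Lex$ yields a genuine $\infty$-functor together with a natural transformation, and in particular that the subcategory $\Op^\Lex$—including the condition of preserving finite operadic limits on morphisms—is faithfully captured by the strict model, so that no further coherence data need to be supplied by hand.
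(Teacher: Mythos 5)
Your overall strategy---strictify the construction at the point-set level, check it is compatible with the relevant notion of morphism, and then descend to the $\infty$-category---is the same as the paper's, and your first two paragraphs (strict functoriality of the internal hom in the second variable, restriction to the full simplicial subset of diagonal reduced excisive functors via Proposition~\ref{pointer_operad} and Corollary~\ref{morphism_co}, strict commutation of $s^*$ with post-composition) match the paper's intended input. Where you diverge is in the descent mechanism, and this is exactly the step you flag as the remaining obstacle. You propose to descend by observing that the construction preserves weak equivalences and then invoking a localization of a relative/model-categorical presentation of $\Op^\Lex$; this would additionally require identifying $\Op^\Lex$ (a \emph{non-full} subcategory of $\Op$, cut out by the condition that morphisms preserve finite operadic limits) as the $\infty$-categorical localization of the strict $1$-category at the equivalences, which is plausible but not established anywhere in the paper and is not entirely formal for a non-full subcategory. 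The paper instead sidesteps localization altogether: it enriches the strict $1$-category of operads-with-finite-operadic-limits in Kan complexes, taking as mapping space the maximal Kan complex inside $\Alg^{\Lex}_{\calO}(\calO')$, observes that $\Sp(-)$ and $\Omega^\infty$ are by construction a simplicial functor and a simplicial natural transformation for this enrichment, and uses that $\Op^\Lex$ is the homotopy coherent nerve of that simplicial category. This buys a direct construction of the $\infty$-functor and the transformation with no comparison of localizations needed; the price is the (easy) verification that the construction acts on the enriched mapping spaces, which follows from the same internal-hom adjunction you already use. So your proposal is not wrong, but it stops short at the one step that carries the content of the corollary, and the enrichment argument is the cleaner way to close it.
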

\begin{proof}
By Proposition \ref{pointer_operad} we see that the assignment $\Sp(-)$ determines a functor from the 1-category of $\infty$-operads with finite operadic limits and limit preserving $\infty$-operad maps to itself.
This 1-category admits a simplicial enrichment given for operads $\calO^\otimes$ and $\calO^\otimes$ by the maximal Kan complex contained inside the $\infty$-category $\Alg^\Lex_\calO(\calO')$. 
Then by construction the functor $\Sp(-)$ evidently induces a simplicial functor. But the $\infty$-category $\Op^\Lex$ is the homotopy coherent nerve of that simplicial category. Therefore we get a functor of the associated $\infty$-categories. 
The same arguing shows that the transformation $\Omega^\infty$ which comes from a simplicial transformation exists.
\end{proof}

\begin{proposition}\label{propsymm}
Assume that $\calC^\otimes$ is presentably symmetric monoidal. Then so is $\Sp(\calC)^\otimes$ and the forgetful map $\Omega^\infty: \Sp(\calC)^\otimes \to \calC^\otimes$ 
admits an operadic left adjoint $\Sigma^\infty_+: \calC^\otimes \to \Sp(\calC)^\otimes$ which is symmetric monoidal.
\end{proposition}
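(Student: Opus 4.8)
The plan is to exhibit $\Sp(\calC)$ as a monoidally compatible reflective localization of the Day convolution $\Fun(\psf,\calC)$ and then to assemble $\Sigma^\infty_+$ out of two operadic left adjoints. First I would record that, by Proposition \ref{propopeee}, the Day convolution $\Fun(\psf,\calC)^\otimes$ is presentably symmetric monoidal whenever $\calC^\otimes$ is, and that its internal mapping objects are computed by the end formula of Proposition \ref{propinternal}. Next I would invoke the standard theory of stabilization (reduced excisive approximation) to see that the full subcategory $\Sp(\calC)\subseteq\Fun(\psf,\calC)$ of reduced excisive functors is an accessible reflective localization, with localization functor $L$ given by reduced excisive approximation; here presentability of $\calC$ is used.

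The heart of the argument, and the step I expect to be the main obstacle, is to check that this localization is compatible with the Day convolution in the sense of Example \ref{exampleeins}\eqref{pointudrei}. Concretely I must show that for $G\in\Sp(\calC)$ reduced excisive and $F\in\Fun(\psf,\calC)$ arbitrary, the internal mapping object
\[
\imap_{\Fun(\psf,\calC)}(F,G)(-)\simeq\int_{d\in\psf}\imap_{\calC}\big(F(d),G(-\otimes d)\big)
\]
again lies in $\Sp(\calC)$. Reducedness is immediate: evaluating at the basepoint gives $G(*\otimes d)=G(*)$, which is terminal since $G$ is reduced, so the internal hom and hence the end are terminal. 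For excisiveness I would use that the smash product $-\otimes d$ on $\psf$ preserves pushouts, that $G$ carries pushouts to pullbacks, that $\imap_{\calC}(F(d),-)$ preserves pullbacks as a right adjoint, and finally that the end, being a limit, commutes with pullbacks; chaining these shows the displayed functor sends pushouts to pullbacks.

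Granting this, Example \ref{exampleeins}\eqref{pointeuzwei} yields at once that $\Sp(\calC)^\otimes$ is symmetric monoidal and that the inclusion $i\colon\Sp(\calC)^\otimes\to\Fun(\psf,\calC)^\otimes$ admits a symmetric monoidal operadic left adjoint $L^\otimes$. Presentability then follows formally: $\Sp(\calC)$ is presentable, and since its tensor product is obtained by applying $L$ to the Day convolution tensor product, it preserves colimits separately in each variable because both $L$ (a left adjoint) and the Day convolution tensor product do. Hence $\Sp(\calC)^\otimes$ is presentably symmetric monoidal.

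Finally I would identify $\Sigma^\infty_+$. Writing $\Omega^\infty$ as the composite
\[
\Sp(\calC)^\otimes\xrightarrow{\,i\,}\Fun(\psf,\calC)^\otimes\xrightarrow{\,s^*\,}\calC^\otimes
\]
as in Construction \ref{constrforget}, where $s^*$ is restriction along the section $s$ picking out $S^0$, I note that $s$ is symmetric monoidal since $S^0$ is the tensor unit of $\psf$, so Corollary \ref{morphism_contra} supplies a symmetric monoidal operadic left adjoint $s_!$ to $s^*$. Since $\Omega^\infty=s^*\circ i$ is a composite of operadic right adjoints, its operadic left adjoint is $\Sigma^\infty_+:=L^\otimes\circ s_!$, which is symmetric monoidal as a composite of symmetric monoidal functors, completing the proof.
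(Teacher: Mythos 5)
Your proposal is correct and follows essentially the same route as the paper's proof: presentability of the Day convolution via Proposition \ref{propopeee}, the symmetric monoidal operadic left adjoint to $s^*$ via Corollary \ref{morphism_contra}, and the key step of checking via the end formula of Proposition \ref{propinternal} that the internal hom into a reduced excisive functor is again reduced and excisive, so that the criterion of Example \ref{exampleeins}\eqref{pointudrei} applies. The only difference is that you spell out the presentability of $\Sp(\calC)^\otimes$ and the identification $\Sigma^\infty_+ = L^\otimes \circ s_!$ slightly more explicitly than the paper does, which is harmless.
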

\begin{proof}
By Proposition \ref{propopeee}, the operad $\Fun(\psf,\calC)^\otimes$ is presentably symmetric monoidal and by Corollary \ref{morphism_contra} the map 

$$
s^*: \Fun(\psf,\calC)^\otimes \to \calC^\otimes
$$ 
admits an operadic left adjoint which is symmetric monoidal. We have to show that the composite 
$$
\Omega^\infty: \Sp(\calC)^\otimes \subseteq \Fun(\psf,\calC)^\otimes \xto{s^*} \calC^\otimes
$$ 
admits an operadic left adjoint which is symmetric monoidal. Therefore it suffices to show that the inclusion $\Sp(\calC)^\otimes \to \Fun(\psf,\calC)^\otimes$ does.
The underlying subcategory $\Sp(\calC)_* \subseteq \Fun(\psf,\calC)$ is a 
localizing subcategory. Thus we can use
the criterion established in Example \ref{exampleeins} \eqref{pointudrei}. 
Therefore Let $F,G: \psf \to \calC$ be functors and assume that $G$ is reduced and excisive. 
We have to show that the internal mapping object $\map_{\Fun(\psf,\calC)}(F,G)$ is also reduced and excisive. But according to  
Proposition \ref{propinternal} we have 
\begin{equation}\label{formhom}
\map_{\Fun(\psf,\calC)}(F,G)(c) \simeq  \int_{d \in \psf} \map_\calC(F(d), G(c \wedge d))
\end{equation}
First if $c \in \psf$ is terminal (i.e. a contractible space) then $c \wedge d$ is also terminal and therefore also $G(c \wedge d)$ and $\map_\calC(F(d), G(c \wedge d))$. As ends preserve limits we get from \eqref{formhom} that the functor $\map_{\Fun(\psf,\calC)}(F,G)$ is reduced.  

If we have a pushout square $p: \Delta^1 \times \Delta^1 \to \psf$ then $p \otimes d$ is also a pushout for every $d \in \psf$. Thus $G(p \otimes d)$ is a pullback and hence also $\map_{\calC}(F(d),G(p \otimes d))$. We again use \eqref{formhom} and the fact that ends preserve
pullbacks to deduce that $\map_{\Fun(\psf,\calC)}(F,G)(p)$ is a pullback. This shows finally that $\map_{\Fun(\psf,\calC)}(F,G)$ is excisive and finishes the proof. 
\end{proof}

\begin{corollary}\label{cormor}
For $\calC^\otimes \simeq \calS^\times$ the $\infty$-category of spaces with cartesian product we have an equivalence of $\infty$-operads $\Sp(\calC)^\otimes \simeq \Sp^\otimes$ where $\Sp^\otimes$ is the symmetric monoidal 
$\infty$-category of spectra with smash product.
\end{corollary}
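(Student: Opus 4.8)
The plan is to identify the Day-convolution structure $\Sp(\calS)^\otimes$ produced by Proposition \ref{propsymm} with the standard smash-product structure $\Sp^\otimes$ by appealing to the universal property that characterizes the latter. By Proposition \ref{propsymm} the $\infty$-operad $\Sp(\calS)^\otimes$ is presentably symmetric monoidal, its underlying $\infty$-category is $\Sp(\calS) = \Sp$, and the forgetful functor $\Omega^\infty$ admits a symmetric monoidal left adjoint $\Sigma^\infty_+ \colon \calS^\times \to \Sp(\calS)^\otimes$ whose underlying functor is the usual suspension spectrum functor. In particular the tensor unit of $\Sp(\calS)^\otimes$ is $\Sigma^\infty_+(\ast) \simeq \mathbb{S}$, the sphere spectrum. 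Thus both $\Sp(\calS)^\otimes$ and $\Sp^\otimes$ are presentably symmetric monoidal structures on the same underlying $\infty$-category $\Sp$, each receiving $\Sigma^\infty_+$ as a symmetric monoidal functor out of $\calS^\times$.

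First I would invoke the universal property of the smash product established in \cite{GGN} (see also \cite[Section 4.8.2]{HA}): the functor $\Sigma^\infty_+ \colon \calS^\times \to \Sp^\otimes$ is initial among symmetric monoidal left adjoint functors from $\calS^\times$ into presentably symmetric monoidal stable $\infty$-categories. Since $\Sp(\calS)^\otimes$ is such a target and is equipped with the symmetric monoidal left adjoint $\Sigma^\infty_+$ coming from Proposition \ref{propsymm}, this universal property provides a symmetric monoidal left adjoint $G \colon \Sp^\otimes \to \Sp(\calS)^\otimes$ together with an equivalence $G \circ \Sigma^\infty_+ \simeq \Sigma^\infty_+$ of symmetric monoidal functors out of $\calS^\times$.

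Next I would check that $G$ is an equivalence, and for this it suffices to examine its underlying functor $G \colon \Sp \to \Sp$. This functor preserves colimits and sends the sphere spectrum to $G(\Sigma^\infty_+ \ast) \simeq \Sigma^\infty_+ \ast \simeq \mathbb{S}$. Since $\Sp$ is the free presentably stable $\infty$-category on a single generator, evaluation at $\mathbb{S}$ induces an equivalence $\Fun^{L}(\Sp,\Sp) \simeq \Sp$; as $G$ preserves colimits and fixes $\mathbb{S}$, it is therefore equivalent to the identity functor. A symmetric monoidal functor whose underlying functor is an equivalence is itself a symmetric monoidal equivalence (the inverse inherits a canonical symmetric monoidal structure), so $G$ exhibits the desired equivalence $\Sp^\otimes \simeq \Sp(\calS)^\otimes$ of $\infty$-operads.

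The main subtlety to be careful about is the compatibility of the two unit maps: one must confirm that the symmetric monoidal $\Sigma^\infty_+$ of Proposition \ref{propsymm} really has the standard suspension-spectrum functor as its underlying functor, so that the universal property applies verbatim; this follows because $\Sigma^\infty_+$ is by construction the operadic left adjoint of $\Omega^\infty$, whose underlying adjunction is the standard $(\Sigma^\infty_+, \Omega^\infty)$-adjunction. Alternatively, and perhaps more conceptually, one can bypass the explicit comparison functor $G$ altogether: the object $\Sp \in \mathrm{Pr}^L$ is idempotent for Lurie's tensor product of presentable $\infty$-categories, with idempotent structure map $\Sigma^\infty_+ \colon \calS \to \Sp$, and by \cite[Proposition 4.8.2.9]{HA} an idempotent object carries an essentially unique commutative algebra structure. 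Since both $\Sp(\calS)^\otimes$ and $\Sp^\otimes$ are commutative algebra structures on $\Sp$ in $\mathrm{Pr}^L$ whose unit map is this same $\Sigma^\infty_+$, they must coincide. I expect the verification that Proposition \ref{propsymm} indeed yields $\Sigma^\infty_+$ as unit map, rather than any routine colimit manipulation, to be the only point requiring genuine care.
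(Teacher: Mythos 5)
Your proposal is correct and follows essentially the same route as the paper: the paper likewise applies Proposition \ref{propsymm} to obtain the symmetric monoidal refinement of $\Sigma^\infty_+\colon \calS \to \Sp(\calS)$ and then concludes by the uniqueness of the smash product symmetric monoidal structure, citing \cite[Corollary 4.8.2.19]{HA} and \cite[Theorem 5.1]{GGN}. Your write-up merely unfolds the proof of that uniqueness statement (the comparison functor $G$, the identification $\Fun^{L}(\Sp,\Sp)\simeq\Sp$, or alternatively the idempotence of $\Sp$ in $\mathrm{Pr}^L$), which the paper leaves as a citation.
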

\begin{proof}
  By Proposition \ref{propsymm} we know that the left adjoint $\Sigma^\infty_+: \calS \to \Sp$ admits a symmetric monoidal structure. Then the equivalence with the smash product of spectra follows from the uniqueness of the symmetric monoidal
structure as shown in \cite[Corollary 4.8.2.19]{HA}, see also \cite[Theorem 5.1]{GGN}.
\end{proof}

\begin{thm}\label{proppointad}
For every $\infty$-operad $\calO^\otimes$ which admits finite operadic limits the $\infty$-operad $\Sp(\calO)^\otimes$ is stable.
If $\calO^\otimes$ is stable then the functor $\Omega^\infty: \Sp(\calO)^\otimes \to \calO^\otimes$ is an equivalence of $\infty$-operads.
\end{thm}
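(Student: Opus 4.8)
For every $\infty$-operad $\calO^\otimes$ admitting finite operadic limits, $\Sp(\calO)^\otimes$ is stable; and if $\calO^\otimes$ is itself stable, then $\Omega^\infty: \Sp(\calO)^\otimes \to \calO^\otimes$ is an equivalence of $\infty$-operads.

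Let me think about how to prove this.

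**Part 1: $\Sp(\calO)^\otimes$ is stable.**

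The definition of a stable $\infty$-operad (Definition \ref{defstable}) requires a fully faithful inclusion $\Sp(\calO)^\otimes \to \calC^\otimes$ into a stably symmetric monoidal $\infty$-category with image closed under finite limits. Lemma \ref{pointed} gives an equivalent criterion: we need $\calC^\otimes$ presentably symmetric monoidal, $\calC$ stable, image closed under finite limits.

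So the natural strategy: use Proposition \ref{limits} to embed $\calO^\otimes \hookrightarrow \calC^\otimes$ fully faithfully with $\calC^\otimes$ presentably symmetric monoidal and $i(\calO)$ closed under finite limits. Then apply $\Sp(-)$, getting $\Sp(\calO)^\otimes \to \Sp(\calC)^\otimes$. By Proposition \ref{propsymm}, since $\calC^\otimes$ is presentably symmetric monoidal, $\Sp(\calC)^\otimes$ is also presentably symmetric monoidal. And $\Sp(\calC)$ is stable (this is the classical stabilization of a presentable category—it's stable because $\Sp(\calC) \simeq \Sp(\calC)$ is always a stable $\infty$-category).

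Then I need: (a) $\Sp(\calO)^\otimes \to \Sp(\calC)^\otimes$ is fully faithful, (b) the image $\Sp(\calO) \subseteq \Sp(\calC)$ is closed under finite limits. For (a), fully faithfulness should follow because $i$ is fully faithful and $\Sp(\calO)$, $\Sp(\calC)$ sit inside functor categories where fully faithfulness propagates. For (b), the forgetful $\Omega^\infty$ preserves limits, so limits in $\Sp(\calO)$ are computed objectwise, staying in $\Sp(\calO)$ since $i(\calO)$ is closed under finite limits. This is essentially the argument already used at the end of Proposition \ref{pointer_operad}. Then Lemma \ref{pointed} concludes stability.

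**Part 2: If $\calO^\otimes$ is stable, $\Omega^\infty$ is an equivalence.**

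This is the more interesting part, and I think it's the main obstacle. The intuition: for stable $\calC$, stabilization does nothing, $\Sp(\calC) \simeq \calC$ via $\Omega^\infty$, and this should be upgraded to an operad equivalence.

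The strategy: a map of $\infty$-operads is an equivalence iff it's fully faithful and essentially surjective (on underlying categories, plus bijective on the relevant structure—but for operads, being an equivalence of the total categories over $\N\Fin$ preserving coCartesian inert lifts suffices).

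On underlying $\infty$-categories: $\Omega^\infty: \Sp(\calO) \to \calO$ is an equivalence when $\calO$ is stable. This is the classical statement [HA, 1.4.2.21 or similar]—when $\calC$ is stable, $\Sp(\calC) \to \calC$ is an equivalence. Here $\calO$ is stable (as noted in Remark \ref{remarkde}). So $\Omega^\infty_{\langle 1\rangle}$ is an equivalence.

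But I need the equivalence at the level of operads, meaning on all fibres $\Sp(\calO)^\otimes_{\langle n\rangle} \to \calO^\otimes_{\langle n\rangle}$ AND—crucially—on multi-mapping spaces. Since both $\Sp(\calO)^\otimes$ and $\calO^\otimes$ are $\infty$-operads, the fibres decompose as products: $\Sp(\calO)^\otimes_{\langle n\rangle} \simeq \prod_n \Sp(\calO)$ and $\calO^\otimes_{\langle n\rangle} \simeq \prod_n \calO$, and $\Omega^\infty$ respects this product decomposition (it's defined fibrewise by diagonal functors). So fibrewise equivalence on $\langle 1\rangle$ gives fibrewise equivalence everywhere.

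The heart is the multi-mapping spaces: I must show
$$\Mul_{\Sp(\calO)}(E_1,\ldots,E_n; F) \xrightarrow{\ \sim\ } \Mul_{\calO}(\Omega^\infty E_1,\ldots,\Omega^\infty E_n; \Omega^\infty F).$$

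Here is where I'd use the Day-convolution/end machinery. Embed into presentable $\calC^\otimes$ as in Part 1, where $\calC$ is additionally stable (Lemma \ref{pointed}). In the presentable case I can use Proposition \ref{propsymm}: $\Sp(\calC)^\otimes$ is presentably symmetric monoidal with $\Sigma^\infty_+$ symmetric monoidal left adjoint to $\Omega^\infty$. For presentable stable $\calC$, it's classical that $\Omega^\infty: \Sp(\calC) \to \calC$ is an equivalence of $\infty$-categories; I'd want to promote this to a symmetric monoidal equivalence. The key point: the smash-monoidal structure on $\Sp(\calC)$ is characterized (via Day convolution from the smash product on $\psf$) by a universal property, and when $\calC$ is already stable the unit $\Sigma^\infty_+ \mathbbm{1}_{\psf}$-story degenerates so that $\Omega^\infty$ becomes strong (not merely lax) symmetric monoidal. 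Concretely I'd verify that the lax structure maps $\Omega^\infty E \otimes_\calC \Omega^\infty F \to \Omega^\infty(E \otimes F)$ are equivalences, reducing via the end-formula of Proposition \ref{propinternal} and Corollary \ref{univdaybla} to the statement that tensoring/Kan-extending against the sphere is an equivalence when $\calC$ is stable—i.e. the fact that $S^0 \in \psf$ generates $\Sp$ under (co)limits and that reduced excisive functors into a stable category are determined by their value at $S^0$.

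**The main obstacle.** The hard part is upgrading the underlying categorical equivalence $\Omega^\infty: \Sp(\calO) \simeq \calO$ to an equivalence of multi-mapping spaces. I'd first settle the presentable stable case $\calC$ using the universal property of the Day convolution and the identification (Proposition \ref{propsymm}, Corollary \ref{cormor}) and then transfer back to $\calO^\otimes$ via fully faithfulness of $i_*: \Sp(\calO)^\otimes \hookrightarrow \Sp(\calC)^\otimes$ and $i: \calO^\otimes \hookrightarrow \calC^\otimes$: multi-mapping spaces in $\Sp(\calO)$ are computed as those in $\Sp(\calC)$ (by full faithfulness), which via the presentable case equal multi-mapping spaces in $\calC$, which equal those in $\calO$ (by full faithfulness again). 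Chasing the naturality of $\Omega^\infty$ through both fully faithful embeddings—so that the composite equivalence really is induced by $\Omega^\infty_\calO$—is the delicate bookkeeping step I'd expect to cost the most effort.
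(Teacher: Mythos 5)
Your overall skeleton coincides with the paper's: for both halves you reduce to the presentably symmetric monoidal case by embedding $\calO^\otimes$ fully faithfully into some $\calC^\otimes$ with image closed under finite limits (Proposition \ref{limits} resp.\ Lemma \ref{pointed}), settle that case, and transfer back along the fully faithful maps $\Sp(\calO)^\otimes \to \Sp(\calC)^\otimes$ and $\calO^\otimes \to \calC^\otimes$. Part 1 and the transfer step of Part 2 are essentially identical to the paper's argument.

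Where you diverge is the core of Part 2, the presentable stable case, and there the paper has a decisive shortcut that you miss. You propose to show directly that the lax structure maps $\Omega^\infty E \otimes_\calC \Omega^\infty F \to \Omega^\infty(E \otimes F)$ are equivalences by unwinding the Day convolution via the end formula and the Kan-extension description of the tensor product; this is the hardest step and you leave it as a sketch. The paper instead never touches the lax structure of $\Omega^\infty$: by Proposition \ref{propsymm} the operadic \emph{left} adjoint $\Sigma^\infty_+\colon \calC^\otimes \to \Sp(\calC)^\otimes$ is already known to be \emph{symmetric} monoidal, its underlying functor is an equivalence by \cite[Proposition 1.4.2.21]{HA} since $\calC$ is stable, and a symmetric monoidal functor between symmetric monoidal $\infty$-categories is an equivalence of $\infty$-operads precisely when its underlying functor is one; hence $\Sigma^\infty_+$, and therefore its right adjoint $\Omega^\infty$, is an operadic equivalence. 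This buys exactly the fact you were trying to compute by hand (strong monoidality of $\Omega^\infty$ follows formally once its inverse is strong monoidal), at no extra cost since the monoidality of $\Sigma^\infty_+$ was established en route to constructing the smash product anyway. Your direct route is not wrong in principle, but as written the verification that evaluation at $S^0$ intertwines the Day-convolution tensor product with $\otimes_\calC$ is precisely the content you would still owe, so you should either carry out that computation or, more economically, adopt the adjoint argument.
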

\begin{proof}
To show that $\Sp(\calO)^\otimes$ is stable we first assume that $\calO^\otimes \simeq \calC^\otimes$ is presentably symmetric monoidal (and consequently the underlying $\infty$-category $\calC$ admits all limits). 
Then according to Proposition \ref{propsymm} the $\infty$-operad
$\Sp(\calC)^\otimes$ is presentably symmetric monoidal as well. The underlying $\infty$-category $\Sp(\calC)$ is stable as shown in \cite[Corollary 1.4.2.17]{HA}. Thus we conclude that $\Sp(\calC)^\otimes$ 
is stably symmetric monoidal and therefore also stable as an $\infty$-operad.

For general $\calO^\otimes$ we find  a finite limit preserving, fully faithful embedding $\calO^\otimes \to \calC^\otimes$ into a presentably symmetric monoidal $\infty$-category $\calC^\otimes$ 
using Proposition \ref{limits}.
Then $\Sp(\calO)^\otimes\to \Sp(\calC)^\otimes$ is also fully faithful which follows immediately from the construction.  We conclude using Lemma \ref{pointed} that $\Sp(\calO)^\otimes$ is stable as well.
This shows the first part of the proposition. 

We now want to show that for a stable $\infty$-operad $\calO^\otimes$ the morphism $\Omega^\infty: \Sp(\calO)^\otimes \to \calO^\otimes$ is an equivalence of $\infty$-operads. 
We assume first that $\calO^\otimes \simeq \calC^\otimes$ is presentably symmetric monoidal. Then according to \ref{propsymm} we find that 
$$
\Omega^\infty: \Sp(\calC)^\otimes \to \calC^\otimes
$$ 
admits an operadic left adjoint $\Sigma^\infty_+: \calC^\otimes \to \Sp(\calC)^\otimes$ which is symmetric monoidal. 
In \cite[Proposition 1.4.2.21]{HA} it is shown that
the underlying functor $\Omega^\infty:\Sp(\calC) \to \calC$ and  $\Sigma^\infty_+: \calC \to \Sp(\calC)$ are inverse equivalences (since $\calC$ is stable).  But a symmetric monoidal functor between symmetric monoidal $\infty$-categories is an 
equivalence of $\infty$-operads precisely if 
the underlying functor of $\infty$-categories is. Thus $\Sigma^\infty_+: \calC^\otimes \to \Sp(\calC)^\otimes$ is an equivalence of $\infty$-operads. Therefore also $\Omega^\infty$ (as the right adjoint) is an equivalence of $\infty$-operads. 

Finally for a general stable $\infty$-operad $\calO$ choose an embedding into a presentably symmetric monoidal $\infty$-operad $\calC^\otimes$ according to Lemma \ref{pointed}. We get a commutative square
$$
\xymatrix{
\Sp(\calO^\otimes) \ar[r] \ar[d]^{\Omega^\infty} & \Sp(\calC)^\otimes\ar[d]^{{\Omega}^\infty} \\
\calO^\otimes \ar[r] & \calC^\otimes
}$$
in which the right hand morphism ${\Omega}^\infty$ is an equivalence of $\infty$-operads and the underlying morphism of the left hand morphism  $\Omega^\infty$ is an equivalence of $\infty$-categories. 
Since the horizontal morphisms are both fully faithful this implies that the left hand morphism $\Omega^\infty$ is essentially surjective and 
operadically fully faithful. But this implies that it is an  equivalence of $\infty$-operads.
\end{proof}

\begin{corollary}
The endofunctor $\Sp: \Op^\Lex \to \Op^\Lex$ together with the transformation $\Omega^\infty: \Sp \to \id$ is a colocalization onto the full subcategory of stable $\infty$-operads. 
\end{corollary}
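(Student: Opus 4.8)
The plan is to deduce the statement formally from Theorem \ref{proppointad} together with Corollary \ref{corheute}, using the dual of the standard recognition criterion for (co)localization functors \cite[Proposition 5.2.7.4]{HTT}. Dualized and applied to the endofunctor $\Sp\colon \Op^\Lex \to \Op^\Lex$ equipped with the transformation $\Omega^\infty\colon \Sp \to \id$, this criterion asserts that $(\Sp,\Omega^\infty)$ is a colocalization onto its essential image as soon as, for every $\calO^\otimes$, the two morphisms $\Omega^\infty_{\Sp(\calO)}$ and $\Sp(\Omega^\infty_\calO)$ from $\Sp(\Sp(\calO))^\otimes$ to $\Sp(\calO)^\otimes$ are equivalences of $\infty$-operads. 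First I would record that the essential image is exactly $\Op^\St$: it is contained in $\Op^\St$ by the first part of Theorem \ref{proppointad}, and conversely every stable $\calD^\otimes$ lies in it because $\Omega^\infty_\calD\colon \Sp(\calD)^\otimes \to \calD^\otimes$ is an equivalence by the second part of the same theorem, so that $\calD^\otimes \simeq \Sp(\calD)^\otimes$.

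It then remains to verify the two equivalences. The morphism $\Omega^\infty_{\Sp(\calO)}$ is immediate: $\Sp(\calO)^\otimes$ is stable by Theorem \ref{proppointad}, so the second part of that theorem applies to $\Sp(\calO)^\otimes$ and shows $\Omega^\infty_{\Sp(\calO)}$ is an equivalence. The genuinely delicate point, which I expect to be the main obstacle, is the second equivalence $\Sp(\Omega^\infty_\calO)$. Its underlying functor of $\infty$-categories is an equivalence by the classical colocalization statement for stabilization of $\infty$-categories \cite[Section 1.4.2]{HA}; but this does \emph{not} by itself force an equivalence of $\infty$-operads, since an operad map between (even stable) symmetric monoidal $\infty$-categories whose underlying functor is an equivalence need not be fully faithful on multi-mapping spaces unless it is strong monoidal, and $\Omega^\infty$ is only lax. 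A naturality-square argument for $\Omega^\infty$ is likewise insufficient, because it only compares $\Sp(\Omega^\infty_\calO)$ and $\Omega^\infty_{\Sp(\calO)}$ after postcomposition with the non-invertible $\Omega^\infty_\calO$.

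To handle this I would exploit the symmetry of the smash product. Modelling the double stabilization $\Sp(\Sp(\calO))^\otimes$ as a full suboperad of the Day convolution of $\calO^\otimes$ along the product monoidal structure on $\psf \times \psf$, the exchange of the two smash coordinates is a symmetric monoidal self-equivalence $\sigma$ of $(\psf \times \psf)^\otimes$, and hence by Corollary \ref{morphism_contra} precomposition $\tau := \sigma^*$ restricts to an equivalence of $\infty$-operads $\tau\colon \Sp(\Sp(\calO))^\otimes \to \Sp(\Sp(\calO))^\otimes$. Now $\Omega^\infty_{\Sp(\calO)}$ is restriction along the section $S^0$ in the outer coordinate, while $\Sp(\Omega^\infty_\calO)$, being postcomposition with $\Omega^\infty_\calO$, is restriction along $S^0$ in the inner coordinate; since $\sigma$ exchanges these two $S^0$-sections, passing to restriction functors yields an equivalence $\Sp(\Omega^\infty_\calO) \simeq \Omega^\infty_{\Sp(\calO)} \circ \tau$ (the direction of $\tau$ being irrelevant as it is an involution). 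As a composite of two equivalences of $\infty$-operads, $\Sp(\Omega^\infty_\calO)$ is therefore an equivalence.

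With both conditions verified, the dual of \cite[Proposition 5.2.7.4]{HTT} shows that $\Sp$ together with $\Omega^\infty$ exhibits a colocalization onto its essential image, which we identified with the full subcategory $\Op^\St$ of stable $\infty$-operads. The one step requiring real care is the identification $\Sp(\Omega^\infty_\calO) \simeq \Omega^\infty_{\Sp(\calO)} \circ \tau$ as a homotopy between operad maps, rather than merely an equivalence of underlying objects; establishing it cleanly is exactly where the explicit Day-convolution model and the symmetry of $\wedge$ enter, and it is the part of the argument I would write out in full detail.
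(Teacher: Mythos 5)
Your proof is correct, but it is far more detailed than what the paper actually writes: the paper's entire proof of this corollary is the single sentence ``This follows immediately from the last Theorem \ref{proppointad}.'' So the comparison is really between your careful verification of the colocalization criterion and the paper's implicit claim that no such verification is needed. You are right that the criterion of \cite[Proposition 5.2.7.4]{HTT} (dualized) demands \emph{two} equivalences, that $\Omega^\infty_{\Sp(\calO)}$ is the easy one (Theorem \ref{proppointad} applied to the stable operad $\Sp(\calO)^\otimes$), and that $\Sp(\Omega^\infty_\calO)$ does not follow formally: an operad map between stable $\infty$-operads whose underlying functor is an equivalence need not be an operadic equivalence, and the naturality square for $\Omega^\infty$ only identifies the two maps after postcomposition with the non-invertible $\Omega^\infty_\calO$. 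Your resolution --- identifying $\Sp(\Sp(\calO))^\otimes$ inside the Day convolution along $(\psf\times\psf)^\otimes$ via currying of $\hom_{/\N\Fin}$, noting that the bi-reduced-bi-excisive condition is symmetric, and using the swap $\sigma$ together with Corollary \ref{morphism_contra} to get $\Sp(\Omega^\infty_\calO)\simeq \Omega^\infty_{\Sp(\calO)}\circ\tau$ on the nose (both are restrictions along the two $S^0$-sections, which $\sigma$ exchanges) --- is correct and is the standard device for comparing the two counits of double stabilization. What the paper's approach ``buys'' is brevity, at the cost of leaving exactly this point unaddressed; the same elision reappears in the proof of Corollary \ref{corimo}, where ``it is clear that the functor is an inverse'' again tacitly uses that $\Sp(\Omega^\infty_\calO)$ agrees with $\Omega^\infty_{\Sp(\calO)}$ up to equivalence. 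What your approach buys is an actual argument for that step; if you write it up, the only place needing real care is the one you already flagged, namely checking the currying isomorphism of simplicial sets over $\N\Fin$ and that $\sigma^*$ preserves the full suboperad $\Sp(\Sp(\calO))^\otimes$ (which holds because pullbacks in $\Sp(\calO)\subseteq\Fun(\psf,\calO)$ are computed pointwise, so bi-excisivity is a symmetric condition).
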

\begin{proof}
The follows immediately from the last Theorem \ref{proppointad}.
\end{proof}

\begin{corollary}\label{corimo}
For every stable $\infty$-operad $\calO^\otimes$ the functor
$$
\Alg^\Lex_{\calO}\left(\Sp\right) \xrightarrow{\Omega^\infty} \Alg^\Lex_{\calO}\left(\calS\right)
$$
induced by the forgetful map $\Omega^\infty: \Sp^\otimes \to \calS^\times$ is an equivalence of $\infty$-categories.
\end{corollary}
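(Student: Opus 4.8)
The plan is to reduce the statement to the general universal property of the stabilization functor and then invoke the explicit identification of $\Sp(\calS)^\otimes$ with $\Sp^\otimes$. First I would use Corollary~\ref{cormor} to rewrite the target: since $\calS^\times$ lies in $\Op^\Lex$ (spaces admit finite limits) and $\Sp(\calS)^\otimes \simeq \Sp^\otimes$ as $\infty$-operads, the forgetful map $\Omega^\infty\colon \Sp^\otimes \to \calS^\times$ is identified with the counit $\Omega^\infty\colon \Sp(\calS)^\otimes \to \calS^\times$ of Construction~\ref{constrforget}. Hence it suffices to prove the following general assertion and then specialize to $\calP^\otimes = \calS^\times$: for every stable $\infty$-operad $\calO^\otimes$ and every $\calP^\otimes \in \Op^\Lex$, post-composition with $\Omega^\infty\colon \Sp(\calP)^\otimes \to \calP^\otimes$ induces an equivalence
$$
\Omega^\infty_*\colon \Alg^\Lex_\calO\big(\Sp(\calP)\big) \xto{\ \simeq\ } \Alg^\Lex_\calO(\calP).
$$
This is the hom-$\infty$-category refinement of the colocalization established just above, whose key input is the equivalence $\Omega^\infty_\calO\colon \Sp(\calO)^\otimes \xto{\simeq} \calO^\otimes$ from Theorem~\ref{proppointad} (valid because $\calO$ is stable).

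To produce an inverse I would use the functoriality of $\Sp$ from Corollary~\ref{corheute}. Since $\Sp(F)$ is defined by post-composition with $F$ in the Day convolution (Proposition~\ref{pointer_operad}), it acts not only on operad maps but also on natural transformations between them; whiskering thus upgrades $\Sp$ to a functor on hom-$\infty$-categories $\Sp\colon \Alg^\Lex_\calO(\calP) \to \Alg^\Lex_{\Sp(\calO)}\big(\Sp(\calP)\big)$. Composing with precomposition along the inverse equivalence $(\Omega^\infty_\calO)^{-1}\colon \calO^\otimes \xto{\simeq} \Sp(\calO)^\otimes$ yields a candidate inverse
$$
\Psi\colon \Alg^\Lex_\calO(\calP) \to \Alg^\Lex_\calO\big(\Sp(\calP)\big), \qquad \Psi(\phi) \simeq \Sp(\phi)\circ (\Omega^\infty_\calO)^{-1}.
$$

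It then remains to identify the two round-trip composites with the identity. The composite $\Omega^\infty_* \circ \Psi$ sends $\phi$ to $\Omega^\infty_\calP \circ \Sp(\phi) \circ (\Omega^\infty_\calO)^{-1}$, and naturality of $\Omega^\infty\colon \Sp \to \id$ (Corollary~\ref{corheute}) gives $\Omega^\infty_\calP \circ \Sp(\phi) \simeq \phi \circ \Omega^\infty_\calO$, whence the composite is equivalent to $\phi$. For $\Psi \circ \Omega^\infty_*$ one is led to $\Sp(\Omega^\infty_\calP) \circ \Sp(\psi) \circ (\Omega^\infty_\calO)^{-1}$; here I would use that $\Sp(\calP)^\otimes$ is stable (Theorem~\ref{proppointad}), so that $\Omega^\infty_{\Sp(\calP)}$ is an equivalence, together with the idempotency identity $\Sp(\Omega^\infty_\calP) \simeq \Omega^\infty_{\Sp(\calP)}$ for the comonad $\Sp$, and then naturality of $\Omega^\infty$ applied to $\psi$ to reduce the composite to $\psi$. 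The main obstacle I anticipate is precisely this bookkeeping: ensuring that $\Sp$ and $\Omega^\infty$ act coherently on the full hom-$\infty$-categories, rather than merely on their cores as in the simplicial enrichment used for Corollary~\ref{corheute}, so that $\Psi$ and the displayed comparisons are genuine functors and natural equivalences and not only object-level statements. This I would handle by observing that all the operations in sight are post-composition and whiskering in the Day convolution, which are manifestly compatible with $2$-cells.
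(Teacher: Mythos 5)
Your proposal is correct and follows essentially the same route as the paper: the paper's proof also constructs the inverse by applying the functor $\Sp(-)$ to a left exact operad map $\calO^\otimes \to \calS^\times$ (via the functoriality of Proposition \ref{pointer_operad}/Corollary \ref{corheute}), identifies the target $\Alg^\Lex_{\Sp(\calO)}(\Sp)$ with $\Alg^\Lex_{\calO}(\Sp)$ using the equivalence $\Sp(\calO)^\otimes \simeq \calO^\otimes$ from Theorem \ref{proppointad}, and leaves the round-trip verification (which you spell out via naturality of $\Omega^\infty$) as "clear." Your extra care about $\Sp$ acting on full hom-$\infty$-categories rather than their cores is a reasonable point, and is resolved exactly as you say, since $\Sp(-)$ is given on the nose by post-composition in the internal hom over $\N\Fin$.
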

\begin{proof}
Using the functoriality statement in Proposition \ref{pointer_operad} we get a functor
$$
\Alg^\Lex_{\calO}\left(\calS\right) \to \Alg^\Lex_{\Sp(\calO)}\left(\Sp\right)
$$
Since $\calO^\otimes$ is stable the target of that functor is equivalent to $\Alg^\Lex_{\calO}\left(\Sp\right)$ and it is clear that the functor is an inverse to the one in question.
\end{proof}

\begin{remark}\label{Lurie}
Our stablilization construction $\calO^\otimes \mapsto \Sp(\calO)^\otimes$ is on the level of simplicial sets isomorphic to the construction given in \cite[Construction 6.2.5.20]{HA}. 
Lurie shows under a stronger assumption than we have (namely differentiablity \cite[Definition 6.2.4.11]{HA}), that this gives a stabilization for his notion of stable $\infty$-operads which is special case of ours. He also proves similar results to ours: 
for example \cite[Proposition 6.2.4.15]{HA} is the direct analogue of Corollary \ref{corimo} and \cite[Example 6.2.4.17]{HA} is the direct analogue of Corollary \ref{cormor}. 
In particular we see a posteriori that for an $\infty$-operad which is differentiable the $\infty$-operad $\Sp(\calO)^\otimes$ is stable in the sense of Lurie. 

We will give in the next section a treatment for the case of (pre)additive and pointed operads that is completely parallel to the treatment in this section. We are not aware of a treatment of these cases along the lines of Lurie's strategy.
\end{remark}

\section{Pointed and (Pre)additive  \texorpdfstring{$\infty$}{infinity}-Operads}\label{secbla}

In this section we want to establish a picture similar to the one discussed for stable $\infty$-operads, but for (pre)additive and pointed $\infty$-operads. 
Since most of the proofs and result are analogous we try to only mention the crucial differences and not repeat everything.

Recall that an $\infty$-category $\calC$ is called pointed if 
it admits a zero object, that is an object $0 \in \calC$ which is initial and terminal. A pointed $\infty$-category $\calC$ is called preadditive if 
for every pair of objects $a,b \in \calC$ the canonical morphism 
$$
\begin{psmallmatrix} 
\id & 0 \\ 0 & \id 
\end{psmallmatrix} :~
a \sqcup b \to a \times b
$$
is an equivalence. In this case we write $a \oplus b$ for the biproduct. 
A preadditive $\infty$-category is called additive if for every object $a \in \calC$ the shear map 
$$
\begin{psmallmatrix} 
\id & \id \\ 0 & \id 
\end{psmallmatrix} :~
a \oplus a \to a \oplus a
$$ 
is an equivalence. For example every stable $\infty$-category is additive and in particular also preaddtive and pointed. See \cite[Section 2]{GGN} for a more detailed discussion. 

\begin{definition}
A symmetric monoidal $\infty$-category $\calC^\otimes$ is called pointed if the underlying $\infty$-category is pointed and the tensor bifunctor
$\otimes: \calC \times \calC \to\calC$ has the property that $c \otimes 0 \simeq 0 \simeq 0 \otimes c$ for every $c \in \calC$. It is called (pre)additive if the underlying $\infty$-category is (pre)additive
and the tensor bifunctor preserves coproducts separately in each variable.

An $\infty$-operad $\calO^\otimes$ is called pointed 
if it admits a fully faithful inclusion $i: \calO^\otimes \to \calC^\otimes$ where $\calC^\otimes$ is a pointed symmetric monoidal $\infty$-category and the
essential image $i(\calO) \subseteq \calC$ contains the zero object. It is called (pre)additive if $\calC^\otimes$ can be chosen to be (pre)additive and such that the essential
image $i(\calO) \subseteq \calC$ is closed under finite products.  

We define $\Op^\pt$ to be the full subcategory of $\Op^*$ consisting of the pointed $\infty$-operads and $\Op^{\Pre}$ and $\Op^{\Add}$ to be be the full subcategories of $\Op^\Pi$ consisting of the preadditive and additive $\infty$-operads (see Definition \ref{deflimits} for the Definition of $\Op^*$ and  $\Op^\Pi$). 
 \end{definition}
 

\begin{example}
Since every stable $\infty$-operad is pointed and additive, the examples mentioned in Example \ref{examples} are also examples here, in particular the symmetric monoidal $\infty$-categories of spectra and chain complexes are pointed and additive. 
An example of an additive symmetric monoidal $\infty$-category is the $\infty$-category of connective spectra with the smash product. 
The ordinary category of abelian groups with tensor product is an additive symmetric monoidal $\infty$-category. The category of abelian monoids with the tensor product of abelian monoids is a preadditive but not additive symmetric monoidal
$\infty$-category. The category of pointed sets and the $\infty$-category of pointed spaces with the smash product are examples of pointed symmetric monoidal $\infty$-categories.
\end{example}

\begin{remark}
The obvious analogues of Remark \ref{remarkde} are true. For example one can characterize preadditivity as the existence of operadic biproducts and pointed as the existence of an operadic zero object.
 Also whether a symmetric monoidal $\infty$-category is (pre)additive or pointed as an operad or 
as a symmetric monoidal $\infty$-category is the same. 
Note also that being pointed for an $\infty$-operad is not same as unital, which means that the $\infty$-category $\calO^\otimes$ is pointed as discussed in \cite[Section 2.3.1]{HA}.
\end{remark}
\begin{lemma}
A small $\infty$-operad $\calO$  is pointed (resp. (pre)additive) if and only if  
it admits a fully faithful inclusion $i: \calO^\otimes \to \calC^\otimes$ where $\calC^\otimes$ is presentably symmetric monoidal, $\calC$ is pointed (resp. (pre)additive) and the essential image $i(\calO) \subseteq \calC$ contains the zero objects (resp. is closed under finite limits).
\end{lemma}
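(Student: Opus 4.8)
The plan is to mirror the proof of Lemma \ref{pointed} almost verbatim, replacing ``stable'' by ``pointed (resp. (pre)additive)'' throughout, and to treat the two implications completely in parallel. Both directions are formal once one knows that pointedness, preadditivity and additivity are preserved by the two operations used in the stable case, namely the Ind-completion $\Ind(-)$ and the passage to $\kappa$-compact objects $(-)^\kappa$. So the essential task is to supply these two stability statements; everything else is the same bookkeeping as in Lemma \ref{pointed}.

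For the implication from the Definition to the Lemma I would start from a fully faithful operad map $i\colon \calO^\otimes \to \calC^\otimes$ with $\calC^\otimes$ a (not necessarily presentable) pointed (resp. (pre)additive) symmetric monoidal $\infty$-category, as furnished by the Definition, and pass to $\Ind(\calC)$. By \cite[Corollary 4.8.1.13]{HA} the $\infty$-category $\Ind(\calC)$ is presentably symmetric monoidal and the canonical inclusion $\calC \to \Ind(\calC)$ admits a fully faithful symmetric monoidal refinement. The inclusion preserves all limits, and hence sends the zero object of $\calC$ to a terminal object of $\Ind(\calC)$, which one checks is also initial by evaluating mapping spaces on representables; thus $\Ind(\calC)$ is pointed. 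In the (pre)additive case one argues further that the biproduct of $\calC$ extends to $\Ind(\calC)$ via the formula $\colim_{i,j}(x_i\oplus y_j)$ on filtered colimits of representables, which is simultaneously the coproduct (coproducts commute with filtered colimits) and the product (finite products commute with filtered colimits in the base $\calS$), so that $\Ind(\calC)$ is again (pre)additive, and the shear maps remain equivalences in the additive case. The composite $\calO^\otimes \to \calC^\otimes \to \Ind(\calC)^\otimes$ is then a fully faithful map into a presentably symmetric monoidal category of the required flavour, and since $\calC \to \Ind(\calC)$ preserves and detects the zero object and finite products, the essential-image condition is inherited.

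For the converse I would run the argument of Lemma \ref{pointed} in reverse. Given a fully faithful $i\colon \calO^\otimes \to \calC^\otimes$ into a presentably symmetric monoidal, pointed (resp. (pre)additive) $\calC^\otimes$ with the stated image condition, the underlying embedding factors through $\calC^\kappa$ for some cardinal $\kappa$ because $\calO$ is small. Enlarging $\kappa$ by \cite[5.4.7.4]{HTT} I would arrange that $\calC^\kappa$ is closed under finite limits and under the tensor product, so that $(\calC^\kappa)^\otimes$ is a small symmetric monoidal $\infty$-category. The point is that $\calC^\kappa$ inherits the relevant structure: the zero object is $\kappa$-compact, finite coproducts of $\kappa$-compact objects are $\kappa$-compact, and in the (pre)additive case finite products coincide with finite coproducts, so $\calC^\kappa$ is closed under biproducts and the shear maps remain equivalences. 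Hence $(\calC^\kappa)^\otimes$ is pointed (resp. (pre)additive) and exhibits $\calO^\otimes$ as such in the sense of the Definition.

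I expect the single genuine obstacle to be the claim that $\Ind(-)$ preserves preadditivity and additivity, i.e. that the biproduct survives free filtered cocompletion. Preservation of the zero object, and the verification that the comparison map $a \sqcup b \to a \times b$ stays an equivalence on representables, are immediate; the content is that finite products commute with filtered colimits in $\Ind(\calC)$, which is what lets one promote the biproduct formula above to all objects. I would isolate this as a small lemma, or simply cite the corresponding statements in \cite[Section 2]{GGN}, after which both directions are routine and strictly parallel to Lemma \ref{pointed}.
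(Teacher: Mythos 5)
Your converse direction (cutting down to $\kappa$-compact objects) is fine and matches what the paper does in the stable case, but the forward direction has a genuine gap: you cannot use $\Ind(\calC')$ here. The citation \cite[Corollary 4.8.1.13]{HA} and the presentability of $\Ind(\calC')$ both require $\calC'$ to admit finite \emph{colimits} (with $\otimes$ preserving them), which is automatic for a stably symmetric monoidal $\infty$-category but fails in general for a pointed or (pre)additive one: such a $\calC'$ is only guaranteed a zero object, resp.\ finite biproducts, and need not have pushouts or cofibers (think of finitely generated projective modules, or of a two-object pointed monoidal category with no coproduct). Since $\Ind(\calC')$ is cocomplete only when the idempotent completion of $\calC'$ has finite colimits, your target category is in general not presentable, and the whole first implication collapses. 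Your supporting computations (filtered colimits commuting with finite products, the biproduct formula $\colim_{i,j}(x_i\oplus y_j)$) are correct but beside the point --- the obstacle you flagged as the ``single genuine obstacle'' is not the real one.

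The paper avoids this by replacing $\Ind$ with the free cocompletion \emph{relative to} the colimits that a pointed, resp.\ (pre)additive, category is actually guaranteed to have: it forms $\calP^{\mathrm{all}}_{\pt}\calC'$, resp.\ $\calP^{\mathrm{all}}_{\Sigma}\calC'$, the universal cocompletion preserving the initial object, resp.\ finite coproducts. These are identified with $\Fun^{*}((\calC')^{\op},\calS)$, resp.\ $\Fun^{\Pi}((\calC')^{\op},\calS)$, by \cite[Section 5.3.6]{HTT}; they are presentable, they inherit a compatible symmetric monoidal structure extending that of $\calC'$ by \cite[Proposition 4.8.1.10]{HA} (which, unlike 4.8.1.13, only asks $\otimes$ to preserve the chosen class of colimits), and they are pointed, resp.\ (pre)additive, by the results of \cite{GGN}. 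If you want to salvage your write-up, substitute this cocompletion for $\Ind$ throughout the forward direction; the rest of your argument, including the converse, then goes through as you describe.
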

\begin{proof}
We can  choose by definition an embedding $\calO^\otimes \to \calC'^\otimes$ where $\calC'^\otimes$ is (pre) additive or pointed symmetric monoidal. Then we form the 
$\infty$-category $\mathcal{P}^{\text{all}}_\Sigma \calC'$ or $\mathcal{P}^{\text{all}}_\pt \calC'$ which is the universal category obtained from $\calC'$ by adding all colimits while keeping finite coproducts (resp. the initial object). It admits a tensor product which extends the tensor product of $\calC'$ (under Yoneda) by \cite[Proposition 4.8.1.10]{HA} and turns it into a presentably symmetric monoidal $\infty$-category. Since the inclusion preserves finite coproducts it suffices to show that the $\infty$-categories $\mathcal{P}^{\text{all}}_\Sigma \calC'$ and $\mathcal{P}^{\text{all}}_\pt \calC'$ are (pre)additive resp. pointed. To see this we use that the underlying $\infty$ categories of  $\mathcal{P}^{\text{all}}_\Sigma \calC'$ and $\mathcal{P}^{\text{all}}_\pt \calC'$
can be described explicitly as the $\infty$-categories of functors $\Fun^\Pi(\calC'^{\op},\calS)$ and $\Fun^*(\calC'^{\op},\calS)$ as shown in \cite[Section 5.3.6]{HTT}. These are in fact (pre)additive resp. pointed as shown in \cite{GGN}.
\end{proof} 

Now we want to define for every $\infty$-operad $\calO^\otimes$ with a terminal object resp. finite products new operads $\calO^\otimes_*$ of pointed objects in $\calO^\otimes$ resp. $\CMon(\calO)^\otimes$ of commutative monoids and
$\CGrp(\calO^\otimes)$ of commutative groups in $\calO^\otimes$. These are the universal pointed resp. (pre)additive $\infty$-operad obtained from $\calO^\otimes$ as we will show eventually. 

First recall  that for an $\infty$-category $\calC$ with a terminal object the category of pointed objects $\calC_*$ is defined to be the full subcategory of the arrow category $\Fun(\Delta^1,\calC)$ consisting of those arrows whose source
is a terminal object. For an $\infty$-category $\calC$ which admits finite products a commutative monoid in $\calC$ is a functor $F:  \N\Fin \to \calC$ which satisfies the Segal condition, that is for every $n \geq 0$
the canonical morphism
$$
(\rho^1_*,...,\rho^n_*): F\langle n \rangle \to F\langle 1 \rangle \times ... \times F\langle 1 \rangle
$$
is an equivalence where $\rho^i: \langle n \rangle \to \langle 1 \rangle$ is the morphism with $\rho^i(i) = 1$ and $\rho^i(k) = 0$ for $k \neq i$. A commutative monoid $F$ in $\calC$ is a commutative group if additionally the morphism
$$
(\rho^1_*, \mu_*): F\langle 2 \rangle \to F\langle 1 \rangle \times F(\langle 1 \rangle
$$
is an equivalence where $\mu: \langle 2 \rangle \to \langle 1 \rangle$ is defined as $\mu(1) = 1$ and $\mu(2) = 2$. The $\infty$-categories $\CMon(\calC)$ and $\CGrp(\calC)$ of commutative monoids and groups in $\calC$ are the full 
subcategories $\CGrp(\calC) \subseteq \CMon(\calC) \subseteq \Fun(\N\Fin, \calC)$ consisting of the group resp. monoid objects. \\
 
The $\infty$-category $\Delta^1$, which is in fact the nerve of a poset, admits a symmetric monoidal structure given by categorical product, which is explicitly given by taking the minimum in the poset. 
If we name the elements by $0$ and $1$ we find that 
$$ 0 \otimes 0 = 0 \qquad 0 \otimes 1 = 0 \qquad 1 \otimes 1 = 1$$
and this already uniquely determines the symmetric monoidal structure. We denote the corresponding symmetric monoidal $\infty$-category by $(\Delta^1)^\otimes \to N\Fin$.
The category $\Fin$ also admits a symmetric monoidal structure given by smash product. We have 
$\langle n \rangle \otimes \langle m \rangle  \simeq \langle nm \rangle. $
 We denote the resulting symmetric monoidal $\infty$-category by $\N \Fin^\otimes \to \N \Fin$.  

\begin{construction}
Let $\calO^\otimes$ be an $\infty$-operad that admits an operadically terminal object. 
We define the simplicial set $\calO_*^\otimes$ over $\N\Fin$ to be the subset of the internal hom
 $\hom_{{/\N\Fin}}\left((\Delta^1)^\otimes, \calO^\otimes\right)$
whose fibre over $\langle n \rangle$ is spanned by those functors which correspond 
under the equivalence 
$$
\hom_{{/\N\Fin}}\left((\Delta^1)^\otimes, \calO^\otimes\right)_{\langle n \rangle} \simeq \Fun\left((\Delta^1)^\otimes_{\langle n \rangle},\calO^\otimes_{\langle n \rangle}\right) 
\simeq \Fun\left(\prod_n \Delta^1, \prod_n \calO\right)
$$
to functors $F_1 \times...\times F_n$ where $F_i: \Delta^1 \to \calO$ lies in $\calO_* \subset \Fun(\Delta^1,\calO)$ i.e. $F_i(0)$ is a terminal object in $\calO$ for every $i$.

Let $\calO^\otimes$ be an $\infty$-operad that admits finite operadic products. 
We define the simplicial sets $\CMon(\calO)^\otimes$ and $\CGrp(\calO)^\otimes$ over $\N\Fin$ to be the full simplicial subset of $\hom_{{/\N\Fin}}\left(\N\Fin^\otimes, \calO^\otimes\right)$
whose fibres over $\langle n \rangle$ are spanned by those functors which correspond 
under the equivalence 
$$
\hom_{{/\N\Fin}}\left((\N\Fin)^\otimes, \calO^\otimes\right)_{\langle n \rangle} \simeq
\Fun\left((\N\Fin)^\otimes_{\langle n \rangle},\calO^\otimes_{\langle n \rangle}\right) 
\simeq \Fun\left(\prod_n \N\Fin, \prod_n \calO\right)$$ 
to functors $F_1 \times...\times F_n$ where $F_i: \N\Fin \to \calO$ lies in $\CMon(\calO)$ resp. $\CGrp(\calO)$.
\end{construction}
Now the exact same argument as in Proposition \ref{pointer_operad} shows that $\calO^\otimes_*$ is an $\infty$-operad which lies in $\Op^*$ and that
$\CMon(\calO)^\otimes$ and $\CGrp(\calO)^\otimes$ are $\infty$-operads in $\Op^\Pi$. We again as in Construction \ref{constrforget} obtain canonical forgetful maps
$$ \calO_*^\otimes \to \calO^\otimes \qquad \text{and} \qquad  \CGrp(\calO)^\otimes \subseteq \CMon(\calO)^\otimes \to \calO^\otimes$$
which are given by evaluation at the commutative algebra object $[1] \in \Delta^1$ and $\langle 1 \rangle \in \N\Fin$.
The analogue of Corollary \ref{corheute} shows that we obtain endofunctors
$\CMon, \CGrp: \Op^\Pi \to \Op^\Pi$  and $(-)_*: \Op^* \to \Op^*$ with transformations $\CGrp \to \CMon \to \id$ and $(-)_* \to \id$.

Now we want to show an analogue of Proposition \ref{propsymm}. This also works very similar to the proof given there, but since there are some changes we want to spell it out.

\begin{proposition}\label{pressymerquicdk}
Assume that $\calC^\otimes$ is a presentably symmetric monoidal $\infty$-category. Then so are $\calC^\otimes_*$, $\CGrp(\calC)^\otimes$ and $\CMon(\calC)^\otimes$. Moreover the operad morphisms 
$$ 
\CGrp(\calC)^\otimes \to \CMon(\calC)^\otimes \to  \calC^\otimes \qquad \text{and} \qquad \calC^\otimes_*  \to \calC
$$
admit operadic left adjoints which are symmetric monoidal.
\end{proposition}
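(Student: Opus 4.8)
The plan is to reproduce the proof of Proposition~\ref{propsymm} almost verbatim, replacing the source $(\psf)^\otimes$ by the small symmetric monoidal category $(\Delta^1)^\otimes$ in the pointed case and by $\N\Fin^\otimes$ in the (pre)additive cases. Since both of these sources are small and symmetric monoidal, Proposition~\ref{propopeee} shows that the Day convolutions $\Fun(\Delta^1,\calC)^\otimes$ and $\Fun(\N\Fin,\calC)^\otimes$ are presentably symmetric monoidal. The units $[1]\in\Delta^1$ and $\langle 1\rangle\in\N\Fin$ are commutative algebra objects and hence determine symmetric monoidal sections $s$; by Corollary~\ref{morphism_contra} the restriction functors $s^*$ landing in $\calC^\otimes$ therefore admit symmetric monoidal operadic left adjoints. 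Exactly as in Construction~\ref{constrforget}, the forgetful maps in the statement are the restrictions of these $s^*$ along the full suboperads $\calC_*^\otimes$, $\CMon(\calC)^\otimes$ and $\CGrp(\calC)^\otimes$. It thus suffices to prove that the inclusions
$$
\calC_*^\otimes\hookrightarrow\Fun(\Delta^1,\calC)^\otimes,\qquad \CGrp(\calC)^\otimes\hookrightarrow\CMon(\calC)^\otimes\hookrightarrow\Fun(\N\Fin,\calC)^\otimes
$$
each admit a symmetric monoidal operadic left adjoint; composing with the $s^*$ above then yields simultaneously the presentable symmetric monoidality of the three operads and the desired left adjoints.

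Each of these inclusions has as its underlying functor a reflective localization of a presentable functor category --- the reduced functors, the Segal objects, and the grouplike Segal objects, respectively (see \cite{GGN}). I would therefore invoke the criterion of Example~\ref{exampleeins}\eqref{pointudrei}: the ambient categories $\Fun(\Delta^1,\calC)^\otimes$ and $\Fun(\N\Fin,\calC)^\otimes$ are presentably symmetric monoidal, hence admit internal mapping objects, and it is enough to check that for $G$ in the subcategory and arbitrary $F$ the internal mapping object $\map(F,G)$ lands again in the subcategory. For this I would use the explicit formula of Proposition~\ref{propinternal},
$$
\map(F,G)(c)\simeq\int_{d}\map_\calC\big(F(d),G(c\otimes d)\big),
$$
where $d$ ranges over the source ($\Delta^1$ with product $\min$, or $\N\Fin$ with smash product) and $\otimes$ denotes the corresponding monoidal product, together with the fact that ends preserve all limits \eqref{limit}.

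The pointed case is immediate: for $c=0$ we have $0\otimes d=0$ in $\Delta^1$, so $G(0\otimes d)=G(0)$ is terminal once $G\in\calC_*$, and an end of terminal objects is terminal, whence $\map(F,G)(0)$ is terminal. The real content, and what I expect to be the main obstacle, is the (pre)additive computation, which requires tracing how the smash product acts on the Segal and shear maps. Writing $\langle n\rangle\otimes\langle m\rangle=\langle nm\rangle$, the map $\rho^i\otimes\id_{\langle m\rangle}$ is the projection onto the $i$-th block $\langle m\rangle$; comparing against the full Segal decomposition of $\langle nm\rangle$ and using two-out-of-three shows that $G(\langle n\rangle\otimes d)\to\prod_i G(d)$ is an equivalence whenever $G$ is a commutative monoid. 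Since ends commute with the finite product $\prod_i$ and internal hom preserves it in the target variable, the induced Segal map for $\map(F,G)$ is an equivalence, so $\map(F,G)\in\CMon(\calC)$.

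For the group condition one computes that under the block decomposition $G(\langle 2\rangle\otimes d)\simeq G(d)\times G(d)$ the pair $(\rho^1\otimes\id,\,\mu\otimes\id)$ becomes precisely the shear map $(x,y)\mapsto(x,x+y)$, since $\mu\otimes\id$ folds the two blocks and thus adds their coordinates; this is an equivalence exactly because $G$ is grouplike, so $\map(F,G)\in\CGrp(\calC)$. This establishes the outer inclusion into $\Fun(\N\Fin,\calC)^\otimes$ for both $\CMon$ and $\CGrp$; the remaining map $\CGrp(\calC)^\otimes\to\CMon(\calC)^\otimes$ then follows because a composite of symmetric monoidal reflective localizations restricts to one (the localization onto grouplike objects factors through $\CMon(\calC)$). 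This completes the argument.
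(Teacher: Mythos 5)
Your proposal is correct and follows essentially the same route as the paper: reduce to the Day convolutions $\Fun(\Delta^1,\calC)^\otimes$ and $\Fun(\N\Fin,\calC)^\otimes$ via Proposition \ref{propopeee} and Corollary \ref{morphism_contra}, then verify compatibility of the reflective localizations with the monoidal structure using the criterion of Example \ref{exampleeins}\eqref{pointudrei} and the end formula of Proposition \ref{propinternal}. Your unwinding of how $\rho^i\otimes\id$ and $\mu\otimes\id$ interact with the Segal and shear conditions is just a more explicit version of the step the paper states in one line, so no substantive difference remains.
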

\begin{proof}
By Proposition \ref{propopeee}, the operads $\Fun(\Delta^1,\calC)^\otimes$ and $\Fun(\N\Fin,\calC)^\otimes$ are presentably symmetric monoidal and by Corollary \ref{morphism_contra} the maps
$$
 \Fun(\Delta^1,\calC)^\otimes \to \calC^\otimes \qquad \text{and} \qquad  \Fun(\N\Fin,\calC)^\otimes \to \calC^\otimes
$$ 
admit operadic left adjoints which are symmetric monoidal. Thus again as in Proposition \ref{propsymm} it suffices to prove that the localizations 
$$
\Fun(\Delta^1,\calC) \to \calC_* \qquad \text{and}  \qquad \Fun(\N\Fin,\calC) \to \CMon(\calC) \to \CGrp(\calC)
$$
of the underlying $\infty$-categories are compatible with the symmetric monoidal structures. 
We want to use
the criterion established in Example \ref{exampleeins} \eqref{pointudrei}. 

Let us start with the case of pointed objects. Let $F,G: \Delta^1 \to \calC$ be functors and assume that $G$ is a pointed object, i.e. $G(0)$ is a terminal object in $\calC$. Then we use Proposition \ref{propinternal} 
to find 
\begin{equation}
\map_{\Fun(\Delta^1,\calC)}(F,G)(0) \simeq  \int_{d \in \Delta^1} \map_\calC(F(d), G(0)) 
\end{equation}
Since $G(0)$ is terminal and ends as well as mapping spaces preserve terminal objects this 
this shows that $\map_{\Fun(\Delta^1,\calC)}(F,G)$ is again a pointed object. 

Now we come to the second case. Let $F,G: \N\Fin \to \calC$ be functors and assume that $G$ is a commutative monoid (group) object. 
We have to show that the internal mapping object $\map_{\Fun(\N\Fin,\calC)}(F,G)$ is also a commutative monoid (group) object. But according to  
Proposition \ref{propinternal} we have 
\begin{equation}\label{endformular}
\map_{\Fun(\N\Fin,\calC)}(F,G)(c) \simeq  \int_{d \in \N\Fin} \map_\calC(F(d), G(c \wedge d))
\end{equation}
For $\langle n \rangle \in \N\Fin$ we have that the canonical map $G\big(\langle n \rangle \wedge d\big) \to G( d)^n$ is an equivalence for every $d$ since $G$ is a commutative monoid object (and the same for a group object and $n =2$ in case of the shear map). The mapping object and the end in \eqref{endformular} commute with products. This implies that $\map(F,G)$ is also a monoid (group) object. 
\end{proof}

From Proposition \ref{pressymerquicdk} we immediately get as in  Corollary \ref{cormor} invoking the uniqueness results from \cite{GGN} that we have equivalences
$$\CGrp(\calS)^\otimes \simeq (\Sp^{\geq 0})^{\otimes} \qquad \text{and}\qquad \calS_*^\otimes \simeq (\calS_*)^{\wedge}$$
where $(\Sp^{\geq 0})^\otimes$ is the symmetric monoidal $\infty$-category of connective spectra with smash product and $(\calS_*)^{\wedge}$ is the symmetric monoidal $\infty$-category of pointed spaces with smash product.

\begin{thm}\label{addequiv}
For every $\infty$-operad $\calO^\otimes$ in $\Op^*$ the $\infty$-operad $\calO_*^\otimes$ is pointed. If $\calO^\otimes$ is pointed then the functor $\calO^\otimes_* \to \calO^\otimes$ is an equivalence of $\infty$-operads.

For every $\infty$-operad in $\Op^\Pi$ the $\infty$-operad $\CGrp(\calO)^\otimes$ is additive and the $\infty$-operad $\CMon(\calO)^\otimes$ is preadditive.
If $\calO^\otimes$ is preadditive then the functor $\CMon(\calO)^\otimes \to \calO^\otimes$ is an equivalence of $\infty$-operads.
If $\calO^\otimes$ is additive then the inclusion $\CGrp(\calO)^\otimes \to \CMon(\calO)^\otimes$ is an equivalence of $\infty$-operads.
\end{thm}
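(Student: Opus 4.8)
The plan is to run the argument in complete parallel to the proof of Theorem \ref{proppointad}, substituting Proposition \ref{pressymerquicdk} for Proposition \ref{propsymm} and the underlying $\infty$-categorical facts about $\calC_*$, $\CMon(\calC)$ and $\CGrp(\calC)$ established in \cite{GGN} for the stability of $\Sp(\calC)$. As there, every assertion is first reduced to the case of a presentably symmetric monoidal $\calC^\otimes$ and then bootstrapped to a general $\calO^\otimes$ via a fully faithful embedding supplied by the characterisation lemma preceding this theorem.

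For the structural claims (that $\calO_*^\otimes$ is pointed, $\CMon(\calO)^\otimes$ is preadditive and $\CGrp(\calO)^\otimes$ is additive) I would first treat the presentable case. If $\calC^\otimes$ is presentably symmetric monoidal then Proposition \ref{pressymerquicdk} shows that each of $\calC_*^\otimes$, $\CMon(\calC)^\otimes$, $\CGrp(\calC)^\otimes$ is again presentably symmetric monoidal, while \cite{GGN} identifies their underlying $\infty$-categories $\calC_*$, $\CMon(\calC)$, $\CGrp(\calC)$ as pointed, preadditive and additive respectively. Since for a symmetric monoidal $\infty$-category being pointed or (pre)additive as an operad coincides with the corresponding underlying property (as recorded in the preceding remark), this settles the presentable case. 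For general $\calO^\otimes$ I would embed it fully faithfully into such a presentable $\calC^\otimes$ with $i(\calO)$ containing the zero object (resp.\ closed under finite products) via the characterisation lemma; the induced map $\calO_*^\otimes \to \calC_*^\otimes$ (and likewise for $\CMon$ and $\CGrp$) is again fully faithful with image closed under the relevant limits, so the same lemma read backwards certifies that $\calO_*^\otimes$ is pointed, $\CMon(\calO)^\otimes$ preadditive and $\CGrp(\calO)^\otimes$ additive.

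For the equivalence assertions I would again reduce to the presentable case. There Proposition \ref{pressymerquicdk} furnishes a symmetric monoidal operadic left adjoint to each of the forgetful maps $\calC_*^\otimes \to \calC^\otimes$, $\CMon(\calC)^\otimes \to \calC^\otimes$ and $\CGrp(\calC)^\otimes \to \CMon(\calC)^\otimes$. By \cite{GGN} the underlying adjoint functors are inverse equivalences whenever the input is already pointed, preadditive, or additive respectively; and since a symmetric monoidal functor between symmetric monoidal $\infty$-categories is an equivalence of $\infty$-operads exactly when its underlying functor of $\infty$-categories is, the left adjoint, and hence the forgetful map itself, is an equivalence of operads. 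To pass to a general pointed (resp.\ preadditive, additive) $\calO^\otimes$ I would form the commutative square relating $\calO^\otimes$ and its presentable envelope exactly as in the last part of the proof of Theorem \ref{proppointad}: the right vertical map is an operadic equivalence, the underlying functor of the left vertical map is an equivalence by \cite{GGN}, and both horizontal maps are fully faithful, which forces the left vertical map to be essentially surjective and operadically fully faithful, hence an equivalence.

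The routine parts, namely the reduction to the presentable case and the end computations, are already packaged inside Proposition \ref{pressymerquicdk}, so the only genuine external inputs are the underlying $\infty$-categorical statements of \cite{GGN}. The one point requiring a little care, and the likely main obstacle, is the commutative-monoid reduction: one must check that the forgetful functor $\CMon(\calO) \to \calO$ (evaluation at $\langle 1 \rangle$) preserves the finite products used to recognise operadic limits, the analogue of the observation in Proposition \ref{pointer_operad} that $\Omega^\infty \colon \Sp(\calO) \to \calO$ preserves limits, so that the fully faithful embedding into the presentable envelope interacts correctly with operadic products and the backwards direction of the characterisation lemma genuinely applies. Everything else transfers essentially verbatim from Section \ref{day} and the stable case.
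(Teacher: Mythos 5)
Your proposal is correct and follows essentially the same route as the paper: the paper's own proof simply says "works as the proof of Theorem \ref{proppointad}", reducing to the presentably symmetric monoidal case via Proposition \ref{pressymerquicdk}, importing the underlying-category statements from \cite{GGN}, and handling the $\CGrp(\calO)^\otimes \to \CMon(\calO)^\otimes$ case by exactly the left-adjoint argument you describe. Your write-up merely makes explicit the details the paper leaves implicit.
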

\begin{proof}
The proof works as the proof of Theorem \ref{proppointad}. The seemingly different case that $\CGrp(\calO)^\otimes \to \CMon(\calO)^\otimes$ is an equivalence works the same: we import the result that it is an underlying equivalence from
\cite{GGN} and then argue that its an operadic equivalence using Proposition \ref{pressymerquicdk} and the left adjoint. Alternatively we show that $\CGrp(\calO)^\otimes \to \calO^\otimes$ is an equivalence and use 2-out-of-3.
\end{proof}

This immediately implies that the endofunctors $\CMon, \CGrp: \Op^\Pi \to \Op^\Pi$ and $(-)_*: \Op^* \to \Op^*$ are colocalization onto the full subcategories of preadditive and 
additive resp. pointed $\infty$-operads. Finally we get the analogue of \ref{corimo}.

\begin{corollary}\label{coradditivesmafwjq}
For every pointed $\infty$-operad $\calO^\otimes$ the induced functor
$$
\Alg^*_\calO\left(\calS_*\right) \to \Alg^*_\calO \left(\calS\right)
$$
is an equivalence of $\infty$-categories. For every preadditive $\infty$-operad $\calO^\otimes$ the induced functor
$$
\Alg^\Pi_\calO\left(\CMon(\calS)\right) \rightarrow \Alg^\Pi_\calO \left(\calS\right)
$$
is an equivalence of $\infty$-categories. For every additive $\infty$-operad $\calO^\otimes$ the induced functor
$$
\Alg^\Pi_\calO\left(\Sp^{\geq 0}\right) \xrightarrow{\Omega^\infty} \Alg^\Pi_\calO \left(\calS\right)
$$
is an equivalence of $\infty$-categories. 
\end{corollary}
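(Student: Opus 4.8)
The plan is to run the proof of Corollary \ref{corimo} three times, once for each of the coreflective endofunctors constructed above. Write $T$ for the relevant functor, so $T=(-)_*$ on $\Op^*$ in the pointed case and $T=\CMon$ or $T=\CGrp$ on $\Op^\Pi$ in the preadditive resp.\ additive case, and let $\calK$ be the matching class of limit diagrams (so that $\Alg^\calK$ reads as $\Alg^*$ or $\Alg^\Pi$). In each case the functor whose invertibility is asserted is post-composition with the forgetful operad map $u_\calS\colon T(\calS)^\otimes\to\calS^\times$, once we identify $T(\calS)^\otimes$ with $(\calS_*)^\wedge$, with $\CMon(\calS)^\otimes$, and with $(\Sp^{\geq0})^\otimes$ respectively; the first and third identifications are the equivalences $\calS_*^\otimes\simeq(\calS_*)^\wedge$ and $\CGrp(\calS)^\otimes\simeq(\Sp^{\geq0})^\otimes$ recorded just before Theorem \ref{addequiv}, while in the preadditive case $T(\calS)^\otimes=\CMon(\calS)^\otimes$ is the target on the nose.

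To build the inverse I would use the functoriality of $T$ furnished by the analogue of Corollary \ref{corheute}. Applying $T$ to an operad map $G\colon\calO^\otimes\to\calS^\times$ produces $T(G)\colon T(\calO)^\otimes\to T(\calS)^\otimes$, hence a functor $\Alg^\calK_\calO(\calS)\to\Alg^\calK_{T(\calO)}(T(\calS))$. Now Theorem \ref{addequiv} supplies, under the hypothesis on $\calO^\otimes$, an equivalence of $\infty$-operads $u_\calO\colon T(\calO)^\otimes\to\calO^\otimes$; in the additive case this is the composite $\CGrp(\calO)^\otimes\to\CMon(\calO)^\otimes\to\calO^\otimes$ of the two equivalences provided by the theorem. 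Precomposition along $u_\calO$ identifies $\Alg^\calK_{T(\calO)}(T(\calS))$ with $\Alg^\calK_\calO(T(\calS))$, and the composite is the candidate inverse $\Alg^\calK_\calO(\calS)\to\Alg^\calK_\calO(T(\calS))$.

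It then remains to check that this candidate is genuinely a two-sided inverse of the forgetful functor — the step that Corollary \ref{corimo} settles with ``it is clear''. One composite is immediate from naturality of the counit $u\colon T\to\id$: the algebra built from $G\colon\calO^\otimes\to\calS^\times$ is $T(G)\circ u_\calO^{-1}$, and forgetting it yields $u_\calS\circ T(G)\circ u_\calO^{-1}\simeq G\circ u_\calO\circ u_\calO^{-1}\simeq G$. For the other composite, an algebra $F\colon\calO^\otimes\to T(\calS)^\otimes$ is sent to $T(u_\calS\circ F)\circ u_\calO^{-1}=T(u_\calS)\circ T(F)\circ u_\calO^{-1}$; here the idempotency of the coreflection gives $T(u_\calS)\simeq u_{T(\calS)}$, whence naturality rewrites the composite as $u_{T(\calS)}\circ T(F)\circ u_\calO^{-1}\simeq F\circ u_\calO\circ u_\calO^{-1}\simeq F$. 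I expect this bookkeeping to be the only genuine content, and it is exactly where the colocalization statement following Theorem \ref{addequiv} is used. Once carried out in one case it transfers verbatim to the other two, the only case-dependent inputs being the identification of $T(\calS)^\otimes$ and the choice of which equivalence from Theorem \ref{addequiv} to feed in.
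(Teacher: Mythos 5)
Your proposal is correct and follows essentially the same route as the paper, which simply declares this corollary to be the analogue of Corollary \ref{corimo}: that proof constructs exactly your candidate inverse via the functoriality of $T$ together with the equivalence $T(\calO)^\otimes \simeq \calO^\otimes$ supplied by Theorem \ref{addequiv}. Your explicit check that the two composites are homotopic to the identities only fills in the step the paper dismisses as clear.
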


\section{Yoneda embeddings and lax monoidal structures}\label{secyoneda}

In this section we want to discuss symmetric monoidal versions of Yoneda's lemma that we obtain in the stable and the additive setting. This also works with the necessary changes in the preadditive and pointed setting, but for 
the sake of readability we restrict to these two cases. First recall from \cite[Section 1.4]{HA} that 
for  a stable $\infty$-category $\calC$ the opposite is also stable and the functor 
\begin{equation}\label{equiv1}
\Fun^{\Lex}(\calC^{op}, \Sp) \xto{\Omega^\infty} \Fun^{\Lex}(\calC^{op}, \calS)
\end{equation}
is an equivalence. Here $\calS$ denotes the $\infty$-category of spaces and $\Sp = \Sp(\calS)$ denotes the $\infty$-category of spectra. For $\calC$ additive it is shown in \cite{GGN} that the functor 
\begin{equation}\label{twoee}
\Fun^{\Pi}(\calC^{op}, \Sp^{\geq 0}) \to \Fun(\calC^{op}, \calS) 
\end{equation}
is an an equivalence where $\Sp^{\geq 0}$ denotes the $\infty$-category of connective spectra. These statements are non-operadic analogues of Corollary \ref{corimo} and Corollary \ref{coradditivesmafwjq} and have already
been used extensively in the last chapters.

\begin{definition}\label{defrep}
Let $\calC$ be a stable $\infty$-category. Then a functor $F: \calC^{op} \to \Sp$ is called representable if $F$ preserves finite limits and if $\Omega^\infty F :\calC^{op} \to \calS$ is
representable in the classical sense. The object $c \in \calC$ representing $\Omega^\infty F$ is called the representing object for $F$. 
Let $\calC$ be an additive $\infty$-category. Then a functor $F: \calC^{op} \to \Sp^{\geq 0}$ is called representable if $F$ preserves finite products and if $\Omega^\infty F :\calC^{op} \to \calS$ is
representable.  
\end{definition}

Recall that the Yoneda embedding defines a functor $j: \calC \to \Fun(\calC^{op},\calS)$ which is an equivalence onto the full subcategory $\Fun^{\rep}(\calC^{op}, \calS) \subseteq \Fun(\calC^{op},\calS)$ of representable
functors. Directly from \eqref{equiv1} we get that for $\calC$ stable  post-composition with $\Omega^\infty$   induces an equivalence 
$
\Fun^{\rep}(\calC^{op}, \Sp) \to \Fun^{\rep}(\calC^{op}, \calS) 
$
where $\Fun^{\rep}(\calC^{op}, \Sp) \subseteq \Fun(\calC^{op}, \Sp)$ is the full subcategory on the representable functors. 
As a consequence we get a fully faithful inclusion $j_{\St}: \calC \to \Fun(\calC^{op}, \Sp)$ which we call the stable Yoneda embedding. In particular we get for every object $c \in \calC$ a representable functor represented by $c$. 
We will denote this functor as $\map_\Sp(-,c)$ and call for $d \in \calC$ the value $\map_\Sp(d,c)$ the mapping spectrum from $d$ to $c$. 
Similar for $\calC$ additive we get using \eqref{twoee} that the functors 
$$
\xymatrix{
\calC \ar[r]^-j & \Fun^{\rep}(\calC^{op}, \calS) &  \Fun^{\rep}(\calC^{op}, \Sp^{\geq 0}) \ar[l]_-{~~ \Omega^\infty}
}
$$
are equivalences. Composing them with the inclusion induces an additive Yoneda embedding $j_\Add: \calC \to \Fun(\calC^{op}, \Sp^{\geq 0})$. We denote the representable functor by $\map_{\Sp^{\geq 0}}(-,c)$ and call 
$\map_{\Sp^{\geq 0}}(d,c)$ the connective mapping spectrum. 

\begin{remark}
For a representable functor $F: \calC^{op} \to \Sp$ (or $\calC^{op} \to \Sp^{\geq 0}$) it follows a posteriori that it preserves all limits and not only finite limits (resp. products). This can be seen using that the analogous statement is true for the respective functor to spaces and limits of spectra can be tested
on the underlying spaces of all shifts. In fact if $\calC$ is presentable and stable (or presentable and additive) then a functor  $F: \calC^{op} \to \Sp$ is representable precisely if it preserves all limits. To see this use the 
equivalences
$$\calC \simeq \Sp(\calC) \simeq \calC \otimes \Sp \simeq \Fun^{\lim}(\calC^{op},\Sp)$$
which follows as explained in \cite[Section 4]{GGN}. 
\end{remark}

\begin{proposition}[Stable/Additive Yoneda lemma]\label{yoneda}
Let $\calC$ be a stable $\infty$-category and $F,G: \calC^{op} \to \Sp$ be functors such that $F$ is representable by $c \in \calC$ and $G$ preserves finite limits. Then we have an equivalence
$$
\Map_{\Fun(\calC^{op},\Sp)}(F,G) \simeq \Omega^\infty G(c)
$$
which is natural in $G$. 
If $\calC$ is additive and $F,G: \calC^{op} \to \Sp^{\geq 0}$ are functors where $F$ is represented by $c \in \calC$ and $G$ preserves finite products then we get a similar equivalence
$$
\Map_{\Fun(\calC^{op},\Sp^{\geq 0})}(F,G) \simeq \Omega^\infty G(c)
$$
\end{proposition}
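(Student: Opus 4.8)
The plan is to reduce the statement to the ordinary ($\infty$-categorical) Yoneda lemma for space-valued presheaves, transporting the mapping space across the equivalence \eqref{equiv1}. First I would record that both functors lie in the full subcategory $\Fun^{\Lex}(\calC^{op}, \Sp) \subseteq \Fun(\calC^{op}, \Sp)$: the functor $G$ preserves finite limits by hypothesis, while $F$ does so because preservation of finite limits is part of the definition of representability (Definition \ref{defrep}). Since $\Fun^{\Lex}$ is a \emph{full} subcategory, the mapping space $\Map_{\Fun(\calC^{op},\Sp)}(F,G)$ agrees with $\Map_{\Fun^{\Lex}(\calC^{op},\Sp)}(F,G)$, so it suffices to compute the latter.

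Next I would invoke \eqref{equiv1}, which asserts that post-composition with $\Omega^\infty$ is an equivalence $\Fun^{\Lex}(\calC^{op},\Sp) \xto{\sim} \Fun^{\Lex}(\calC^{op},\calS)$. Any equivalence of $\infty$-categories induces equivalences on mapping spaces, so
$$
\Map_{\Fun^{\Lex}(\calC^{op},\Sp)}(F,G) \simeq \Map_{\Fun^{\Lex}(\calC^{op},\calS)}(\Omega^\infty F, \Omega^\infty G),
$$
and again these agree with the mapping spaces computed in the ambient categories of all functors. By Definition \ref{defrep}, $F$ being representable by $c$ means precisely that $\Omega^\infty F$ is classically representable with representing object $c$, i.e. $\Omega^\infty F \simeq j(c) = \Map_\calC(-,c)$. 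The Yoneda lemma in $\Fun(\calC^{op},\calS)$ then gives
$$
\Map_{\Fun(\calC^{op},\calS)}\big(j(c), \Omega^\infty G\big) \simeq (\Omega^\infty G)(c).
$$
Composing these equivalences yields $\Map_{\Fun(\calC^{op},\Sp)}(F,G) \simeq \Omega^\infty G(c)$. Naturality in $G$ holds because every step in the chain is natural in the second variable: the inclusions of full subcategories, the equivalence \eqref{equiv1} on mapping spaces, and the Yoneda equivalence, which is natural in the presheaf $\Omega^\infty G$.

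For the additive case I would run the identical argument with $\Sp$ replaced by $\Sp^{\geq 0}$, finite limits replaced by finite products, $\Fun^{\Lex}$ replaced by $\Fun^{\Pi}$, and the equivalence \eqref{twoee} used in place of \eqref{equiv1}; the identification $\Omega^\infty F \simeq j(c)$ and the final Yoneda step are unchanged. I do not expect a serious obstacle here: the argument is essentially formal, and the only points requiring care are the repeated identification of mapping spaces in a full subcategory with those in the ambient functor category, and the bookkeeping of naturality in $G$ through the chain of equivalences.
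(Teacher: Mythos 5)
Your proposal is correct and follows exactly the same route as the paper's own (one-line) proof: transport the mapping space across the equivalence \eqref{equiv1} (resp. \eqref{twoee}) and apply the space-valued Yoneda lemma; your write-up simply makes explicit the fullness and naturality bookkeeping that the paper leaves implicit.
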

\begin{proof}
This follows from \eqref{equiv1} and \eqref{twoee} together with the space valued Yoneda Lemma for $\infty$-categories.
\end{proof}

\begin{remark}
In the setting of Proposition \ref{yoneda} the $\infty$-catgory $\Fun(\calC^{op},\Sp)$ is stable, in particular also admits  mapping spectra. 
One can improve the statement of the proposition to the stronger statement that we have an equivalence of spectra 
$$\map_{\Sp}(F,G) \simeq G(c)$$
To this end consider the  functor $\map_{\Sp}(F,-): \Fun(\calC^{op},\Sp) \to \Sp$. It preserves finite limits 
and the underlying functor  $\Omega^\infty \map_{\Sp}(F,-)$ is given by $\Map(F,-)$. 
But the functor $\mathrm{ev}_c: \Fun(\calC^{op},\Sp) \to \Sp$ which takes $G$ to $G(c)$ also preserves finite limits and according to Proposition \ref{yoneda} 
it has the same underlying functor. Thus the two are equivalent. A similar argument in the additive case shows that $\map_{\Sp^{\geq 0}}(F,G) \simeq G(c)$.
\end{remark}

Now finally we want to discuss symmetric monoidal variants of Yoneda's Lemma. Recall from Corollary \ref{refyoneda} that for a symmetric monoidal $\infty$-category $\calC^\otimes$ the Yoneda embedding 
$j: \calC \to \Fun(\calC^{op}, \calS)$ admits a refinement 
$j': \CAlg(\calC) \to \Fun_{\lax}(\calC^{op}, \calS)$ which we call the symmetric monoidal Yoneda embedding.
Here $\Fun_{\lax}(\calC^{op}, \calS)$ denotes the $\infty$-category of lax symmetric monoidal functors. 
These are by definition operad maps from $(\calC^{op})^\otimes$ with  opposite symmetric monoidal structure to $\calS^\times$. We say that a lax symmetric monoidal functor from $\calC^{op}$
to either of the symmetric monoidal $\infty$-categories of spaces, connective spectra or spectra is representable if the underlying functor is representable in the sense of Definition \ref{defrep}. 

\begin{prop}
For  a symmetric monoidal $\infty$-category $\calC^\otimes$ the functor
$
j': \CAlg(\calC) \to \Fun^{\rep}_{\lax}(\calC^{op}, \calS)
$
is an equivalence  where $\Fun^{\rep}_{\lax}(\calC^{op}, \calS) \subseteq \Fun_\lax(\calC^{op}, \calS)$ is the full subcategory consisting of the representable functors. If $\calC$ is a stably symmetric monoidal $\infty$-category then the functor
$$
\Omega^\infty: \Fun^{\rep}_{\lax}(\calC^{op}, \Sp) \to \Fun^{\rep}_{\lax}(\calC^{op}, \calS) 
$$
induced by post-composition with the operad map $\Omega^\infty: \Sp \to \calS$ is an equivalences of $\infty$-categories. For $\calC$ additively symmetric monoidal the corresponding functor
$$
\Omega^\infty: \Fun^{\rep}_{\lax}(\calC^{op}, \Sp^{\geq 0}) \to \Fun^{\rep}_{\lax}(\calC^{op}, \calS)
$$
induced by post-composition with the operad map $\Omega^\infty: \Sp^{\geq 0} \to \calS$ is an equivalence.
\end{prop}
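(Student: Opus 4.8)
The plan is to handle the three assertions separately, deriving the first from the operadic Yoneda refinement and the other two from Corollary \ref{corimo} and Corollary \ref{coradditivesmafwjq}.

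For the first statement I would start from Corollary \ref{refyoneda}, which already gives that $j'$ is fully faithful, so that only the identification of its essential image with the representable functors remains. If $A \in \CAlg(\calC)$ then the underlying functor of $j'(A)$ is $\map_\calC(-,A)$, which is representable, so $j'$ lands in $\Fun^{\rep}_\lax(\calC^{op},\calS)$. For the converse I would use that by Proposition \ref{glassymyoneda} the symmetric monoidal Yoneda embedding $\calC^\otimes \to \Fun(\calC^{op},\calS)^\otimes$ is a symmetric monoidal, fully faithful map of $\infty$-operads; its essential image is therefore a full suboperad, and it consists exactly of the representable functors, since the Day tensor of $\map_\calC(-,c)$ and $\map_\calC(-,c')$ is $\map_\calC(-,c\otimes c')$. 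For a fully faithful operad map whose image is a full suboperad, the induced functor on commutative algebras is fully faithful (as already noted in Corollary \ref{refyoneda}) and has essential image those algebras whose underlying object lies in the suboperad --- all the structure maps of such an algebra are multi-morphisms between objects of the suboperad and hence already live there. Combined with the equivalence $\CAlg(\Fun(\calC^{op},\calS)) \simeq \Fun_\lax(\calC^{op},\calS)$ of Proposition \ref{propopeee}, this identifies the essential image of $j'$ with the lax symmetric monoidal functors having representable underlying functor, i.e.\ with $\Fun^{\rep}_\lax(\calC^{op},\calS)$.

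For the second and third statements I would first record that $\calC^{op}$ is again stably (resp.\ additively) symmetric monoidal whenever $\calC$ is: the opposite of a stable (resp.\ additive) $\infty$-category is of the same type, and the opposite tensor bifunctor is still exact in each variable, because an exact functor of stable $\infty$-categories preserves finite limits and colimits alike (resp.\ because biproducts coincide with products). Thus $(\calC^{op})^\otimes$ is a stable (resp.\ additive) $\infty$-operad, and Corollary \ref{corimo} (resp.\ Corollary \ref{coradditivesmafwjq}) applied to $\calO^\otimes = (\calC^{op})^\otimes$ yields that
$$\Omega^\infty : \Alg^\Lex_{\calC^{op}}(\Sp) \to \Alg^\Lex_{\calC^{op}}(\calS) \qquad \big(\text{resp. } \Alg^\Pi_{\calC^{op}}(\Sp^{\geq 0}) \to \Alg^\Pi_{\calC^{op}}(\calS)\big)$$
is an equivalence, where the source consists of lax symmetric monoidal functors $\calC^{op} \to \Sp$ whose underlying functor preserves finite limits (resp.\ finite products).

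It then remains to restrict to the representable subcategories. By Definition \ref{defrep} a finite-limit-preserving lax symmetric monoidal functor $F \colon \calC^{op}\to\Sp$ is representable exactly when $\Omega^\infty F$ is classically representable, and a finite-limit-preserving $G \colon \calC^{op}\to\calS$ is representable exactly when it is classically representable; so $\Fun^{\rep}_\lax(\calC^{op},\Sp)$ and $\Fun^{\rep}_\lax(\calC^{op},\calS)$ are the full subcategories of $\Alg^\Lex_{\calC^{op}}(\Sp)$ and $\Alg^\Lex_{\calC^{op}}(\calS)$ carved out by this representability condition. Since the equivalence above sends $F$ to the functor with underlying composite $\Omega^\infty\circ F$, and $F$ is representable precisely when $\Omega^\infty F$ is, it matches these subcategories up exactly and hence restricts to the asserted equivalence; the additive case is identical after replacing $\Sp$, $\Alg^\Lex$ and finite limits by $\Sp^{\geq 0}$, $\Alg^\Pi$ and finite products. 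I expect the suboperad computation in the first paragraph to be the main obstacle, as it is the only place where the monoidal structure genuinely interacts with representability; once it is in place, the remaining statements follow formally from the previously established operadic equivalences, the only extra input being that representability of a finite-limit- (resp.\ finite-product-) preserving functor into spectra is detected by $\Omega^\infty$.
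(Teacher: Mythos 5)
Your proposal is correct and takes essentially the same route as the paper, whose proof simply cites Corollary \ref{refyoneda} for the first claim and Corollaries \ref{corimo} and \ref{coradditivesmafwjq} for the other two. You have merely made explicit the steps the paper treats as immediate: the identification of the essential image of $j'$ via the full suboperad spanned by representables, and the observation that the equivalences of those corollaries (applied to $(\calC^{op})^\otimes$) restrict to the representable subcategories because representability is detected by $\Omega^\infty$.
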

\begin{proof}
The first equivalence is immediate from Corollary \ref{refyoneda}. The second and third follow from Corollary \ref{corimo} and Corollary \ref{coradditivesmafwjq}.
\end{proof}

As a consequence we get a stable, symmetric monoidal Yoneda embedding $j_{\St}': \CAlg(\calC) \to \Fun_\lax(\calC^{op}, \Sp)$ for $\calC$ stably symmetric monoidal 
and an additive, symmetric monoidal Yoneda embedding $j_{\Add}': \CAlg(\calC) \to \Fun_\lax(\calC^{op}, \Sp^{\geq 0})$ for $\calC$ additively symmetric monoidal. As a first immediate, but certainly well known consequence we get the following result, where a commutative coalgebra 
in $\calC$ is by definition a commutative algebra in $\calC^{op}$.

\begin{corollary}
Let $\calC^\otimes$ be stably symmetric monoidal (additively symmetric monoidal). If $c \in \calC$ is a commutative coalgebra and $a \in \calC$ is a commutative algebra then the mapping spectrum $\map_{\Sp}(c,a)$ 
(resp. $\map_{\Sp^{\geq 0}}(c,a)$) admits the structure of an $\mathbb{E}_\infty$-ring spectrum. 
This structure is natural in coalgebra maps in $c$ and algebra maps in $a$.
\end{corollary}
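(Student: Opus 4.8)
The plan is to read off the ring structure directly from the lax symmetric monoidal refinement of the representable functor furnished by the preceding proposition. For $a \in \CAlg(\calC)$ the stable symmetric monoidal Yoneda embedding gives a lax symmetric monoidal functor $j_{\St}'(a)\colon \calC^{op} \to \Sp$, i.e.\ an operad map $(\calC^{op})^\otimes \to \Sp^\otimes$ (with $(\calC^{op})^\otimes$ the opposite symmetric monoidal structure), whose underlying functor is the stable Yoneda functor $\map_{\Sp}(-,a)$. I would treat the stable case in detail; the additive case is word-for-word the same using $j_{\Add}'(a)\colon \calC^{op} \to \Sp^{\geq 0}$ in place of $j_{\St}'(a)$.

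The key point is that an operad map $F\colon \mathcal{D}^\otimes \to \mathcal{E}^\otimes$ induces a functor $\CAlg(\mathcal{D}) \to \CAlg(\mathcal{E})$, obtained by composing an algebra $\N\Fin \to \mathcal{D}^\otimes$ with $F$. Applying this to $F = j_{\St}'(a)$ with $\mathcal{D} = \calC^{op}$ and $\mathcal{E} = \Sp$ produces a functor $\CAlg(\calC^{op}) \to \CAlg(\Sp)$. Since by definition a commutative coalgebra in $\calC$ is precisely a commutative algebra in $\calC^{op}$, the coalgebra $c$ is an object of $\CAlg(\calC^{op})$, and its image is a commutative algebra in $\Sp$ --- an $\mathbb{E}_\infty$-ring spectrum. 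Unwinding, this algebra has underlying object $j_{\St}'(a)$ evaluated at the underlying object of $c$, namely $\map_{\Sp}(c,a)$, which is the asserted structure.

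For the naturality I would package everything into one bifunctor instead of arguing pointwise. Composition of operad maps is functorial, so it yields a functor $\CAlg(\calC^{op}) \times \Alg_{\calC^{op}}(\Sp) \to \CAlg(\Sp)$ sending a pair consisting of an algebra $\N\Fin \to (\calC^{op})^\otimes$ and an operad map $(\calC^{op})^\otimes \to \Sp^\otimes$ to their composite. Precomposing the second factor with $j_{\St}'\colon \CAlg(\calC) \to \Alg_{\calC^{op}}(\Sp)$ gives
$$
\CAlg(\calC^{op}) \times \CAlg(\calC) \to \CAlg(\Sp),
$$
whose value on $(c,a)$ is the $\mathbb{E}_\infty$-ring built above, with underlying spectrum $\map_{\Sp}(c,a)$. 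As morphisms in $\CAlg(\calC^{op})$ are exactly maps of commutative coalgebras in $\calC$, this bifunctor records naturality in both variables.

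The hard part is not conceptual but a matter of variance and of keeping the symmetric monoidal structures straight: one must use throughout the opposite structure $(\calC^{op})^\otimes$ that defines $\Fun_\lax(\calC^{op},-)$, and observe that a coalgebra map in $\calC$ is a morphism in $\CAlg(\calC^{op})$ in the contravariant sense matching the contravariance of $\map_{\Sp}(-,a)$. Since the substantive input --- that $\map_{\Sp}(-,a)$ admits a lax symmetric monoidal refinement at all --- is already established, beyond this bookkeeping the argument is formal.
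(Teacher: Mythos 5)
Your argument is exactly the one the paper intends: the corollary is stated as an immediate consequence of the stable/additive symmetric monoidal Yoneda embedding $j_{\St}'$ (resp.\ $j_{\Add}'$), and your mechanism --- a lax symmetric monoidal functor $\map_{\Sp}(-,a)\colon (\calC^{op})^\otimes \to \Sp^\otimes$ carries the commutative algebra $c \in \CAlg(\calC^{op})$ to an $\mathbb{E}_\infty$-ring spectrum --- is precisely how that consequence is drawn. Your bifunctorial packaging of the naturality in $c$ and $a$ is a correct and slightly more explicit rendering of what the paper leaves implicit.
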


\begin{corollary}
Let $\calC^\otimes$ be stably symmetric monoidal and $F: \calC^{op} \to \Sp$ a representable functor  or let $\calC^\otimes$ be additive and 
$F: \calC^{op} \to \Sp^{\geq 0}$ be representable. Denote the representing object by $c \in \calC$. Then there is a homotopy equivalence between the space of lax symmetric monoidal structures on $F$ and the space of 
commutative algebra structures on $c$.  
\end{corollary}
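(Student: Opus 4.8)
The plan is to deduce the corollary from the two Yoneda-type equivalences established just above, by comparing the homotopy fibers of the relevant forgetful functors. I treat the stably symmetric monoidal case; the additive case is verbatim the same with $\Sp$ replaced by $\Sp^{\geq 0}$ and with the additive half of the preceding Proposition used in place of the stable half. The first step is to record that the stable symmetric monoidal Yoneda embedding $j_{\St}'\colon \CAlg(\calC) \to \Fun_\lax(\calC^{op},\Sp)$ refines the stable Yoneda embedding $j_{\St}\colon \calC \to \Fun(\calC^{op},\Sp)$, in the sense that the square
$$
\xymatrix{
\CAlg(\calC) \ar[r]^{j_{\St}'} \ar[d] & \Fun_\lax(\calC^{op},\Sp) \ar[d] \\
\calC \ar[r]^{j_{\St}} & \Fun(\calC^{op},\Sp)
}
$$
commutes, where the vertical maps are the forgetful functors $\CAlg(\calC) \to \calC$ and $\Fun_\lax(\calC^{op},\Sp) \to \Fun(\calC^{op},\Sp)$. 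This compatibility is the content of Corollary \ref{refyoneda} together with the construction of $j_{\St}'$: the latter is obtained from $j'$ by postcomposing with the inverse of $\Omega^\infty$, which is an operad map and hence commutes with the forgetful functors on both sides.

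Next I would pass to maximal sub-$\infty$-groupoids (cores). By the preceding Proposition the top arrow is an equivalence onto $\Fun^{\rep}_\lax(\calC^{op},\Sp)$, and by the discussion preceding Proposition \ref{yoneda} the bottom arrow is an equivalence onto $\Fun^{\rep}(\calC^{op},\Sp)$. After taking cores both horizontal maps therefore become equivalences of spaces, and the commuting square identifies the two vertical maps of spaces with one another. Consequently their homotopy fibers over corresponding vertices agree: the fiber over $c \in \calC$ on the left is equivalent to the fiber over $F = j_{\St}(c)$ on the right.

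It then remains to identify the two fibers with the spaces in the statement. The homotopy fiber of $\CAlg(\calC)^\simeq \to \calC^\simeq$ over $c$ is by definition the space of commutative algebra structures on $c$. For the right-hand side I would use that representability of a lax symmetric monoidal functor is a condition on its underlying functor alone; hence the square relating $\Fun^{\rep}_\lax \hookrightarrow \Fun_\lax$ to $\Fun^{\rep} \hookrightarrow \Fun$ is a pullback, so restricting the forgetful functor to representables does not change its fiber over the representable object $F$. That fiber is precisely the space of lax symmetric monoidal structures on $F$, and combining the two identifications yields the asserted homotopy equivalence.

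I expect the main obstacle to be organizational rather than computational: making precise, with coherent higher homotopies, that $j_{\St}'$ refines $j_{\St}$ compatibly with the forgetful functors so that the square genuinely commutes, and keeping careful track of cores and homotopy fibers throughout. Once the square is in place, no nontrivial calculation is involved.
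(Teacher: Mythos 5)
Your proof is correct and is essentially the paper's own argument: both identify the two spaces as fibers of the forgetful functors $\CAlg(\calC)\to\calC$ and $\Fun_\lax(\calC^{op},\Sp)\to\Fun(\calC^{op},\Sp)$, use that representability is a condition on the underlying functor so the restriction to representables is a pullback, and then transport fibers across the Yoneda equivalences. The only cosmetic difference is that the paper pastes three pullback squares in $\Cat$ (factoring through $\Fun_\lax^{\rep}(\calC^{op},\calS)$ and using conservativity of the forgetful functor to see the fibers are groupoids), whereas you compose into a single square and pass to cores first.
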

\begin{proof}
The space  $\CAlg(c)$ of commutative algebra structures on $c \in \calC$ is defined as the pullback
$$
\xymatrix{
\CAlg(c) \ar[d]\ar[r] & \CAlg(\calC) \ar[d] \\
\Delta^0 \ar[r]^{c} & \calC
}
$$
in $\Cat$ where the right vertical functor is the forgetful functor. This functor reflects equivalences, which implies that $\CAlg(c)$ is an $\infty$-groupoid. The space of lax symmetric monoidal structures on $F: 
\calC^{op} \to \Sp$ is similarly the fibre of $\Fun_\lax(\calC^{op},\Sp) \to \Fun(\calC^{op},\Sp)$ over $F$.
Now consider the following commuting diagram
$$
\xymatrix{
\CAlg(\calC) \ar[d]\ar[r]^-{j'} & \Fun^{\rep}_\lax(\calC^{op}, \calS) \ar[d] & \Fun^{\rep}_\lax(\calC^{op}, \Sp) \ar[l]_-{\Omega^\infty}\ar[r]\ar[d] & \Fun_\lax(\calC^{op}, \Sp)\ar[d] \\
\calC \ar[r]^-j & \Fun^\rep(\calC^{op},\calS) & \Fun^{\rep}(\calC^{op}, \Sp) \ar[l]_-{\Omega^\infty}\ar[r] & \Fun(\calC^{op}, \Sp)
}
$$
in which the vertical morphisms are the forgetful functors. The right hand square is a pullback by definition. The other two are pullbacks since all occurring horizontal morphisms are equivalences. 
Thus by pasting of pullbacks the fibres are homotopy equivalent.
\end{proof}

\begin{corollary}\label{cor}
Let $\calC$ be a symmetric monoidal $\infty$-category with unit object $\mathbbm{1} \in \calC$. 
\begin{enumerate}
\item
The functor $\Map(-,\mathbbm{1}): \calC^{op} \to \calS$ admits a canonical lax symmetric monoidal refinement. With this refinement it 
is initial in $\Fun_\lax(\calC^{op}, \calS)$. 
\item
If $\calC$ is additively symmetric monoidal then the functor $\map_{\Sp^{\geq 0}}(-,\mathbbm{1}): \calC^{op} \to \Sp^{\geq 0}$ admits a canonical lax symmetric monoidal 
refinement and with this refinement it 
is initial in $\Fun_\lax^\Pi(\calC^{op}, \Sp^{\geq 0})$.
\item
If $\calC$ is stably symmetric monoidal then the functor $\map_{\Sp}(-,\mathbbm{1}): \calC^{op} \to \Sp$ admits a canonical lax symmetric monoidal refinement and with this refinement it 
is initial in $\Fun_\lax^\Lex(\calC^{op}, \Sp)$. 
\end{enumerate}
\end{corollary}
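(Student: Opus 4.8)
The plan is to deduce all three parts from the single fact that the monoidal unit $\mathbbm{1}$ is the initial object of $\CAlg(\calC)$ (the unit of any symmetric monoidal $\infty$-category is the initial commutative algebra, see \cite{HA}), together with the equivalences recorded just above the statement. I treat part (1) as the base case; parts (2) and (3) are then obtained by transporting initiality along the stabilization equivalences of Corollaries \ref{coradditivesmafwjq} and \ref{corimo}.

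For part (1) the key identification is $\Fun_{\lax}(\calC^{op},\calS) \simeq \CAlg\big(\Fun(\calC^{op},\calS)\big)$ from Proposition \ref{propopeee}, where the right-hand side carries the Day convolution structure. The initial object of $\CAlg(\calD)$ for any symmetric monoidal $\infty$-category $\calD$ is the tensor unit $\mathbbm{1}_{\calD}$ equipped with its canonical algebra structure, so I must identify the Day convolution unit of $\Fun(\calC^{op},\calS)$. By Glasman's symmetric monoidal Yoneda embedding (Proposition \ref{glassymyoneda}) the functor $j\colon \calC^\otimes \to \Fun(\calC^{op},\calS)^\otimes$ is symmetric monoidal, hence carries $\mathbbm{1}_{\calC}$ to the Day unit; thus the Day unit is the representable functor $j(\mathbbm{1}) = \Map(-,\mathbbm{1})$, and its canonical algebra structure is exactly the refinement $j'(\mathbbm{1})$ produced by the symmetric monoidal Yoneda embedding of Corollary \ref{refyoneda}, with $\mathbbm{1}\in\CAlg(\calC)$ the initial algebra. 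This supplies the canonical lax refinement and shows it is initial in all of $\Fun_{\lax}(\calC^{op},\calS)$.

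For (2) and (3) I would first observe that $\calC^{op}$ again carries an additive (resp. stably) symmetric monoidal structure, since the opposite of an additive (resp. stable) $\infty$-category is of the same type and the opposite tensor bifunctor is still exact, or biadditive, in each variable; hence $(\calC^{op})^\otimes$ is an additive (resp. stable) $\infty$-operad and Corollary \ref{coradditivesmafwjq} (resp. Corollary \ref{corimo}) applies, giving an equivalence $\Fun^{\Pi}_{\lax}(\calC^{op},\Sp^{\geq 0}) \xto{\sim} \Fun^{\Pi}_{\lax}(\calC^{op},\calS)$ (resp. $\Fun^{\Lex}_{\lax}(\calC^{op},\Sp) \xto{\sim} \Fun^{\Lex}_{\lax}(\calC^{op},\calS)$) by post-composition with $\Omega^\infty$. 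Since $\Map(-,\mathbbm{1})$ is representable it preserves all limits, in particular finite products (resp. finite limits), so it lies in the full subcategory $\Fun^{\Pi}_{\lax}$ (resp. $\Fun^{\Lex}_{\lax}$) of $\Fun_{\lax}$; being initial in the ambient category by (1), it is a fortiori initial in this full subcategory. Transporting along the equivalence above, the initial object of the $\Sp^{\geq 0}$- (resp. $\Sp$-) side is the essentially unique lax functor whose image under $\Omega^\infty$ is $\Map(-,\mathbbm{1})$. Passing to underlying functors and using the non-lax equivalence \eqref{twoee} (resp. \eqref{equiv1}) identifies this underlying functor with the representable $\map_{\Sp^{\geq 0}}(-,\mathbbm{1})$ (resp. $\map_{\Sp}(-,\mathbbm{1})$), so this functor indeed carries the required initial lax structure, namely $j'_{\Add}(\mathbbm{1})$ (resp. $j'_{\St}(\mathbbm{1})$).

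The main obstacle is precisely the base case (1): one cannot simply transport initiality of $\mathbbm{1}$ along $j'$, because $j'$ is only an equivalence onto the full subcategory $\Fun^{\rep}_{\lax}(\calC^{op},\calS)$ of representable functors, and an initial object of a full subcategory need not be initial in the ambient category. The real content is that the Day convolution unit happens to be representable, which is exactly what Glasman's symmetric monoidal Yoneda lemma provides; this is what upgrades initiality among representable lax functors to initiality among all lax functors. Once (1) holds in the full category, the remaining passages are routine, since representable functors automatically preserve finite products and finite limits, and the stabilization equivalences preserve initial objects.
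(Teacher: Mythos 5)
Your proof is correct and follows essentially the same route as the paper: reduce (2) and (3) to (1) via the stabilization equivalences, then prove (1) by using the symmetric monoidal Yoneda embedding to identify $\Map(-,\mathbbm{1})$ with the Day convolution unit, which is the initial object of $\CAlg\big(\Fun(\calC^{op},\calS)\big) \simeq \Fun_\lax(\calC^{op},\calS)$. Your closing remark correctly isolates the one subtlety (initiality in the representable subcategory does not a priori propagate to the whole category), which the paper's argument also handles by going through the Day unit rather than through $j'$ alone.
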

\begin{proof}
In view of the equivalences $\Fun^\Lex_\lax(\calC^{op}, \Sp) \to \Fun^\Lex(\calC^{op}, \calS)$ for $\calC$ stable and $\Fun^\Pi_\lax(\calC^{op}, \Sp^{\geq 0}) \to \Fun^\Pi(\calC^{op}, \calS)$ for $\calC$ additive it suffices to prove the first claim. 

Since the Yoneda embedding $\calC \to \Fun(\calC^{op}, \calS)$ admits a symmetric monoidal structure (Proposition \ref{glassymyoneda}) it sends the tensor unit $\mathbbm{1}$ to the tensor unit. Thus the functor $\map(-,\mathbbm{1})$ is the tensor unit in $\Fun(\calC^{op},\calS)^\otimes$. 
But the tensor unit in
any symmetric monoidal $\infty$-category admits a canonical commutative algebra structure that makes it initial in the category of commutative algebras \cite[Setion 3.2]{HA}. This  together with the equivalence 
$\CAlg(\Fun(\calC^{op},\calS)) \simeq \Fun_\lax(\calC^{op}, \calS)$ from Proposition \ref{propopeee} implies the claim.
\end{proof}

This last corollary will be important in a future paper of the author with Markus Land where it is used to construct, among other things, an $\mathbb{E}_\infty$-map of ring spectra from $\mathrm{ku}$ to $l\mathbb{C}$. Here $ku$ is connective, complex $k$-theory and 
$l\mathbb{C}$ is the connective, symmetric $L$-theory spectrum of the complex numbers with complex conjugation as involution. The next corollary will be used in joint work of the author with Peter Scholze about the cyclotomic trace. 
\begin{corollary}\label{corstableyoned}
\begin{enumerate}
\item
The identity functor $\Sp \to \Sp$ is initial in $\Fun_\lax^\Lex(\Sp,\Sp)$. 
\item
The functor $\Omega^\infty: \Sp \to \calS$ is initial in $\Fun_\lax(\Sp, \calS)$.
\item
The identity functor $\calS \to \calS$ is initial in $\Fun_\lax(\calS,\calS)$.
\item
The
 functor $\Sigma^\infty_+: \calS \to \Sp$ is initial in $\Fun_\lax(\calS, \Sp)$.
\end{enumerate}
\end{corollary}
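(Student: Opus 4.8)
The plan is to deduce clauses (1)--(3) directly from Corollary \ref{cor} by specialising that result to the opposite categories $\Sp^{\op}$ and $\calS^{\op}$, and then to bootstrap clause (4) out of clause (3) using the operadic adjunction $\Sigma^\infty_+ \dashv \Omega^\infty$ of Proposition \ref{propsymm}. The one point requiring genuine care along the way is that the unit--representable functor produced by Corollary \ref{cor} on an opposite category really does coincide, under the self-duality dictionary, with the concrete functor named in each clause.

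First I would treat (1)--(3), using that the opposite of a (stably) symmetric monoidal $\infty$-category is again (stably) symmetric monoidal with the \emph{same} unit object. Applying Corollary \ref{cor}(1) to $\calC = \calS^{\op}$, whose unit is the point $\mathbbm{1} = \ast$, the functor $\Map_{\calS^{\op}}(-,\ast)\colon (\calS^{\op})^{\op} = \calS \to \calS$ sends $X$ to $\Map_{\calS^{\op}}(X,\ast) \simeq \Map_{\calS}(\ast, X) \simeq X$, so it is the identity, and Corollary \ref{cor}(1) asserts exactly that it is initial in $\Fun_\lax(\calS,\calS)$, giving (3). Applying Corollary \ref{cor}(1) instead to $\calC = \Sp^{\op}$, whose unit is the sphere $\bbS$, the same computation gives $\Map_{\Sp^{\op}}(-,\bbS) \simeq \Map_{\Sp}(\bbS,-) \simeq \Omega^\infty$, which is (2). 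For (1) I would apply Corollary \ref{cor}(3) to $\calC = \Sp^{\op}$, which is stably symmetric monoidal; here the spectral enrichment of the opposite category identifies $\map_{\Sp}(-,\bbS)\colon \Sp \to \Sp$ with $X \mapsto \map_{\Sp}(\bbS, X) \simeq X$, the identity, and initiality in $\Fun_\lax^\Lex(\Sp,\Sp)$ is precisely (1). The careful step is to check that the mapping-spectrum enrichment defining $\map_{\Sp}(-,\bbS)$ on $\Sp^{\op}$ recovers the internal mapping spectrum $\map_{\Sp}(\bbS,-)$, and hence the \emph{literal} identity functor, rather than merely something with the correct underlying space-valued functor.

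It remains to deduce (4), which cannot come from Corollary \ref{cor} directly, since $\Sigma^\infty_+$ does not preserve finite limits and so is not representable. Instead I would use that $\Sigma^\infty_+\colon \calS^\times \to \Sp^\otimes$ is operadically left adjoint to $\Omega^\infty\colon \Sp^\otimes \to \calS^\times$ (Proposition \ref{propsymm}), with $\Sigma^\infty_+$ symmetric monoidal and $\Omega^\infty$ lax. Post-composition then induces an adjunction
$$
(\Sigma^\infty_+)_*\colon \Fun_\lax(\calS,\calS) \leftrightarrows \Fun_\lax(\calS,\Sp) \colon (\Omega^\infty)_*,
$$
obtained by post-composing the unit and counit of the operadic adjunction. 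Since $(\Sigma^\infty_+)_*(\id_\calS) \simeq \Sigma^\infty_+$, for every lax symmetric monoidal $G\colon \calS \to \Sp$ we get
$$
\Map_{\Fun_\lax(\calS,\Sp)}(\Sigma^\infty_+, G) \simeq \Map_{\Fun_\lax(\calS,\calS)}(\id_\calS, \Omega^\infty \circ G),
$$
and the right-hand side is contractible by (3); hence $\Sigma^\infty_+$ is initial in $\Fun_\lax(\calS,\Sp)$. I expect the main obstacle to be the justification that post-composition with an operadic adjunction yields an honest adjunction between the $\infty$-categories of lax symmetric monoidal functors (equivalently, between the $\Alg_{\calS}(-)$-categories). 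This is where one must invoke the functoriality of $\Alg_{\calS}(-)$ together with the relative-adjunction formalism of \cite[Section 7.3.2]{HA}, verifying that the triangle identities survive post-composition.
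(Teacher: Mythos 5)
Your treatment of (1)--(3) is exactly the paper's: apply Corollary \ref{cor} to $\Sp^{\op}$ and $\calS^{\op}$ and identify the unit-representable functors with $\id_\Sp \simeq \map_\Sp(\mathbbm{1},-)$, $\Omega^\infty \simeq \Map(\mathbbm{1},-)$ and $\id_\calS \simeq \Map(\mathbbm{1},-)$. For (4) you take a genuinely different route. The paper does not pass through an adjunction at all: it observes that post-composition with $\Sigma^\infty_+$ gives a \emph{symmetric monoidal} functor $\Fun(\calS,\calS)^\otimes \to \Fun(\calS,\Sp)^\otimes$ of Day convolution structures (Corollary \ref{morphism_co}), so it carries the tensor unit $\id_\calS$ to the tensor unit, i.e.\ $\Sigma^\infty_+$ is the unit of $\Fun(\calS,\Sp)^\otimes$; initiality then follows from $\CAlg(\Fun(\calS,\Sp)) \simeq \Fun_\lax(\calS,\Sp)$ (Proposition \ref{propopeee}) and the fact that the tensor unit is the initial $\mathbb{E}_\infty$-algebra. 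Your argument instead transfers initiality from clause (3) along an adjunction $(\Sigma^\infty_+)_* \dashv (\Omega^\infty)_*$ on the $\Alg_{\calS}(-)$-categories induced by the operadic adjunction of Proposition \ref{propsymm}. This is correct, but it rests on a lemma the paper never states or proves, namely that an operadic adjunction induces an honest adjunction between $\Alg_{\calP}(\calO) \leftrightarrows \Alg_{\calP}(\calO')$: one must check that the counit, not just the unit, can be taken over $\N\Fin$, and that unit and counit whisker within the full subcategories of operad maps so that the triangle identities descend. This follows from \cite[Section 7.3.2]{HA} but is real additional work; the paper's route uses only machinery already established (Corollary \ref{morphism_co} and Proposition \ref{propopeee}) and is correspondingly shorter. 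On the other hand your argument makes the reduction of (4) to (3) conceptually transparent and would apply verbatim to any operadic adjunction whose right adjoint's source satisfies the analogue of (3), whereas the paper's argument is tied to recognising the Day convolution unit.
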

\begin{proof}
The first two statements follow immediately by applying Corollary \ref{cor} to the dual category $\Sp^\mathrm{op}$ since 
$$\id_\Sp(-) \simeq \map_\Sp(\mathbbm{1}, -) \qquad \text{and} \qquad \Omega^\infty(-) \simeq \Map(\mathbbm{1}, -) 
$$
For the third statement we apply the first part of Corollary \ref{cor} to the dual of the category of spaces and use that $\id_\calS(-) \simeq \Map(\mathbbm{1},-)$. 
For the last statement   we use that the functor $\Sigma^\infty_+$ is the tensor unit in the Day convolution symmetric monoidal $\infty$-category 
$\Fun(\calS, \Sp)^\otimes$. This follows since the functor 
$$
\Fun(\calS, \calS)^\otimes \to \Fun(\calS, \Sp)^\otimes
$$
given by post-compositon with $\Sigma^\infty_+$ is symmetric monoidal according to Corollary \ref{morphism_co} and since the identity is the unit in the source category. Finally we use Proposition \ref{propopeee} to deduce that $\CAlg(\Fun(\calS,\Sp)) \simeq \Fun_\lax(\calS, \Sp)$. Then the statement is implied by the fact that the tensor unit is the initial $\mathbb{E}_\infty$-algebra.
\end{proof}

\begin{remark}
Note that  Corollary \ref{corstableyoned} is not true for an arbitrary stably symmetric monoidal $\infty$-category, in particular not for the category of chain complexes. It should be considered as a  fundamental property of the symmetric monoidal $\infty$-category of spectra. There is however an  analogous statement for chain complexes but it is about $H\mathbb{Z}$-linear functors (e.g. dg-functors) and not just exact functors. But since being $H\mathbb{Z}$-linear for functors is extra structure and not only a property the statement is not as close as useful as the one here.
\end{remark}

\begin{remark}
One can get versions of the results of this section for other operads $\calO^\otimes$ than $\mathbb{E}_\infty$, e.g. $\calO^\otimes = \mathbb{E}_n^\otimes$. In this case the $\infty$-category of lax symmetric monoidal functors has to be replaced by the category of $\calO$-symmetric monoidal functors which are $\calO$-algebras in the Day convolution structure on the functor category. The theory and the proofs in this case work mutatis mutandis the same but we will not need this extra generality. \end{remark}

\bibliographystyle{amsalpha}
\bibliography{Additive}

\end{document}